\DeclareMathOperator{\GL}{GL}
\renewcommand{\phi}{\varphi}
\renewcommand{\emptyset}{\varnothing}
\newcommand{\Aut}{{\rm Aut}}  
\newcommand{\Inn}{{\rm Inn}} 
\newcommand{\PC}{{\rm Pc}}
\newcommand{\Z}{\ensuremath{\mathbb{Z}}}
\def\theenumi{\@roman\c@enumi}
\theoremstyle{plain}
\newtheorem*{PropositionA}{Proposition A}
\newtheorem*{TheoremB}{Theorem B}
\newtheorem*{CorollaryC}{Corollary C}
\newtheorem*{CorollaryD}{Corollary D}
\newtheorem*{PropositionE}{Proposition E}
\newtheorem*{CorollaryF}{Corollary F}
\newtheorem*{TheoremG}{Theorem G}
\newtheorem*{CorollaryH}{Corollary H}
\newtheorem*{CorollaryI}{Corollary I}
\newtheorem*{CorollaryJ}{Corollary J}
\newtheorem*{Question}{Question}
\newtheorem{theorem}[subsection]{Theorem}
\newtheorem{conjecture}[subsection]{Conjecture}
\newtheorem{lemma}[subsection]{Lemma}
\newtheorem{corollary}[subsection]{Corollary}
\newtheorem{proposition}[subsection]{Proposition}
\theoremstyle{definition}
\newtheorem{definition}[subsection]{Definition}
\newtheorem{question}[subsection]{Question}
\newtheorem{remark}[subsection]{Remark}
\title[]{On normal subgroups in automorphism groups}
\author{Philip M\"oller and Olga Varghese}
\date{\today}
\address{Philip M\"oller\\
	Department of Mathematics\\
	University of M\"unster\\ 
	Einsteinstra\ss e 62\\
	48149 M\"unster, Germany.}
\email{philip.moeller@uni-muenster.de}
\address{Olga Varghese\\ Institute of Mathematics, Heinrich-Heine-University Düsseldorf, Universitätsstra{\upshape{\ss}}e 1, 40225, Düsseldorf, Germany.}
\email{olga.varghese@hhu.de}
\begin{document}
\begin{abstract} 
We describe the structure of virtually solvable normal subgroups in the automorphism group of a right-angled Artin group ${\rm Aut}(A_\Gamma)$. In particular,  we prove that a finite normal subgroup in ${\rm Aut}(A_\Gamma)$ has at most order two and if $\Gamma$ is not a clique, then any finite normal subgroup in ${\rm Aut}(A_\Gamma)$ is trivial. This property has implications to automatic continuity and to $C^\ast$-algebras: every algebraic epimorphism $\varphi\colon L\twoheadrightarrow{\rm Aut}(A_\Gamma)$ from a locally compact Hausdorff group $L$ is continuous if and only if $A_\Gamma$ is not isomorphic to $\mathbb{Z}^n$ for any $n\geq 1$. Further, if $\Gamma$ is not a join and contains at least two vertices, then the set of invertible elements is dense in the reduced group $C^\ast$-algebra of $\Aut(A_\Gamma)$. We obtain similar results for ${\rm Aut}(G_\Gamma)$ where $G_\Gamma$ is a graph product of cyclic groups. Moreover, we give a description of the center of $\Aut(G_\Gamma)$ in terms of the defining graph $\Gamma$.

\vspace{1cm}
\hspace{-0.6cm}
{\bf Key words.} \textit{Automorphism group of a right-angled Artin group, automorphism group of a graph product, the center of the automorphism groups of a graph product, virtually solvable normal subgroups, full-sized normal subgroups, automatic continuity, the stable rank of the reduced group $C^\ast$-algebra of $\Aut(G_\Gamma)$.}	
\medskip

\medskip
\hspace{-0.5cm}{\bf 2010 Mathematics Subject Classification.} Primary: 20E36; Secondary: 20E06.

\thanks{The first author is funded
	by a stipend of the Studienstiftung des deutschen Volkes and  by the Deutsche Forschungsgemeinschaft (DFG, German Research Foundation) under Germany's Excellence Strategy EXC 2044--390685587, Mathematics M\"unster: Dynamics-Geometry-Structure. The second author is supported by DFG grant VA 1397/2-2. This work is part of the PhD project of the first author.}
\end{abstract}

\maketitle
\section{Introduction}
In the study of certain questions in mathematics, there are often times two extreme cases that come to mind instantly. For example the subgroups of a certain group can either be ``small'' (e.g. finite/virtually abelian/virtually solvable) or ``large'' (e.g. contain a non-abelian free group/have a subgroup of finite index with a non-abelian free quotient) or anything in between. Sometimes, when the object in question is particularly well-behaved, only the two extreme cases can happen. In group theory, the above dichotomy is called the  
Tits alternative. This name is derived from the result of Tits who showed that a finitely generated linear group is virtually solvable or contains a non-abelian free group as a subgroup \cite{Tits}.

\subsection{Normal subgroups in automorphism groups of right-angled Artin groups} 
Following the notion of \cite{Bridson} a group $G$ is called \emph{full-sized} if it contains a non-abelian free group $F_2\hookrightarrow G$. Here we are interested in the structure of non full-sized normal subgroups in the automorphism group of a group that is defined via a graph. Given a finite simplicial graph $\Gamma$ with  the vertex set $V(\Gamma)$ and the edge set $E(\Gamma)$, the \emph{right-angled Artin group} $A_\Gamma$ is defined as follows
$$A_\Gamma:=\langle V(\Gamma)\mid vw=wv \text{ whenever } \{v,w\} \in E(\Gamma) \rangle.$$ For instance, if $\Gamma$ is a clique, then $A_\Gamma$ is the free abelian group $\mathbb{Z}^{|V(\Gamma)|}$ and if $\Gamma$ has no edges, then $A_\Gamma$ is the free group of rank $|V(\Gamma)|$.

The starting point for our investigations on normal subgroups in $\Aut(A_\Gamma)$ is the well-studied group $\Aut(\Z^n)\cong {\rm GL}_n(\mathbb{Z})$. Given a normal subgroup $N\trianglelefteq{\rm GL}_n(\mathbb{Z})$, it is known that  $N$ is finite or $N$ is full-sized. Since ${\rm GL}_n(\mathbb{Z})$ shares many properties with the automorphism group of a free group and to some extend also with the automorphism group of an arbitrary right-angled Artin group, our goal here is to understand the structure of non full-sized normal subgroups in $\Aut(A_\Gamma)$. It was proven by Horbez in \cite{Horbez} that a non full-sized subgroup in $\Aut(A_\Gamma)$ is virtually solvable. Thus we are in particular interested in the structure of virtually solvable normal subgroups in $\Aut(A_\Gamma)$. We start by considering normal subgroups in the automorphism group of a centerless right-angled Artin group.
\begin{PropositionA}
	Let $\Aut(A_\Gamma)$ be the automorphism group of a centerless right-angled Artin group $A_\Gamma$ and let $N\trianglelefteq\Aut(A_\Gamma)$ be a normal subgroup. If $N$ is non-trivial, then $N$ is full-sized.
\end{PropositionA}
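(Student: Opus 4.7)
The approach is to exploit the normal subgroup $\Inn(A_\Gamma)\triangleleft \Aut(A_\Gamma)$. Centerlessness gives $\Inn(A_\Gamma)\cong A_\Gamma$, and I first show $H:=N\cap \Inn(A_\Gamma)$ is non-trivial. If it were trivial then both factors being normal would give $[N,\Inn(A_\Gamma)]\subseteq H=\{1\}$, so $N$ would centralize $\Inn(A_\Gamma)$; any such $\phi$ satisfies $\iota_{\phi(g)}=\phi\iota_g\phi^{-1}=\iota_g$, hence $\phi(g)g^{-1}\in Z(A_\Gamma)=\{1\}$ and $\phi=\mathrm{id}$, forcing $N=\{1\}$ and a contradiction.

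So $H$ is a non-trivial normal subgroup of $\Aut(A_\Gamma)$ contained in $\Inn(A_\Gamma)$, i.e., a non-trivial \emph{characteristic} subgroup of $A_\Gamma$. Since $H\subseteq N$, it suffices to embed $F_2$ into $H$. By the theorem of Horbez cited in the introduction, $H$ is either full-sized or virtually solvable, so only the virtually solvable case needs to be excluded. Suppose $H$ is virtually solvable. Baudisch's theorem (any two elements of a RAAG either commute or generate $F_2$) forces every solvable subgroup of $A_\Gamma$ to be abelian, and abelian subgroups of $A_\Gamma$ are finitely generated free abelian by the Flat Torus Theorem applied to the Salvetti complex. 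Hence $H$ is finitely generated virtually abelian; intersecting all subgroups of $H$ of a fixed finite index (finitely many, since $H$ is finitely generated) yields a characteristic-in-$H$, finite-index abelian subgroup $A$. Because characteristic-in-characteristic is characteristic, $A$ becomes a non-trivial abelian characteristic subgroup of $A_\Gamma$, in particular a non-trivial abelian normal subgroup of the centerless group $A_\Gamma$.

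The heart of the argument is then to rule out non-trivial abelian normal subgroups in a centerless RAAG. Given $a\in A\setminus\{1\}$, centerlessness provides a generator $v$ of $A_\Gamma$ with $[v,a]\neq 1$; since $A$ is abelian and $vav^{-1}\in A$, the conjugate $vav^{-1}$ nevertheless commutes with $a$. Using Servatius' description of centralizers in RAAGs---for cyclically reduced $a$, $C_{A_\Gamma}(a)$ is generated by $a$ together with the vertices commuting with every letter in the support of $a$---this commutation relation, applied for every generator $v$, should force every generator to commute with $a$, yielding $a\in Z(A_\Gamma)=\{1\}$, a contradiction. I expect this final step to be the main obstacle: the abelianness of $A$ constrains conjugates of $a$ only up to membership in $C_{A_\Gamma}(a)$, so turning this into the stronger assertion ``$a$ is central'' requires care with cyclic reduction and with the combinatorics of the support subgraph of $a$. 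An attractive alternative is to invoke the acylindrical hyperbolicity of non-abelian centerless RAAGs together with the Dahmani-Guirardel-Osin theorem, by which any non-trivial normal subgroup of such a group already contains $F_2$, bypassing both the virtually solvable reduction and the centralizer analysis.
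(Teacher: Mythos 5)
Your opening reduction is exactly the paper's (Lemma \ref{GeneralCenterless}): show $H:=N\cap\Inn(A_\Gamma)$ is non-trivial via $\gamma_{\phi(g)}\circ\gamma_{g}^{-1}\in N\cap\Inn(A_\Gamma)$ and centerlessness, then transfer the problem to a non-trivial normal (indeed characteristic) subgroup of $A_\Gamma$ itself. That part is correct. Where you diverge is in showing that such a subgroup is full-sized, and there your argument is both circuitous and, as you yourself flag, unfinished. The detour through Horbez, the Flat Torus Theorem, and the characteristic finite-index abelian subgroup is unnecessary: Baudisch already gives that any subgroup of $A_\Gamma$ containing no copy of $F_2$ has all pairs of elements commuting, hence is abelian, so you land immediately on ``non-trivial abelian normal subgroup of a centerless RAAG'' without invoking virtual solvability at all.

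The genuine gap is the final step, which you leave as an expectation rather than a proof. The Servatius-centralizer route you sketch is workable but delicate for the reasons you name; the clean fix is to apply Baudisch one more time, which is precisely what the paper's Lemma \ref{F2} does. Given $1\neq a$ in the abelian normal subgroup $A$ and a generator $v$ with $[v,a]\neq 1$, Baudisch gives $\langle v,a\rangle\cong F_2$; inside a free group $a$ and $vav^{-1}$ cannot commute (otherwise $v$ would normalize, hence centralize, the maximal cyclic subgroup containing $a$, contradicting $[v,a]\neq1$), yet normality and abelianness of $A$ force them to commute. This closes the argument with no centralizer combinatorics. Finally, your proposed ``attractive alternative'' via acylindrical hyperbolicity and Dahmani--Guirardel--Osin does not work in the stated generality: a centerless right-angled Artin group need not be acylindrically hyperbolic (e.g.\ $F_2\times F_2$, whose defining graph is a square), so that shortcut fails exactly in the decomposable cases.
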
	

Given a right-angled Artin group $A_\Gamma$, then the center of $A_\Gamma$ is a direct factor of $A_\Gamma$ and is itself a right-angled Artin group given by the induced subgraph $\Delta$ whose vertices $v$ are precisely the ones satisfying $st(v)=V(\Gamma)$ where $st(v):=\left\{w\in V(\Gamma)\mid \left\{v,w\right\}\in E(\Gamma)\right\}\cup\left\{v\right\}$. Hence, many right-angled Artin groups have non-trivial centers. We  denote the vertices in $V(\Delta)$ by $\left\{v_1,\ldots, v_n\right\}$ and  the vertices in $V(\Gamma)-V(\Delta)$ by $\left\{w_1,\ldots, w_m\right\}$.

Let us discuss the center and some other properties of the right-angled Artin group defined via the graph $\Gamma$ in Figure 1.

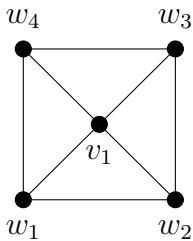
\begin{figure}[h]
	\begin{center}
	\captionsetup{justification=centering}
		\begin{tikzpicture}
			\draw[fill=black]  (0,0) circle (3pt);
			\node at (0,-0.4) {$w_1$};
			\draw[fill=black]  (2,0) circle (3pt);
			\node at (2,-0.4) {$w_2$};	
			\draw[fill=black]  (2,2) circle (3pt);
			\node at (2,2.4) {$w_3$};
			\draw[fill=black]  (0,2) circle (3pt);
			\node at (0,2.4) {$w_4$};	
			\draw[fill=black]  (1,1) circle (3pt);
			\node at (1,0.6) {$v_1$};		
			\draw (0,0)--(2,0);	
			\draw (2,0)--(2,2);	
			\draw (2,2)--(0,2);
			\draw (0,0)--(0,2);
			\draw (0,0)--(1,1);	
			\draw (2,0)--(1,1);	
			\draw (2,2)--(1,1);
			\draw (0,2)--(1,1);
		\end{tikzpicture}

\caption{Graph $\Gamma$.}

\end{center}
\end{figure}

The center of $A_\Gamma$ is generated by $v_1$ and is a direct factor of $A_\Gamma$, thus $A_\Gamma\cong \langle v_1\rangle\times\langle w_1, w_2, w_3, w_4\rangle$. For $w_i\in V(\Gamma)-V(\Delta)$ we define an automorphism $\rho_{w_iv_1}\in \Aut(A_\Gamma)$ as follows: $\rho_{w_iv_1}(w_i)=w_iv_1$, $\rho_{w_iv_1}(w_j)=w_j$, $i\neq j$ and $\rho_{w_iv_1}(v_1)=v_1$. One can show that the subgroup $\langle \rho_{w_iv_1}\mid i=1,\ldots,4\rangle\subseteq \Aut(A_\Gamma)$ is normal and it is isomorphic to $\mathbb{Z}^4$. This normal subgroup is not maximal under all non full-sized normal subgroups in $\Aut(A_\Gamma)$ since the subgroup $\langle \rho_{w_iv_1}, \iota\mid i=1,\ldots,4\rangle$ where $\iota(v_1)=v_1^{-1}$ and $\iota(w_i)=w_i$ for $i=1,\ldots, 4$ is also normal. Furthermore, this normal subgroup is isomorphic to $\Z^4\rtimes \Z/2\Z$ and it is indeed maximal under all non full-sized normal subgroups in $\Aut(A_\Gamma)$. We show that a maximal non full-sized normal subgroup in the automorphism group of a general right-angled Artin group is unique.   
\begin{TheoremB}	 
	Let $\Aut(A_\Gamma)$ be the automorphism group of a right-angled Artin group $A_\Gamma$ and let $N\trianglelefteq{\rm Aut}(A_\Gamma)$ be a non-trivial maximal (with respect to inclusion) normal subgroup. 
	
	If $N$ is non full-sized, then either
	\begin{enumerate}
	\item $V(\Gamma)-V(\Delta)=\emptyset$ and $N=\left\{id, \iota\right\}$ where $\iota(v_i)=v_i^{-1}$ for all $v_i\in V(\Delta)$
	\end{enumerate}
or
\begin{enumerate}
	\item[(ii)] $V(\Delta)=\left\{v_1,...,v_n\right\}$, $V(\Gamma)-V(\Delta)=\left\{w_1,...,w_m\right\}$ for $n,m\geq 1$ and $N\cong T_\Delta\rtimes \left\{id, \iota\right\}\cong \Z^{n\cdot m}\rtimes\Z/2\Z$ where $$T_\Delta:=\left\{f\in\Aut(A_\Gamma)\mid w_i^{-1}f(w_i)\in A_\Delta\text{ and } f(v_j)=v_j\text{ for all } i=1,\ldots, m\text{ and } j=1,\ldots, n\right\}.$$ 
	\end{enumerate}
 Furthermore, let $N\trianglelefteq\Aut(A_\Gamma)$ be a non-trivial minimal non full-sized normal subgroup and set $n:=|V(\Delta)|$ and $m:=|V(\Gamma)-V(\Delta)|$. If $n,m\geq 1$, then $N\cong \Z^l$ for some $n\leq l\leq n\cdot m$.
\end{TheoremB}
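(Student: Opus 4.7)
The plan is to sandwich any such $N$ inside the candidate $T_\Delta \rtimes \{\mathrm{id}, \iota\}$ via two natural projections, then verify that this candidate is itself normal and non full-sized. Decompose $A_\Gamma \cong A_\Delta \times A_{\Gamma - \Delta}$, where $A_\Delta$ is the center and $A_{\Gamma - \Delta}$ is centerless (each $w \in V(\Gamma) - V(\Delta)$ has some non-neighbour in $V(\Gamma)$, and that non-neighbour necessarily lies in $V(\Gamma) - V(\Delta)$ as well). Since $Z(A_\Gamma)$ is characteristic, there is an induced surjection $\pi \colon \Aut(A_\Gamma) \twoheadrightarrow \Aut(A_{\Gamma - \Delta})$, split by $\sigma \mapsto \mathrm{id}_{A_\Delta} \times \sigma$. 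By Horbez's theorem $N$ is virtually solvable, so $\pi(N)$ is virtually solvable, and being normal in $\Aut(A_{\Gamma - \Delta})$ and hence non full-sized, Proposition A forces $\pi(N) = 1$, i.e.\ $N \subseteq \ker\pi$. Analogously, the restriction $\rho \colon \Aut(A_\Gamma) \to \GL_n(\Z)$, $f \mapsto f|_{A_\Delta}$, sends $N$ to a non full-sized normal subgroup of $\GL_n(\Z)$; by the Margulis normal subgroup theorem for $n \geq 3$ together with a direct analysis of $\SL_2(\Z) \cong \Z/4 \ast_{\Z/2} \Z/6$ for $n = 2$ (using that nontrivial normal subgroups of $F_2$ are full-sized), the only non full-sized normal subgroups of $\GL_n(\Z)$ are contained in the center $\{\pm I\}$. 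Combining the two bounds gives
$$N \subseteq \ker\pi \cap \rho^{-1}(\{\pm I\}) = T_\Delta \rtimes \{\mathrm{id}, \iota\},$$
the final equality being the explicit description of automorphisms that descend to the identity on $A_{\Gamma - \Delta}$ and act as $\pm I$ on the center.

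For the maximality assertion, $H := T_\Delta \rtimes \{\mathrm{id}, \iota\}$ equals $\ker\pi \cap \rho^{-1}(Z(\GL_n(\Z)))$, hence is normal as an intersection of normal subgroups, and is virtually abelian, hence non full-sized. Maximality therefore yields $N = H$, producing case (i) when $m = 0$ (so $T_\Delta = 1$) and case (ii) when $n, m \geq 1$.

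For the minimality assertion with $n, m \geq 1$, the intersection $N \cap T_\Delta$ is normal in $\Aut(A_\Gamma)$ and has index at most two in $N$, so minimality forces $N \cap T_\Delta \in \{1, N\}$. If $N \cap T_\Delta = 1$ then $N = \{\mathrm{id}, t\iota\}$ with $t\iota$ central in $\Aut(A_\Gamma)$; conjugating by $\alpha \in \GL_n(\Z)$ forces $t = 0$, reducing to $N = \{\mathrm{id}, \iota\}$, but conjugating $\iota$ by the transvection $\rho_{w_1 v_1}$ yields the automorphism $v_j \mapsto v_j^{-1}$, $w_1 \mapsto w_1 v_1^2$, $w_i \mapsto w_i$ for $i \geq 2$, which differs from $\iota$, contradicting centrality. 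Hence $N \leq T_\Delta$, so $N \cong \Z^l$ with $l \leq nm$. The lower bound $l \geq n$ follows because $N$ is $\GL_n(\Z)$-invariant under the diagonal action on $T_\Delta \cong A_\Delta^m$, and the $\Z$-span of the $\GL_n(\Z)$-orbit of any nonzero $(z_1, \dots, z_m) \in N$ projects, via any coordinate $i$ with $z_i \neq 0$, onto the $\Z$-span of the $\GL_n(\Z)$-orbit of $z_i$ in $A_\Delta \cong \Z^n$, which has rank $n$.

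The principal technical obstacle is the classification of non full-sized normal subgroups of $\GL_n(\Z)$ used in the first paragraph, especially the appeal to Margulis's normal subgroup theorem in the $n \geq 3$ case; the transvection computation in the minimality argument, while elementary, is the other delicate point as it requires exhibiting a concrete automorphism that fails to commute with $\iota$.
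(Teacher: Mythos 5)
Your proof is correct and follows essentially the same route as the paper: reduce via the two projections induced by the characteristic center $A_\Delta$, use Proposition A for the $\Aut(A_{\Gamma-\Delta})$ factor and the classification of non full-sized normal subgroups of $\GL_n(\Z)$ for the $\Aut(A_\Delta)$ factor to trap $N$ inside $T_\Delta\rtimes\{\mathrm{id},\iota\}$, and then settle maximality and minimality separately. Your two local refinements --- obtaining normality of the candidate structurally as $\ker\pi\cap\rho^{-1}(Z(\GL_n(\Z)))$ rather than checking it on the Corredor--Gutierrez generators, and deriving the lower bound $l\geq n$ from the diagonal $\GL_n(\Z)$-action on $T_\Delta\cong(\Z^n)^m$ rather than from the paper's explicit permutation-plus-transvection computation --- are cleaner than, but equivalent to, the corresponding steps in the paper.
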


As an immediate corollary we have
\begin{CorollaryC}
	The automorphism group of a right-angled Artin group $\Aut(A_\Gamma)$ does have non-trivial finite normal subgroups if and only if $\Gamma$ is a clique.
\end{CorollaryC}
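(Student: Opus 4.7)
The plan is to prove both directions by invoking Proposition A and the two halves of Theorem B.

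For the easy direction, if $\Gamma$ is a clique on $n$ vertices, then $A_\Gamma\cong\Z^n$ and $\Aut(A_\Gamma)\cong\GL_n(\Z)$, which contains the central subgroup $\{\pm I\}$ of order two; this is a non-trivial finite normal subgroup, giving the ``if''.

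For the converse, suppose $\Gamma$ is not a clique and assume for contradiction that $N\trianglelefteq\Aut(A_\Gamma)$ is a non-trivial finite normal subgroup. Being finite, $N$ is automatically non full-sized. I would split into two cases according to whether $\Delta$ is empty. If $V(\Delta)=\emptyset$, so $A_\Gamma$ is centerless, then Proposition A immediately forces $N$ to be either trivial or full-sized, contradicting the assumption on $N$. Otherwise $V(\Delta)\neq\emptyset$, and since $\Gamma$ is not a clique we also have $V(\Gamma)-V(\Delta)\neq\emptyset$, so both $n=|V(\Delta)|\geq 1$ and $m=|V(\Gamma)-V(\Delta)|\geq 1$.

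In this remaining case the idea is to descend inside $N$ to a minimal non full-sized normal subgroup and apply the second part of Theorem B. Because $N$ is finite and non-trivial, the collection of non-trivial normal subgroups of $\Aut(A_\Gamma)$ contained in $N$ is a non-empty finite poset, so it admits a minimal element $N_0$. Then $N_0$ is non-trivial, contained in $N$ (hence finite and in particular non full-sized), and minimal among non-trivial non full-sized normal subgroups of $\Aut(A_\Gamma)$: any non-trivial non full-sized normal subgroup of $\Aut(A_\Gamma)$ strictly contained in $N_0$ would sit inside $N$ and violate minimality. Applying Theorem B gives $N_0\cong\Z^l$ for some $n\leq l\leq n\cdot m$, contradicting the finiteness of $N_0$.

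The only non-cosmetic step is the descent to $N_0$ and the verification that $N_0$ qualifies as a minimal non full-sized normal subgroup in the sense of Theorem B; everything else is a direct application of results already proved. I do not anticipate any real obstacle, since finiteness of $N$ makes the chain condition trivial, and once Theorem B is applied the required contradiction is immediate.
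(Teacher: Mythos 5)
Your proposal is correct and follows essentially the paper's route: the paper's entire proof of Corollary C is the remark that Theorem B implies it, and your case split (clique $\Rightarrow$ $\GL_n(\Z)$ with center $\{\pm I\}$; $V(\Delta)=\emptyset$ $\Rightarrow$ Proposition A; otherwise descend to a minimal non-trivial normal subgroup inside the finite $N$ and invoke the $N\cong\Z^l$, $l\geq n\geq 1$ clause of Theorem B) is exactly the deduction that implication amounts to. The descent to $N_0$ and the verification that it is minimal among all non-trivial non full-sized normal subgroups are both sound, so there is no gap.
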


It is a known fact that a centerless group has a centerless automorphism group. The center of a right-angled Artin group can be easily read off the defining graph $\Gamma$ as discussed above. Thus, there are many right-angled Artin groups with non-trivial center. Nevertheless, we show that the automorphism group of a right-angled Artin group is in most cases centerless.
\begin{CorollaryD}
	The automorphism group of a right-angled Artin group $\Aut(A_\Gamma)$ is centerless if and only if $\Gamma$ is not a clique.
\end{CorollaryD}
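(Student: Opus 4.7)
Proof plan.

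The direction that $\Gamma$ being a clique yields a non-trivial center is immediate: if $\Gamma$ is a clique on $n$ vertices then $A_\Gamma\cong\Z^n$ and $\Aut(A_\Gamma)\cong\GL_n(\Z)$, whose center $\{\pm I\}$ is non-trivial. The work is the converse, and my plan is to analyze an arbitrary central element $\phi\in Z:=Z(\Aut(A_\Gamma))$ by testing it against carefully chosen automorphisms. Throughout assume $\Gamma$ is not a clique, so $V(\Gamma)-V(\Delta)=\{w_1,\dots,w_m\}$ with $m\geq 1$.

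First I will use that $\phi$ commutes with every inner automorphism of $A_\Gamma$. Since $\phi$ is surjective, the identity $\phi(gxg^{-1})=g\phi(x)g^{-1}$ forces $g^{-1}\phi(g)$ to centralize all of $A_\Gamma$, hence to lie in $Z(A_\Gamma)=A_\Delta$, for every $g$. The resulting map $z\colon A_\Gamma\to A_\Delta$, $g\mapsto g^{-1}\phi(g)$, is then a group homomorphism (using centrality of $A_\Delta$). If $V(\Delta)=\varnothing$ then $z\equiv 1$ and $\phi=\mathrm{id}$; equivalently, $A_\Gamma$ is centerless and Proposition~A, applied to the abelian normal subgroup $Z$, forces $Z=\{\mathrm{id}\}$.

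Suppose then $V(\Delta)=\{v_1,\dots,v_n\}$ with $n\geq 1$. Two families of test automorphisms pin $\phi$ down. For each $v_j\in V(\Delta)$ the inversion $\sigma_{v_j}$ sending $v_j\mapsto v_j^{-1}$ and fixing the other generators is an automorphism since $v_j$ is central; evaluating the commutation $\phi\sigma_{v_j}=\sigma_{v_j}\phi$ at $w_i$ gives $z(w_i)=\sigma_{v_j}(z(w_i))$, which forces the $v_j$-exponent of $z(w_i)\in A_\Delta$ to vanish, so letting $j$ vary yields $z(w_i)=1$, i.e.\ $\phi(w_i)=w_i$ for every $i$. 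Next, for distinct $i,j$, the transvection $\rho_{v_iv_j}$ sending $v_i\mapsto v_iv_j$ and fixing the other generators is an automorphism because $v_j\in V(\Delta)$; restricting $\phi\rho_{v_iv_j}=\rho_{v_iv_j}\phi$ to $A_\Delta$ shows that $\phi|_{A_\Delta}\in\GL_n(\Z)$ centralizes every elementary matrix, hence centralizes $\SL_n(\Z)$, so $\phi|_{A_\Delta}\in\{\pm I\}$ (trivially when $n=1$).

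It remains to eliminate $\phi|_{A_\Delta}=-I$. In that case $\phi$ coincides with the inversion $\iota$ of Theorem~B, but the transvection $\rho_{w_1v_1}$ sending $w_1\mapsto w_1v_1$ and fixing the other generators (well-defined because $v_1\in V(\Delta)$) satisfies $\iota\rho_{w_1v_1}(w_1)=w_1v_1^{-1}\neq w_1v_1=\rho_{w_1v_1}\iota(w_1)$, contradicting $\phi\in Z$. Hence $\phi|_{A_\Delta}=I$, so $\phi=\mathrm{id}$ and $Z=\{\mathrm{id}\}$. The main obstacle is the centralizer computation in the previous paragraph: to conclude $\phi|_{A_\Delta}\in\{\pm I\}$ one must invoke that the centralizer of $\SL_n(\Z)$ in $\GL_n(\Z)$ is exactly $\{\pm I\}$ (by Schur on the standard rational representation). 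The remaining work is a careful bookkeeping of commutation relations against three families of test automorphisms: inversions on $V(\Delta)$, transvections within $V(\Delta)$, and the mixed transvection $\rho_{w_1v_1}$.
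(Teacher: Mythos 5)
Your proof is correct, but it takes a genuinely different route from the paper. The paper deduces Corollary D from heavier machinery: Lemma 7.4 shows that $Z(\Aut(G_\Gamma))$ is always finite, and Theorem G (for right-angled Artin groups, essentially Corollary C, which rests on Theorem B) shows that $\Aut(A_\Gamma)$ has no non-trivial finite normal subgroups when $\Gamma$ is not a clique; since the center is a finite normal subgroup, it must be trivial. You instead analyze a central element $\phi$ directly: the commutation with inner automorphisms giving $g^{-1}\phi(g)\in Z(A_\Gamma)=A_\Delta$ is exactly the paper's Lemma 7.1, and your tests against inversions of central generators, transvections inside $V(\Delta)$, and the mixed transvection $\rho_{w_1v_1}$ closely resemble computations the paper performs elsewhere in Section 7 (e.g.\ in Lemma 7.4 and Proposition 7.13), but assembled into a short self-contained argument that bypasses the normal-subgroup classification entirely. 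All the steps check out: the inversion test correctly kills $z(w_i)$ because $A_\Delta$ is free abelian, the centralizer of the elementary matrices in $\GL_n(\Z)$ is indeed $\{\pm I\}$ (trivially for $n=1$), and the non-commutation of $\iota$ with $\rho_{w_1v_1}$ uses $m\geq 1$, which is exactly where the hypothesis that $\Gamma$ is not a clique enters. What your approach buys is brevity and independence from Theorems B and G; what the paper's approach buys is the stronger conclusion (absence of all non-trivial finite normal subgroups, with consequences for automatic continuity) and uniformity across general graph products of cyclic groups.
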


The center of the automorphism group of a right-angled Artin group was also determined in \cite{Fullarton}, but the proof of \cite[Prop. 5.1]{Fullarton} contains a small gap.

\subsection*{Structure of the proofs of Proposition A and Theorem B}
The crucial ingredient in the proof of Proposition A is a result by Baudisch which says that a two generated subgroup in $A_\Gamma$ is abelian or isomorphic to $F_2$ \cite[Thm. 1.2]{Baudisch}. This result is used to prove that a normal subgroup in $A_\Gamma$ is contained in the center or is full-sized. Using the subgroup consisting of inner automorphisms of $A_\Gamma$ and the characterization of non full-sized normal subgroups in $A_\Gamma$ we are able to prove Proposition A. For the proof of Theorem B we use Proposition A and a short exact sequence which is given by the fact that $A_\Delta$ is a characteristic subgroup:
$\left\{id\right\}\to T_\Delta\to \Aut(A_\Gamma)\to\Aut(A_\Delta)\times\Aut(A_{\Gamma-\Delta})\to\left\{id\right\}$.
\vspace{0.3cm}

\subsection{Normal subgroups in automorphism groups of graph products}
Right-angled Artin groups are closely related to graph products of groups. Given a finite simplicial graph $\Gamma$ and a collection of non-trivial groups $\{ G_u \mid u \in V(\Gamma) \}$ indexed by the vertex-set $V(\Gamma)$ of $\Gamma$, the \emph{graph product} $G_\Gamma$ is defined as the quotient
$$ \left( \underset{u \in V(\Gamma)}{\ast} G_u \right) / \langle \langle [g,h]=1, \ g \in G_u, h \in G_v, \{u,v\} \in E(\Gamma) \rangle \rangle$$
where $E(\Gamma)$ denotes the edge-set of $\Gamma$. If all vertex groups are infinite cyclic, then $G_\Gamma$ is a right-angled Artin group and if all vertex groups are cyclic of order two, then $G_\Gamma$ is called a \emph{right-angled Coxeter group} and will be denoted by $W_\Gamma$.

A natural question is if the results of Proposition A and Theorem B also hold for graph products of cyclic groups. Let us focus on the result of Proposition A that tells us that a non-trivial normal subgroup in the automorphism group of a centerless right-angled Artin group is always full-sized. Let us consider the centerless right-angled Coxeter group $\Z/2\Z*\Z/2\Z$ and its automorphism group. The group of inner automorphisms in $\Aut(\Z/2\Z*\Z/2\Z)$ is isomorphic to $\Z/2\Z*\Z/2\Z$.  Hence, the group $\Aut(\Z/2\Z*\Z/2\Z)$  has a normal subgroup isomorphic to $\Z/2\Z*\Z/2\Z$ and this group is obviously non full-sized.  
In the centerless case this is however the only way for non-trivial non full-sized normal subgroups to exist. 

\begin{PropositionE}(see Propositions \ref{CenterlessAutG} and \ref{FiniteVertexGroups})
Let $G_\Gamma$ be a graph product of non-trivial groups. We decompose $G_\Gamma\cong G_{\Gamma_1}\times\ldots\times G_{\Gamma_n}$ as a direct product where the factors are directly indecomposable special parabolic subgroups. Assume that $G_{\Gamma_i}\ncong\Z/2\Z*\Z/2\Z$ for all $i=1,\ldots, n$.
\begin{enumerate}
\item If $|V(\Gamma_i)|\geq 2$ for $i=1,\ldots,n$, then a non-trivial normal subgroup in $\Aut(G_\Gamma)$ is full-sized.
In particular, if $G_\Gamma$ is a centerless graph product of abelian groups, then a non-trivial normal subgroup in $\Aut(G_\Gamma)$ is full-sized. 
\item If all vertex groups are finite, then a normal subgroup in $\Aut(G_\Gamma)$ is finite or full-sized.
\end{enumerate}
\end{PropositionE}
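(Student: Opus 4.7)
For part (i), my strategy is to mimic the proof of Proposition A in the graph product setting. I first observe that $G_\Gamma$ is centerless: since each $G_{\Gamma_i}$ is directly indecomposable, $\Gamma_i$ is not a join, so no vertex of $\Gamma_i$ is adjacent to all others; combined with $|V(\Gamma_i)|\geq 2$, this forces $Z(G_{\Gamma_i}) = \{1\}$ and hence $Z(G_\Gamma) = \prod_i Z(G_{\Gamma_i}) = \{1\}$. Consequently $\Inn(G_\Gamma) \cong G_\Gamma$ sits as a normal subgroup of $\Aut(G_\Gamma)$ with trivial centralizer there: if $\phi\, c_g\, \phi^{-1} = c_g$ for all $g$, then $c_{\phi(g)} = c_g$ forces $\phi(g)g^{-1}\in Z(G_\Gamma) = \{1\}$.

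Given a non-trivial $N \trianglelefteq \Aut(G_\Gamma)$, I split on $N \cap \Inn(G_\Gamma)$. If this intersection is trivial, then $[N, \Inn(G_\Gamma)] \subseteq N \cap \Inn(G_\Gamma) = \{1\}$, so $N$ centralizes $\Inn(G_\Gamma)$ and hence $N = \{id\}$, a contradiction. Otherwise $N \cap \Inn(G_\Gamma)$ corresponds to a non-trivial normal subgroup $M$ of $G_\Gamma$, and I need $M$ to be full-sized. The projection of $M$ to some factor $G_{\Gamma_i}$ is non-trivial and normal in $G_{\Gamma_i}$, so it suffices to establish the dichotomy for each directly indecomposable factor: non-trivial normal subgroups of such $G_{\Gamma_i}$ are full-sized (this is Proposition \ref{CenterlessAutG} in the paper and rests on a Baudisch-style two-generator dichotomy for graph products). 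Once the projection of $M$ to $G_{\Gamma_i}$ contains $F_2$, a free pair lifts back to $M$ because no non-trivial relation in $M$ can become trivial in a free quotient. The ``in particular'' clause follows because a centerless graph product of abelian groups forces each directly indecomposable factor to have at least two vertices (a single abelian vertex would lie in the center).

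For part (ii), I split $G_\Gamma = Z \times G'$, where $Z = Z(G_\Gamma)$ collects the single-vertex factors of the decomposition (finite, as products of finite abelian groups) and $G'$ collects the factors with $|V(\Gamma_i)| \geq 2$. Then $G'$ satisfies the hypothesis of (i). The projection $G_\Gamma \to G'$ and the fact that $Z$ is characteristic produce a surjection $\bar\pi\colon \Aut(G_\Gamma) \twoheadrightarrow \Aut(G')$ (with section $\beta \mapsto \mathrm{id}_Z \times \beta$) whose kernel is an extension of $\mathrm{Hom}(G', Z)$ by $\Aut(Z)$, both finite because $Z$ is finite and $G'$ is finitely generated; hence $\ker(\bar\pi)$ is finite. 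For $N \trianglelefteq \Aut(G_\Gamma)$, the image $\bar\pi(N)$ is normal in $\Aut(G')$, so by (i) it is trivial or full-sized. In the first case $N \subseteq \ker(\bar\pi)$ is finite; in the second, the surjection $N \twoheadrightarrow \bar\pi(N)$ with finite kernel transfers a non-abelian free subgroup back to $N$, so $N$ is full-sized.

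The main obstacle is the graph-product analogue of the Baudisch-based dichotomy for normal subgroups of directly indecomposable centerless graph products with at least two vertices and not isomorphic to $\Z/2\Z * \Z/2\Z$. The exclusion of $\Z/2\Z * \Z/2\Z$ is essential: its inner automorphism group is a non-trivial normal subgroup of $\Aut(\Z/2\Z * \Z/2\Z)$ which is virtually abelian and hence not full-sized, so this case is a genuine obstruction rather than a mere technicality. The secondary points—checking that $\bar\pi$ is surjective with finite kernel and that the dichotomy from (i) applies to $\bar\pi(N)$—are routine once the decomposition $G_\Gamma = Z \times G'$ is set up.
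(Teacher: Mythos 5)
Your overall architecture coincides with the paper's: part (i) combines the dichotomy for normal subgroups of the graph product itself with the centerless-group transfer mechanism (your commutator argument is exactly Lemma \ref{GeneralCenterless}), and part (ii) reduces to part (i) via a finite characteristic direct factor whose associated kernel in the automorphism group is finite. Two of your supporting justifications, however, do not hold up as stated. First, the key dichotomy you defer to --- that a non-trivial normal subgroup of a directly indecomposable $G_{\Gamma_i}$ with $|V(\Gamma_i)|\geq 2$ and $G_{\Gamma_i}\ncong \Z/2\Z*\Z/2\Z$ is full-sized --- is Proposition \ref{NormalGraphProduct}, not Proposition \ref{CenterlessAutG}, and it does not rest on ``a Baudisch-style two-generator dichotomy for graph products.'' Baudisch's theorem (two-generated subgroups are abelian or free) is special to right-angled Artin groups and fails for graph products with torsion: already in $\Z/2\Z*\Z/2\Z$ the two canonical generators generate a group that is neither abelian nor free. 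The paper instead uses the Antolin--Minasyan Tits alternative (Proposition \ref{TitsAlternative}) to reduce to the cases $N\cong\Z$ or $N\cong\Z/2\Z*\Z/2\Z$, and then a Bass--Serre tree argument to show either case forces $G_{\Gamma_i}\cong\Z/2\Z*\Z/2\Z$. If you intended to actually establish the key lemma along Baudisch lines, that route is closed; citing it as a black box is fine, but the sketch of why it is true is wrong.

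Second, in part (ii) you identify $Z$ with $Z(G_\Gamma)$ and describe the single-vertex factors as ``finite abelian groups.'' The hypothesis is only that the vertex groups are finite; they may be non-abelian, in which case the product of the single-vertex direct factors is $G_\Delta$ but is strictly larger than the center, so ``characteristic because it is the center'' is unavailable. The decomposition $G_\Gamma\cong G_\Delta\times G_{\Gamma-\Delta}$ and the surjection onto $\Aut(G_{\Gamma-\Delta})$ with finite kernel survive, but the correct reason $G_\Delta$ is characteristic is that finite groups are graphically irreducible (Proposition \ref{graphicallyirreducible}, following Genevois), which is exactly what the paper's Proposition \ref{FiniteVertexGroups} invokes. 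With these two repairs your argument matches the paper's proof.
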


In the special case where the graph product is a right-angled Coxeter group we prove that if one of the factors in the direct decomposition of  $W_\Gamma$ is isomorphic to the infinite dihedral group, then $\Aut(W_\Gamma)$ has an infinite virtually abelian normal subgroup. More precisely, we define $J:=\left\{j\in\left\{1,\ldots,n\right\}\mid W_{\Gamma_j}\cong \Z/2\Z*\Z/2\Z\right\}$ and we show that $\prod_{j\in J}W_{\Gamma_j}$ as a subgroup of ${\rm Inn}(W_\Gamma)$ is normal in $\Aut(W_\Gamma)$. Thus the infinite dihedral group is a poison subgroup when it comes to the non-existence of infinite non full-sized normal subgroups in $\Aut(W_\Gamma)$ as the following corollary shows.

\begin{CorollaryF}
	Let $W_\Gamma$ be a right-angled Coxeter group. We decompose $W_\Gamma\cong W_{\Gamma_1}\times\ldots\times W_{\Gamma_n}$ as a direct product where the factors are directly indecomposable special parabolic subgroups. 
	
	There exists an infinite virtually abelian normal subgroup in $\Aut(W_\Gamma)$ if and only if there exists an $i\in\{1,\ldots,n\}$ such that $W_{\Gamma_i}\cong \Z/2\Z*\Z/2\Z$. Else every normal subgroup in $\Aut(W_\Gamma)$ is either finite or full-sized.
\end{CorollaryF}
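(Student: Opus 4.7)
The plan is to prove both implications by invoking Proposition E for one and constructing an explicit normal subgroup via inner automorphisms for the other.

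For the ``else'' direction (suppose no factor $W_{\Gamma_i}$ is isomorphic to $\Z/2\Z \ast \Z/2\Z$): since every vertex group of $W_\Gamma$ is isomorphic to $\Z/2\Z$ and is therefore finite, Proposition E(ii) applies and tells us that any normal subgroup of $\Aut(W_\Gamma)$ is either finite or full-sized. An infinite virtually abelian group is neither finite nor full-sized (virtually abelian groups do not contain a non-abelian free subgroup), so no infinite virtually abelian normal subgroup can exist.

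For the forward direction, assume $J := \{j \in \{1,\ldots,n\} \mid W_{\Gamma_j} \cong \Z/2\Z \ast \Z/2\Z\}$ is non-empty and set $H := \prod_{j \in J} W_{\Gamma_j} \leq W_\Gamma$. I first argue that $H$ is characteristic in $W_\Gamma$. The decomposition $W_\Gamma \cong W_{\Gamma_1} \times \cdots \times W_{\Gamma_n}$ into directly indecomposable special parabolic factors is unique up to permutation of isomorphic factors, in the sense of a Krull--Schmidt statement for right-angled Coxeter groups. Therefore any $\phi \in \Aut(W_\Gamma)$ permutes these factors, and the subgroup $H$, being the product of exactly those factors isomorphic to $\Z/2\Z \ast \Z/2\Z$, is preserved by $\phi$. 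Consequently, the image of $H$ in $\Inn(W_\Gamma)$ under the conjugation map $h\mapsto c_h$ is normal in $\Aut(W_\Gamma)$, because $\phi \circ c_h \circ \phi^{-1} = c_{\phi(h)}$ and $\phi(h) \in H$. Since each factor $W_{\Gamma_j}$ with $j \in J$ is isomorphic to $\Z/2\Z \ast \Z/2\Z$ and hence centerless, we obtain $H \cap Z(W_\Gamma) = \{1\}$, so this image is isomorphic to $H$ itself. The group $H$ is infinite (as $J$ is non-empty and $\Z/2\Z \ast \Z/2\Z$ is infinite) and virtually abelian (a finite product of virtually abelian groups is virtually abelian), yielding the desired normal subgroup of $\Aut(W_\Gamma)$.

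The main technical obstacle is establishing that $H$ is genuinely characteristic in $W_\Gamma$, which rests on a Krull--Schmidt-type uniqueness for the direct decomposition of $W_\Gamma$ into directly indecomposable special parabolic subgroups and on showing that every automorphism respects this decomposition up to a permutation of isomorphic factors. The paper signals that such a result is available; once granted, the remainder of the argument is a direct computation with inner automorphisms and the elementary observation that the infinite dihedral group is centerless and virtually infinite cyclic.
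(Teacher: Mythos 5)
Your ``else'' direction is exactly the paper's: all vertex groups of $W_\Gamma$ are finite, so Proposition E(ii) applies and rules out infinite normal subgroups that are not full-sized. The problem lies in the forward direction, specifically in the claim that every $\phi\in\Aut(W_\Gamma)$ permutes the directly indecomposable factors and hence that $H=\prod_{j\in J}W_{\Gamma_j}$ is characteristic. This is false whenever $Z(W_\Gamma)\neq\left\{1\right\}$. Take $\Gamma$ to be the path $v_1-v_2-v_3$ (Figure 2 of the paper), so that $W_\Gamma=\langle v_2\rangle\times\langle v_1,v_3\rangle\cong\Z/2\Z\times(\Z/2\Z*\Z/2\Z)$ and $H=\langle v_1,v_3\rangle$. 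The dominated transvection $\rho_{v_1v_2}$ is a well-defined automorphism (both vertices have order $2$ and $st(v_1)\subseteq st(v_2)$), it sends $v_1$ to $v_1v_2$, and therefore sends $H$ to $\langle v_1v_2,v_3\rangle\neq H$. No Krull--Schmidt statement rescues the assertion as written: uniqueness of the indecomposable decomposition up to a permutation of isomorphic factors does not imply that automorphisms preserve the given factor subgroups (already $(\Z/2\Z)^2$ shows this), and the paper neither states nor proves such a result.

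What your argument actually needs is only the weaker containment $\phi(H)\subseteq H\cdot Z(W_\Gamma)$, since $c_{\phi(h)}=c_{h'}$ whenever $\phi(h)=h'z$ with $z$ central; but that containment still requires proof. The paper supplies it by a different, concrete mechanism: for \emph{centerless} $W_\Gamma$ the vertices of the dihedral factors are precisely the vertices of valency $|V(\Gamma)|-2$, and centerlessness guarantees that no vertex has valency $|V(\Gamma)|-1$; hence $H$ is generated by all vertices of valency at least $|V(\Gamma)|-2$, and by Leder's theorem the normal closure of the special subgroup generated by all vertices of valency at least $k$ is characteristic. Since $H$ is a direct factor it equals its own normal closure, so it is characteristic, and only then is it pushed into $\Inn(W_\Gamma)$. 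To repair your proof you would either have to reproduce an argument of this kind, or pass to the centerless quotient $W_\Gamma/Z(W_\Gamma)\cong W_{\Gamma-\Delta}$ and verify that the induced automorphisms preserve the image of $H$ there. As it stands, the key step of your forward direction is unsupported and, in the form stated, false.
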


Now, for a graph product of arbitrary cyclic groups we are interested in conditions on $\Gamma$ such that $\Aut(G_\Gamma)$ does not have non-trivial {\it finite} normal subgroups. For this it is easier to ``blow up'' the graph product a bit, by replacing the vertex with associated vertex group group $\Z/n\Z$, where $n=p_1^{m_1}\cdot...\cdot p_k^{m_k}$ by a complete graph on $k$ vertices with vertex groups isomorphic to $\Z/(p_i^{m_i}\Z)$ for $1\leq i\leq k$. This yields isomorphic graph products, but allows us to assume that the vertex groups are infinite cyclic or have prime power order.

We prove:

\begin{TheoremG} Let $G_\Gamma$ be a graph product of cyclic groups where the finite vertex groups have prime power orders. Let $\Delta$ be the induced subgraph of $\Gamma$ generated by the vertices $v\in V(\Gamma)$ such that $st(v)=V(\Gamma)$.

The automorphism group $\Aut(G_\Gamma)$ does not have non-trivial finite normal subgroup if and only if one of the following conditions holds:
\begin{enumerate}
    \item $V(\Delta)=\emptyset$ or
    \item $G_\Gamma\cong \Z/2\Z$ or
    \item $V(\Delta)=\{v_1\}$, $G_{v_1}\cong \Z/2\Z$, $V(\Gamma)-V(\Delta)=\{w_1,....,w_m\}$, $m\geq 1$ and $ord(G_{w_j})<\infty$ and $2\nmid ord(G_{w_j})$ for all $1\leq j\leq m$ or
    \item for every $v\in V(\Delta)$ we have $G_v\cong \Z$ and there exists a $w\in V(\Gamma)-V(\Delta)$ such that $G_w\cong \Z$.
\end{enumerate}

\end{TheoremG}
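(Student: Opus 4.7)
My plan is to follow the blueprint of Theorem B, using that $G_\Delta$ is the center of $G_\Gamma$ and hence a characteristic direct factor. This gives the short exact sequence
\[
\{id\}\longrightarrow T_\Delta\longrightarrow\Aut(G_\Gamma)\longrightarrow\Aut(G_\Delta)\times\Aut(G_{\Gamma-\Delta})\longrightarrow\{id\},
\]
where $T_\Delta$ consists of automorphisms fixing every $v\in V(\Delta)$ pointwise and sending each $w\in V(\Gamma)-V(\Delta)$ to $w\cdot z_w$ with $z_w\in G_\Delta$. A direct computation with the defining relations identifies $T_\Delta\cong\bigoplus_{w\in V(\Gamma)-V(\Delta)}A_w$, with $A_w=G_\Delta$ when $G_w\cong\Z$ and $A_w=\{z\in G_\Delta\mid z^{\mathrm{ord}(w)}=1\}$ when $G_w$ is finite. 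I will also use that $G_{\Gamma-\Delta}$ is always centerless: any vertex central in $\Gamma-\Delta$ would, being already adjacent to every vertex of $\Delta$, be central in $\Gamma$.

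For the sufficiency direction, let $F\trianglelefteq\Aut(G_\Gamma)$ be a finite normal subgroup. Case (ii) is immediate and case (i) follows from Proposition E together with Corollary F, under which every non-trivial, non-full-sized normal subgroup is infinite. In case (iii) a direct calculation using the odd orders of the $G_{w_j}$ yields $T_\Delta=\{id\}$, so $F$ embeds into $\Aut(\Z/2\Z)\times\Aut(G_{\Gamma-\Delta})=\Aut(G_{\Gamma-\Delta})$; applying case (i) to the centerless graph product $G_{\Gamma-\Delta}$ gives $F=\{id\}$. In case (iv), $T_\Delta$ is torsion-free (as $G_\Delta\cong\Z^n$), so $F\cap T_\Delta=\{id\}$ and $F$ embeds into $\GL_n(\Z)\times\Aut(G_{\Gamma-\Delta})$; (i) kills the second projection, and for the first I conjugate lifts $\alpha_A\in F$ by the transvections $\rho_{wv_i}\in\Aut(G_\Gamma)$ (available thanks to the $\Z$-non-central vertex $w$). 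The commutator computation shows $\rho_{wv_i}\alpha_A\rho_{wv_i}^{-1}$ differs from $\alpha_A$ by the transvection $w\mapsto w\cdot v_i A(v_i)^{-1}$, which lies in $T_\Delta$; since $F\cap T_\Delta=\{id\}$ and $F$ is normal, this forces $A(v_i)=v_i$ for every $i$, hence $A=I$ and $F=\{id\}$.

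For the necessity direction, assume none of (i)--(iv) holds. If $G_\Gamma$ is finite then $\Aut(G_\Gamma)$ is itself a non-trivial finite normal subgroup (using $G_\Gamma\ne\Z/2\Z$). Otherwise some vertex group is infinite. The torsion subgroup of $T_\Delta$ is characteristic in $T_\Delta$ and hence normal in $\Aut(G_\Gamma)$; by the summand formula it is finite whenever no infinite-order $G_w$ interacts with a free abelian factor of $G_\Delta$, and the failure of (iii) together with the failure of (iv) ensures that some $A_w$ contains non-trivial torsion. The remaining sub-case is when all central $G_v\cong\Z$ and all non-central $G_w$ are finite, in which $T_\Delta=\{id\}$; here $\mathrm{Hom}(G_{\Gamma-\Delta}^{ab},\Z^n)=0$ since $G_{\Gamma-\Delta}^{ab}$ is torsion, so the complement $G_{\Gamma-\Delta}$ to $G_\Delta$ is unique and therefore characteristic, and $\Aut(G_\Gamma)\cong\GL_n(\Z)\times\Aut(G_{\Gamma-\Delta})$ splits as a direct product; the involution $\iota=(-I,id)$ lies in the centre of $\Aut(G_\Gamma)$ and generates the required non-trivial finite normal subgroup.

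The main obstacle is the sub-case analysis in the necessity direction, especially the exceptional sub-case just discussed: the direct product splitting of $\Aut(G_\Gamma)$ rests on a careful complement calculation using the torsion of $G_{\Gamma-\Delta}^{ab}$. A further subtle point is that in cases (iii) and (iv) of the sufficiency direction one must check that the image of $F$ in $\Aut(G_{\Gamma-\Delta})$ is normal there, which follows once one verifies that the natural restriction map $\Aut(G_\Gamma)\to\Aut(G_{\Gamma-\Delta})$ is surjective via the obvious extension by the identity on $G_\Delta$.
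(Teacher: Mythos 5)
Your sufficiency direction is essentially sound, and in case (iv) it is arguably cleaner than the paper's argument: the computation $(\rho_{wv_i}\circ\alpha_A\circ\rho_{wv_i}^{-1})\circ\alpha_A^{-1}=(w\mapsto w\,v_iA(v_i)^{-1})\in T_\Delta$ forces $A=I$ directly, whereas the paper first invokes Lemma \ref{GLnoF2} to reduce to $\pi_1(N)\subseteq\{id,\iota\}$ and then shows the surviving involution cannot be central. One caveat on case (i): the statement you need is that $V(\Delta)=\emptyset$ implies $\Aut(G_\Gamma)$ has no non-trivial finite normal subgroup \emph{without} excluding direct factors isomorphic to $\Z/2\Z*\Z/2\Z$. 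Proposition E as stated excludes such factors, and Corollary F (which concerns right-angled Coxeter groups only) merely asserts the existence of an infinite virtually abelian normal subgroup when such a factor is present -- it does not rule out finite ones. The fact you want is Proposition \ref{CenterlessAutG}(i), proved via Lemma \ref{GeneralCenterless} together with the observation that finite normal subgroups of a centerless graph product are trivial.

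The genuine gap is in the necessity direction: the case analysis does not close. Having disposed of finite $G_\Gamma$, you assert that otherwise some vertex group is infinite; this is false. Take $\Gamma$ the path $w_1-v_1-w_2$ with $G_{v_1}\cong\Z/4\Z$ and $G_{w_1}\cong G_{w_2}\cong\Z/3\Z$: then $G_\Gamma$ is infinite, all vertex groups are finite, and none of (i)--(iv) holds (condition (iii) fails because $G_{v_1}\not\cong\Z/2\Z$). Your second assertion, that the failure of (iii) and (iv) forces some $A_w$ to contain non-trivial torsion, also fails here: $A_{w_j}=\{z\in\Z/4\Z\mid z^{3}=1\}=\{1\}$, so $T_\Delta=\{id\}$ and its torsion subgroup is trivial. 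Your ``remaining sub-case'' requires every central vertex group to be $\Z$, so it does not apply either; the example simply falls through the analysis. The same happens whenever $G_\Delta$ has non-trivial torsion, $G_\Gamma$ is infinite, and no non-central vertex group is infinite or shares a prime with the torsion of $G_\Delta$ (e.g.\ $G_\Delta\cong\Z\times\Z/2\Z$ with all non-central groups of odd order). In all such configurations $T_\Delta=\{id\}$, so $\Aut(G_\Gamma)\cong\Aut(G_\Delta)\times\Aut(G_{\Gamma-\Delta})$, and the witness you must exhibit is a non-trivial finite normal subgroup of $\Aut(G_\Delta)$ itself: all of $\Aut(G_\Delta)$ when $G_\Delta$ is finite and not $\{1\}$ or $\Z/2\Z$, and $\ker\bigl(\Aut(\Z^n\times T)\to\GL_n(\Z)\times\Aut(T)\bigr)\cong T^n$ when $G_\Delta$ is infinite with torsion part $T\neq\{1\}$. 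This is precisely the step the paper supplies via Lemma \ref{CharacteristicBothFactors} and \cite[Lemma 2.16]{SaleSusse}; without it the ``only if'' half of the theorem is not proved.
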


\subsection*{Structure of the proofs of Proposition E and Theorem G}
First, we give a description of non full-sized normal subgroups of a graph product of arbitrary vertex groups, see Proposition \ref{NormalGraphProduct} and Corollary \ref{ProductNormal}. Using several characteristic subgroups of $G_\Gamma$ and the induced group homomorphisms  we prove Proposition E and use this result to show Theorem G. 
\vspace{0.2cm}

\subsection{The center of the automorphism group of a graph product}
Similar methods as used for the proof of Theorem G allow us to precisely compute the center of the automorphism group of a graph product of cyclic groups in many situations. Since the center is heavily dependant on the structure of the graph one has to distinguish between many cases. First, we investigate the center of the automorphism group of a finitely generated abelian group and collect known and new results in Propositions \ref{centerFreeabelianAndFinite} and \ref{centerDelta}. A description of the center of $\Aut(G_\Gamma)$ where $\Gamma$ is not a clique is more complicated, see Proposition \ref{CenterDelta1}. It is known that the center of a Coxeter group is a finite abelian $2$-group. We prove that the center of the automorphism group of a right-angled Coxeter group is in most cases trivial and if the center is non-trivial, then it is also a finite abelian $2$-group.

\begin{CorollaryH}(see Corollary \ref{CenterRACG})
	Let $W_\Gamma$ be a right-angled Coxeter group. Let $\Delta$ be the induced subgraph of $\Gamma$ generated by the vertices $v\in V(\Gamma)$ such that $st(v)=V(\Gamma)$. Further, we denote the vertices of $\Delta$ by $V(\Delta)=\left\{v_1,\ldots, v_n\right\}$ and  $V(\Gamma)-V(\Delta)=\left\{w_1,\ldots, w_m\right\}$.  
	 
	 The center $Z(\Aut(W_\Gamma))$ is non-trivial if and only if $n=1$, $m\geq 1$ and there exists a vertex $w_j\in V(\Gamma)-V(\Delta)$ such that $st(w_j)\nsubseteq st(w_i)$ and $st(w_i)\nsubseteq st(w_j)$ for all $i\in\left\{1,\ldots, m\right\}, i\neq j$. Moreover, let $\Omega$ be a subset of $V(\Gamma)-V(\Delta)$ defined as follows: 
	$$\Omega:=\left\{w_j\mid st(w_j)\nsubseteq st(w_i)\text{ and } st(w_i)\nsubseteq st(w_j) \text{ for all }i\in\left\{1,\ldots, m\right\}, i\neq j\right\}.$$ 
	Then $\Omega$ is preserved under the action of ${\rm Isom}(\Gamma-\Delta)$ and $Z(\Aut(W_\Gamma))\cong (\Z/2\Z)^l$ where $l$ is equal to the cardinality of the set of orbits under this action.
	
\end{CorollaryH}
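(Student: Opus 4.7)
The plan is to deduce this corollary from the general description of $Z(\Aut(G_\Gamma))$ given in Proposition~\ref{CenterDelta1}, specialised to the case when every vertex group equals $\Z/2\Z$. The main task is therefore to interpret the hypotheses of Proposition~\ref{CenterDelta1} in the Coxeter setting and to read off the resulting structure of the centre.

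The key structural input is the short exact sequence
\[
1 \longrightarrow T_\Delta \longrightarrow \Aut(W_\Gamma) \longrightarrow \Aut(W_\Delta)\times\Aut(W_{\Gamma-\Delta}) \longrightarrow 1
\]
together with the identifications $W_\Delta \cong (\Z/2\Z)^n$, $\Aut(W_\Delta)\cong \GL_n(\F_2)$, and $T_\Delta \cong (\Z/2\Z)^{nm}$, where a tuple $(x_1,\ldots,x_m)\in W_\Delta^{m}$ corresponds to the automorphism $w_i\mapsto w_i x_i$. Any central automorphism must project into the centre of $\Aut(W_\Delta)\times\Aut(W_{\Gamma-\Delta})$, and I would exploit the classical fact that $Z(\GL_n(\F_2))=\{1\}$ for every $n\geq 1$ to force the $\Aut(W_\Delta)$-projection of a central element to be trivial.

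Next I would perform a case analysis on $(n,m)$. If $n=0$, or if $n=1$ and $m=0$, direct inspection forces the centre to be trivial. If $n\geq 2$, I would combine the vanishing $\GL_n(\F_2)$-projection with the observation that conjugation in $\Aut(W_\Gamma)$ by a non-identity element of $\GL_n(\F_2)$ acts on $T_\Delta \cong W_\Delta^m$ via the corresponding linear action on each $W_\Delta$-factor, so the only tuple $(x_1,\ldots,x_m)$ fixed by all of $\GL_n(\F_2)$ is trivial; together with triviality of the projection to $Z(\Aut(W_{\Gamma-\Delta}))$ (which reduces inductively via Proposition~\ref{CenterDelta1} to a subcase with strictly smaller $\Delta$), this forces $\phi=id$. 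In the remaining case $n=1$, $m\geq 1$, I would introduce for each $w_j \in V(\Gamma)-V(\Delta)$ the elementary transvection $\phi_j\colon w_j\mapsto w_jv_1$ fixing every other generator, and verify that the products $\Phi_O := \prod_{w_j\in O}\phi_j$ indexed by $\Isom(\Gamma-\Delta)$-orbits $O\subseteq \Omega$ commute with every Laurence--Servatius-type generator of $\Aut(W_\Gamma)$ (graph automorphisms, partial conjugations, dominated transvections $w_k\mapsto w_kw_i$ whenever $st(w_i)\subseteq st(w_k)$), so that $Z(\Aut(W_\Gamma))\cong (\Z/2\Z)^l$.

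The hardest part will be verifying that the incomparability condition defining $\Omega$ is sharp. For sufficiency, I would compute $[\phi_j,\psi]$ when $\psi$ is a Laurence--Servatius generator supported on $\Gamma-\Delta$ and check that the two-sided incomparability of $st(w_j)$ with every other $st(w_i)$ is exactly what forbids such $\psi$ from acting on $w_j$; because $v_1$ is central in $W_\Gamma$ no further interaction terms appear. For necessity, given $w_j \notin \Omega$, I would exhibit an explicit dominated transvection whose commutator with a candidate central element having a non-trivial $x_j$-coordinate is non-trivial. That $\Omega$ is $\Isom(\Gamma-\Delta)$-invariant follows since a graph isometry of $\Gamma$ preserves $\Delta$ and sends $st(w_i)$ setwise to $st(\tau(w_i))$, so the incomparability condition is a graph invariant.
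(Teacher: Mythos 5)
Your overall route coincides with the paper's: dispose of the cases $V(\Delta)=\emptyset$ (Lemma \ref{center}), $m=0$, and $n\geq 2$ using the split sequence $1\to T_\Delta\to\Aut(W_\Gamma)\to\Aut(W_\Delta)\times\Aut(W_{\Gamma-\Delta})\to 1$ together with $Z(\GL_n(\Z/2\Z))=\{1\}$ (the paper packages this as Proposition \ref{Injective}(ii)), and then, for $n=1$, $m\geq 1$, locate the center inside $T_\Delta\cong(\Z/2\Z)^m$ and test candidates against the generators of Theorem \ref{GenSet}; this is exactly Proposition \ref{CenterDelta1}(iii)(a). Two small remarks: triviality of the $\Aut(W_{\Gamma-\Delta})$-component needs no induction, since $W_{\Gamma-\Delta}$ is centerless, hence $\Aut(W_{\Gamma-\Delta})$ is centerless by Lemma \ref{center} and surjectivity of $\pi_2$ finishes it; and for $n\geq2$ your fixed-point argument on $T_\Delta\cong (W_\Delta)^m$ handles only central elements inside $T_\Delta$, so you still need $Z(\GL_n(\Z/2\Z))=\{1\}$ to exclude the rest.

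The genuine gap sits precisely in the one step you defer, the claim that two-sided star-incomparability is ``exactly'' the commutation condition. Carrying out the computation: for $f\in T_\Delta$ with $f(w)=wv_1^{\epsilon_w}$ and a dominated transvection $\rho_{ab}\colon a\mapsto ab$ (well-defined iff $st(a)\subseteq st(b)$), one finds $f\circ\rho_{ab}(a)=ab\,v_1^{\epsilon_a+\epsilon_b}$ while $\rho_{ab}\circ f(a)=ab\,v_1^{\epsilon_a}$, so the obstruction is $\epsilon_b=1$, i.e.\ $b$ lies in the support of $f$. That is a one-sided condition on stars (no other star contained in $st(b)$), not the symmetric condition defining $\Omega$. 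Consequently your necessity step --- for $w_j\notin\Omega$, exhibit a transvection not commuting with a candidate whose $w_j$-coordinate is nontrivial --- does not go through when $w_j$ fails membership in $\Omega$ only because $st(w_j)\subseteq st(w_i)$ for some $i$: the transvection $\rho_{w_jw_i}$ this supplies commutes with $f$ whenever $w_i$ lies outside the support. (Concretely, for $\Gamma$ the cone over the path $w_1-w_2-w_3-w_4$ the set $\Omega$ is empty, yet the automorphism $w_1\mapsto w_1v_1$, $w_4\mapsto w_4v_1$, identity elsewhere, commutes with every generator of Theorem \ref{GenSet}.) So the combinatorial characterization you announce is not the one your own computation will produce, and the plan's central verification does not close as stated; note also that the paper's corresponding check in Proposition \ref{CenterDelta1}(iii)(a) only treats transvections with both vertices in the support, so it cannot simply be cited to fill this hole.
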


\subsection{Automatic continuity}
The results of Theorem G have an application to automatic continuity.
The aim of the concept of automatic continuity is to bring together two areas of mathematics: locally compact Hausdorff groups and discrete groups. The main question for us in this direction is:
\begin{Question}
Is every algebraic epimorphism $\varphi\colon L\twoheadrightarrow \Aut(G_\Gamma)$ from a locally compact Hausdorff group $L$ to the discrete group $\Aut(G_\Gamma)$ continuous?
\end{Question}
A direct consequence of \cite[Thm. B]{KramerVarghese} is that any algebraic epimorphism $\varphi\colon L\twoheadrightarrow G_\Gamma$ from a locally compact Hausdorff group $L$ into a graph product of cyclic groups $G_\Gamma$ is continuous if the diameter of $\Gamma$ is at least $3$. Using \cite[Thm. B]{KeppelerMoellerVarghese} we obtain a similar result where the target group is the automorphism group of a graph product of cyclic groups.

\begin{CorollaryI}(see Corollaries \ref{automaticAutG} and  \ref{automaticRAAG})
	Let $\varphi\colon L\twoheadrightarrow{\rm Aut}(G_\Gamma)$ be an algebraic epimorphism from a locally compact Hausdorff group $L$ to the automorphism group of a graph product of cyclic groups $G_\Gamma$. If $\Aut(G_\Gamma)$ does not have non-trivial finite normal subgroups, then $\varphi$ is continuous.
	
	In particular, if $V(\Delta)=\emptyset$ or  $V(\Delta)$ and $V(\Gamma)-V(\Delta)$ are both non-empty and all vertex groups in $V(\Delta)$ are infinite cyclic and there exists an infinite vertex group in $V(\Gamma)-V(\Delta)$, then $\varphi$ is continuous.
	
	Moreover, any algebraic epimorphism from a locally compact Hausdorff group to $\Aut(A_\Gamma)$ is continuous if and only if $\Gamma$ is not a clique.
\end{CorollaryI}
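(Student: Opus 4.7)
The plan is to deduce the three parts of the corollary in sequence, all resting on \cite[Thm. B]{KeppelerMoellerVarghese} together with results established earlier in the paper.

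For the first assertion, the statement is essentially a direct application of \cite[Thm. B]{KeppelerMoellerVarghese}, which (as alluded to in the introduction) provides automatic continuity for an algebraic epimorphism $L \twoheadrightarrow \Aut(G_\Gamma)$ from a locally compact Hausdorff group $L$ onto $\Aut(G_\Gamma)$ whenever the latter contains no non-trivial finite normal subgroup; so no additional work beyond quoting that theorem is required. For the ``in particular'' clause, I would observe that the two listed hypotheses correspond precisely to conditions (1) and (4) of Theorem G (using that an infinite cyclic group must be $\Z$). Thus Theorem G yields that $\Aut(G_\Gamma)$ has no non-trivial finite normal subgroup, and the conclusion follows by invoking the first assertion.

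For the final ``moreover'' clause about $\Aut(A_\Gamma)$, the forward implication is immediate: if $\Gamma$ is not a clique, then by Corollary C the group $\Aut(A_\Gamma)$ has no non-trivial finite normal subgroup, so the first assertion yields continuity. For the converse I need to construct, when $\Gamma$ is an $n$-clique, a discontinuous algebraic epimorphism $\varphi\colon L \twoheadrightarrow \GL_n(\Z) \cong \Aut(A_\Gamma)$ with $L$ locally compact Hausdorff. I would exploit the non-trivial central subgroup $\{\pm I\} \leq \GL_n(\Z)$ as follows. Put $K := (\Z/2\Z)^{\mathbb{N}}$ with its product topology; as an abstract $\mathbb{F}_2$-vector space $K$ has uncountable dimension, so by choosing an $\mathbb{F}_2$-Hamel basis one obtains a linear functional $\chi\colon K \to \Z/2\Z$ that is not continuous. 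Let $L := K \times \GL_n(\Z)$ with $K$ in its compact topology and $\GL_n(\Z)$ discrete (so $L$ is locally compact Hausdorff), and define
$$ \varphi(k, g) := (-I)^{\chi(k)} \cdot g.$$
Centrality of $\{\pm I\}$ in $\GL_n(\Z)$ makes $\varphi$ a group homomorphism, $\varphi(0, g) = g$ gives surjectivity, and discontinuity of $\chi$ at $0$ produces a sequence $k_j \to 0$ in $K$ with $\chi(k_j) = 1$ for all $j$, whence $\varphi(k_j, I) = -I \ne I = \varphi(0, I)$ in the discrete topology on $\GL_n(\Z)$, proving $\varphi$ is discontinuous.

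The main obstacle is this last construction: it relies on a careful choice of compact group $K$ admitting a discontinuous character to $\Z/2\Z$. One cannot replace $K$ by $S^1$ here, since $S^1$ is divisible and hence admits no non-trivial homomorphism to $\Z/2\Z$ whatsoever; some 2-torsion in $K$ is essential. The first two parts, by contrast, are essentially bookkeeping: one simply combines the cited automatic continuity theorem with Theorem G and Corollary C.
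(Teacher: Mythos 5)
Your reduction of the ``in particular'' clause to conditions (i) and (iv) of Theorem G is correct, and your construction of the discontinuous epimorphism onto $\GL_n(\Z)$ in the clique case is essentially identical to the paper's (Lemma \ref{discCenter} combined with the discontinuous character $\prod_{\mathbb{N}}\Z/2\Z\to\Z/2\Z\to\{I,-I\}$ in Corollary \ref{automaticRAAG}), including your correct observation that the $2$-torsion in the compact factor is what makes the construction work.

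The gap is in your first assertion. The theorem you are quoting, \cite[Thm.\ B]{KeppelerMoellerVarghese} as recorded in Theorem \ref{autTheorem}, does \emph{not} say that every epimorphism onto a discrete group without non-trivial finite normal subgroups is continuous; it requires two further hypotheses on the target $G$: every torsion subgroup of $G$ must be finite, and $G$ must contain neither $\mathbb{Q}$ nor the $p$-adic integers $\Z_p$ as a subgroup. So ``no additional work beyond quoting that theorem'' is not accurate: one has to verify these hypotheses for $\Aut(G_\Gamma)$, and this is where most of the actual content of the paper's proof lies. Concretely, the paper establishes that torsion subgroups of $\Aut(G_\Gamma)$ are finite (Corollary \ref{TorsionFinite}, which rests on the strong Tits alternative for $\Out(G_\Gamma)$ due to Sale--Susse and on the structure of torsion subgroups of $G_{\Gamma-\Delta}$), that $\Aut(G_\Gamma)$ contains no copy of $\mathbb{Q}$ (Lemma \ref{noQ}, via linearity and residual finiteness of $G_\Gamma$ and Baumslag's theorem), and that $\Aut(G_\Gamma)$ is finitely generated (Theorem \ref{GenSet}), hence countable, hence cannot contain the uncountable group $\Z_p$. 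None of these facts is immediate, and without them the first assertion --- and therefore everything you build on it --- does not follow. Once these verifications are supplied, the remainder of your argument goes through and coincides with the paper's.
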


\subsection{The stable rank of the reduced group C*-algebra of the automorphism group of a graph product}
Associated to a group $G$, there is an associated reduced group $C^\ast$-algebra $C^\ast_r(G)$. We say that $C^\ast_r(G)$ has \textit{stable rank} $1$ if the set of invertible elements is dense in $C^\ast_r(G)$. Additional information about this in the context of acylindrically hyperbolic groups can be found in \cite{GerasimovaOsin}.
Proposition E and Theorem G have an application to $C^\ast_r(\Aut(G_\Gamma))$-algebras, more precisely using work of Gerasimova-Osin \cite{GerasimovaOsin} and Genevois \cite{Genevois} we obtain
\begin{CorollaryJ}
Let $G_\Gamma$ be a graph product of finitely generated groups. 
If $\Gamma$ is not a join, contains at least two vertices and $G_\Gamma\ncong\Z/2\Z*\Z/2\Z$, then the stable rank of the reduced group $C^\ast$-algebra of $\Aut(G_\Gamma)$ is equal to $1$. 

In particular, if $G_\Gamma$ is a right-angled Artin group and $\Gamma$ is not a join and contains at least two vertices, then the stable rank of the reduced group $C^\ast$-algebra of $\Aut(G_\Gamma)$ is equal to $1$.
\end{CorollaryJ}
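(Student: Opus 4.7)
The plan is to combine three ingredients already assembled or cited in the excerpt: Proposition E, Genevois's result on acylindrical hyperbolicity of $\Aut(G_\Gamma)$, and the Gerasimova--Osin criterion that an acylindrically hyperbolic group whose finite radical is trivial has reduced $C^\ast$-algebra of stable rank one.

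The first step is to show that $\Aut(G_\Gamma)$ has trivial finite radical under the stated hypotheses. Since $\Gamma$ is not a join, the direct-product decomposition $G_\Gamma\cong G_{\Gamma_1}\times\ldots\times G_{\Gamma_n}$ into directly indecomposable special parabolic subgroups appearing in Proposition E must be trivial, i.e.\ $n=1$ and $G_{\Gamma_1}=G_\Gamma$. The hypotheses $|V(\Gamma)|\geq 2$ and $G_\Gamma\ncong \Z/2\Z\ast\Z/2\Z$ then match the assumptions of Proposition E~(i), yielding that every non-trivial normal subgroup of $\Aut(G_\Gamma)$ is full-sized. Since a finite group cannot contain $F_2$, this forces the finite radical of $\Aut(G_\Gamma)$ to be trivial.

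The second step is to invoke Genevois's theorem: when $\Gamma$ is not a join, has at least two vertices, and $G_\Gamma\ncong \Z/2\Z\ast\Z/2\Z$, with finitely generated vertex groups, $\Aut(G_\Gamma)$ is acylindrically hyperbolic. Combining this with the first step and the Gerasimova--Osin theorem finishes the proof of the main statement. For the particular case of a right-angled Artin group $A_\Gamma$ on a graph that is not a join and has at least two vertices, note that such a $\Gamma$ is automatically not a clique, so Corollary C already gives triviality of the finite radical of $\Aut(A_\Gamma)$, and moreover $A_\Gamma$ cannot be isomorphic to $\Z/2\Z\ast\Z/2\Z$ since all vertex groups are infinite cyclic; hence the general conclusion applies verbatim.

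The main obstacle I anticipate is bookkeeping rather than new ideas: one must verify that the precise statement of Genevois's acylindrical hyperbolicity theorem for $\Aut(G_\Gamma)$ covers all the vertex-group configurations allowed by our hypothesis (finitely generated, but otherwise arbitrary), and check that no additional ``tame'' assumption is hidden in the cited version. If an extra mild hypothesis is required, it can be absorbed either into the formulation or handled by a short separate argument showing that the relevant Davis--Januszkiewicz-type complex associated to $\Aut(G_\Gamma)$ admits a WPD element, using that $\Gamma$ being not a join supplies sufficient ``hyperbolic directions'' in the outer space-like object on which $\Aut(G_\Gamma)$ acts.
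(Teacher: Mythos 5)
Your proposal is correct and follows essentially the same route as the paper: Genevois's theorem for acylindrical hyperbolicity of $\Aut(G_\Gamma)$, the Gerasimova--Osin criterion reducing stable rank one to triviality of the amenable (equivalently, for acylindrically hyperbolic groups, the maximal finite normal) radical, and Proposition E applied to the trivial direct decomposition forced by $\Gamma$ not being a join. The only cosmetic difference is that you handle the right-angled Artin case via Corollary C, while the paper simply lets it follow from the general statement; both are fine.
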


\subsection*{Structure of the paper} In Chapter 2 we review some of the standard facts on graph products of groups $G_\Gamma$. It is natural to try to relate combinatorial properties of the defining graph $\Gamma$ and algebraic properties of the graph product $G_\Gamma$. We discuss these relations focusing on the question when a special subgroup of a graph product can be normal and when a graph product of groups can have a non-trivial non full-sized normal subgroup. These results allow us to prove Proposition A and Proposition E(i). 

Chapter 3 contains a brief summary of characteristic subgroups and some examples. In particular, we investigate the special subgroup $G_\Delta$ of $G_\Gamma$, where $V(\Delta)=\left\{v\in V(\Gamma)\mid st(v)=V(\Gamma)\right\}$ and give a condition on the vertex groups so that $G_\Delta$ is characteristic. We also consider a canonical map induced by the fact that $G_\Delta$ is characteristic and compute the kernel of this map which leads to the proof of Proposition E(ii).

We start Chapter 4 with a brief exposition of different types of automorphisms of a graph product whose vertex groups are cyclic. This chapter also contains a proof for the fact that torsion subgroups in $\Aut(G_\Gamma)$ are finite and $\Aut(G_\Gamma)$ does not contain a subgroup isomorphic to $\mathbb{Q}$. These two properties are important for the application to automatic continuity.

Chapter 5 contains the proof of Theorem B and the proofs of Corollaries C and D. Chapter 6 is dedicated  to the proof of Theorem G.

We start Chapter 7 with some known facts about the center of the automorphism group and move on to the calculation of the center of the automorphism group of a graph product of cyclic groups. 

The last chapter, Chapter 8, provides applications of Theorem G to automatic continuity and to reduced group $C^\ast$-algebras of automorphism groups of graph products.

\subsection*{Acknowledgments:}
We would like to thank Samuel M. Corson, Dominic Enders and Linus Kramer for
interesting discussions and remarks concerning automatic continuity. Additionally we would like to thank Daniel Keppeler, Luis Paris and Petra Schwer for many useful comments and remarks on an earlier version of this article. On top of that we want to thank Maria Gerasimova, Tim de Laat and Hannes Thiel for very helpful discussions regarding $C^\ast$-algebras.
\section{Graph products of groups}
In this section we first recall the basics of graph products of groups. This includes their definition, parabolic subgroups and results about subgroups of directly indecomposable graph products. We define what the term \textit{full-sized} means and then study normal subgroups of graph products and of their automorphism groups with regard to being finite or full-sized. To finish off the section we prove Proposition A.

In group theory it is common to build new groups out of given ones using free or direct product constructions. Further, we can also use amalgamation to obtain a new group out of given ones. For example,  $\Z/4\Z*\Z/6\Z$ is an infinite group, $\Z/4\Z\times \Z/6\Z$ is a finite abelian group and $\Z/4\Z*_{\Z/2\Z} \Z/6\Z\cong{\rm SL}_2(\Z)$. Graph products generalize  free/direct products and special kind of amalgamations. We start by recalling the definition of a graph product of groups.

\begin{definition}
Given a finite simplicial graph $\Gamma$ and a collection of groups $\mathcal{G} = \{ G_u \mid u \in V(\Gamma) \}$ indexed by the vertex-set $V(\Gamma)$ of $\Gamma$, the \emph{graph product} $G_\Gamma$ is defined as the quotient
$$ \left( \underset{u \in V(\Gamma)}{\ast} G_u \right) / \langle \langle [g,h]=1, \ g \in G_u, h \in G_v, \{u,v\} \in E(\Gamma) \rangle \rangle$$
where $E(\Gamma)$ denotes the edge-set of $\Gamma$.
\end{definition}

$ $\\{\bf Convention:} All vertex groups are always non-trivial. If a vertex group $G_v$ is cyclic for a vertex $v$, we also write $v$ for a generator of $G_v$ by slight abuse of notation.

\vspace{0.5cm}
Graph products of groups were introduced and studied by Green in her PhD Thesis \cite{Green}. If the vertex groups are all cyclic of the same type, then it is common to use another name for the graph product.
\begin{definition}
	Let $G_\Gamma$ be a graph product.
\begin{enumerate}
	\item If all vertex groups are infinite cyclic, then $G_\Gamma$ is called a \emph{right-angled Artin group} which we denote by $A_\Gamma$.
	\item If all vertex groups are cyclic of order $2$, then $G_\Gamma$ is called a \emph{right-angled Coxeter group} which we denote by $W_\Gamma$.
\end{enumerate}
\end{definition}

Given a simplicial graph $\Gamma$ and a vertex $v\in V(\Gamma)$ we define the \emph{link of $v$} as follows $lk(v):=\left\{w\in V(\Gamma)\mid \left\{v, w\right\}\in E(\Gamma)\right\}$ and the \emph{star of $v$} is defined as $st(v):=\left\{v\right\}\cup lk(v)$. The valency of $v$ is defined as $val(v):=|\left\{w\in V(\Gamma)\mid \left\{v,w\right\}\in E(\Gamma)\right\}|$. A subgraph 
$\Omega\subseteq\Gamma$ is called \emph{induced} if for all pair of vertices $(v,w)\in V(\Omega)\times V(\Omega)$ we have $\left\{v,w\right\}\in E(\Omega)$ if and only if $\left\{v,w\right\}\in E(\Gamma)$. 

Given a graph product $G_\Gamma$ and an induced subgraph $\Omega\subseteq\Gamma$, then the subgroup $\langle G_v\mid v\in V(\Omega)\rangle\subseteq G_\Gamma$ is canonically isomorphic to $G_\Omega$ (see \cite[Lemma 3.20]{Green}) and is called a \emph{special parabolic subgroup}.  Further, a conjugate of a special parabolic subgroup is called a \emph{parabolic subgroup}. 

By definition, a graph product $G_\Gamma$ is called \emph{directly decomposable} if $G_\Gamma$ is a direct product of proper special parabolic subgroups of $G_\Gamma$. For example, if $\Gamma$ is a cycle of length $n\geq 3$, then $G_\Gamma$ is directly decomposable if and only if $n=3$ or $n=4$. 

Further, a graph $\Gamma$ is a \emph{join} if there exists a partition $V(\Gamma)=A\cup B$, where $A$
and $B$ are both non-empty, such that for every  $(a,b)\in A\times B$  we have $\left\{a,b\right\}\in E(\Gamma)$. If $\Gamma$ is a join, then we write $\Gamma=\Gamma_1*\Gamma_2$ where $\Gamma_1$ is the induced subgraph by $A$ and $\Gamma_2$ is the induced subgraph by $B$. Hence, $G_\Gamma$ is directly decomposable if and only if $\Gamma$ is a join and then $G_\Gamma\cong G_{\Gamma_1}\times G_{\Gamma_2}$.
\vspace{0.5cm}

Since the main focus of this article is on normal subgroups, we  ask the following question:
Let $G_\Gamma$ be a graph product of arbitrary groups and $G_\Omega$ be a special parabolic subgroup. Under which combinatorial conditions on the graph $\Gamma$ is $G_\Omega$ normal in $G_\Gamma$?
The answer to this question was given by Antolin and Minasyan in \cite{AntolinMinasyan}. They showed, that $G_\Omega$ is normal in $G_\Gamma$ if and only if $G_\Omega$ is a direct factor in a direct decomposition of $G_\Gamma$ in special parabolic subgroups (see \cite[Prop. 3.13]{AntolinMinasyan}). In particular, $G_\Omega\trianglelefteq G_\Gamma$ if and only if $V(\Omega)\subseteq lk(v)$ for all $v\in V(\Gamma)-V(\Omega)$.

Given a non-trivial subgroup $H\subseteq G_\Gamma$ one can consider parabolic subgroups $gG_\Omega g^{-1}$ that contain $H$ as a subgroup. The natural question regarding these parabolic subgroups is if their intersection is again parabolic.
\begin{proposition}(\cite[Prop. 3.10]{AntolinMinasyan})
    Let $G_\Gamma$ be a graph product. For every non-trivial subgroup $H\subseteq G_\Gamma$ there exists a unique minimal (with respect to inclusion) parabolic subgroup $gG_\Omega g^{-1}$ such that $H\subseteq gG_\Omega g^{-1}$. This parabolic subgroup is called the parabolic closure of $H$ and is denoted by $\PC(H)$.
\end{proposition}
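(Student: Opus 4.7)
The plan is to exhibit $\PC(H)$ as a parabolic subgroup of minimal \emph{rank} among those containing $H$, and then derive uniqueness from an intersection lemma. For a parabolic subgroup $P = gG_\Omega g^{-1}$, I define its rank to be $|V(\Omega)|$; this is well-defined (independent of the particular choice of $g$ and $\Omega$), because in a graph product the defining induced subgraph of a parabolic subgroup is determined by the subgroup itself — a consequence of Green's normal form theorem for graph products.

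The heart of the argument is the following claim: \emph{the intersection of two parabolic subgroups of $G_\Gamma$ is again a parabolic subgroup}. To prove this I would use Green's normal form, which to each element $g \in G_\Gamma$ associates a well-defined \emph{support} (the set of vertices whose vertex groups contribute syllables to any reduced expression of $g$). Given parabolic subgroups $g_1 G_{\Omega_1} g_1^{-1}$ and $g_2 G_{\Omega_2} g_2^{-1}$, I would first simplify the coset representatives $g_i G_{\Omega_i}$ by pushing any syllables of $g_i$ supported on $\Omega_i$ across, so that each $g_i$ has shortest possible length among its coset representatives. Any element of the intersection, written in normal form in two ways, must then have syllables supported on a common induced subgraph $\Omega' \subseteq \Omega_1 \cap \Omega_2$ (after accounting for the conjugations), identifying the intersection as a parabolic subgroup of type $\Omega'$.

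The principal obstacle is the bookkeeping in this second paragraph: one has to carefully track how syllables can commute across the edges of $\Gamma$ when writing the same element in two different normal forms. This is the technical core of the argument of Antolin--Minasyan and is what forces them to phrase things in terms of geodesics in the associated Davis--Salvetti cube complex. With the intersection lemma in hand, the proposition follows quickly. The collection $\mathcal{P}(H)$ of parabolic subgroups containing $H$ is non-empty (it contains $G_\Gamma$ itself) and ranks are bounded above by $|V(\Gamma)|$, so I pick $P_0 \in \mathcal{P}(H)$ of minimal rank. For any $P \in \mathcal{P}(H)$ the subgroup $P \cap P_0$ is parabolic, contains $H$, and sits inside $P_0$; since a strict containment of parabolic subgroups strictly increases the rank (another direct consequence of the normal form, essentially because a proper subgraph of $\Omega$ yields a proper subgroup of $G_\Omega$), minimality of $P_0$ forces $P \cap P_0 = P_0$, i.e.\ $P_0 \subseteq P$. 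Hence $P_0$ is the unique minimum of $\mathcal{P}(H)$, and we set $\PC(H) := P_0$.
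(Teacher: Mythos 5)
This proposition is not proved in the paper at all: it is imported verbatim from Antolin--Minasyan \cite[Prop.\ 3.10]{AntolinMinasyan}, so there is no in-paper argument to compare against. Judged on its own terms, your outline follows the standard (and essentially the Antolin--Minasyan) strategy --- reduce existence and uniqueness of the parabolic closure to the fact that intersections of parabolic subgroups are parabolic, then minimize rank --- and the final deduction from that lemma is correct: $P\cap P_0$ is parabolic, contains $H$, sits inside $P_0$, and minimality of rank forces $P\cap P_0=P_0$.

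The problem is that the load-bearing step is not actually carried out. You state the intersection lemma, sketch how one might push syllables around in Green's normal form, and then explicitly concede that the bookkeeping ``is the technical core of the argument of Antolin--Minasyan'' without doing it; as written, the proof of the proposition has been replaced by a proof-relative-to-an-unproven-lemma, which is a genuine gap rather than a stylistic omission. Two auxiliary claims are also asserted with inadequate justification. First, well-definedness of rank (that $g_1G_{\Omega_1}g_1^{-1}=g_2G_{\Omega_2}g_2^{-1}$ forces $|V(\Omega_1)|=|V(\Omega_2)|$) is itself a non-trivial consequence of the normal form theory. Second, and more seriously, your justification that strict containment of parabolics strictly increases rank --- ``a proper subgraph of $\Omega$ yields a proper subgroup of $G_\Omega$'' --- only handles the case where one defining subgraph is literally contained in the other; it does not rule out a proper containment $gG_\Omega g^{-1}\subsetneq hG_{\Omega'}h^{-1}$ with $|V(\Omega)|=|V(\Omega')|$, which requires showing both that containment of parabolics forces a containment of types and that $gG_\Omega g^{-1}\subseteq G_\Omega$ implies equality (a statement that fails for general subgroups of general groups and needs the normalizer/normal-form analysis for parabolics). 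All of these facts are true for graph products, but each one is exactly the kind of statement the cited machinery of \cite{AntolinMinasyan} exists to establish, so the proposal as it stands does not constitute an independent proof.
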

Let us consider the right-angled Coxeter group $W_\Gamma$ defined by the graph $\Gamma$ in Figure 2.

\begin{figure}[h]
	\begin{center}
		\begin{tikzpicture}
			\draw[fill=black]  (0,0) circle (3pt);
			\node at (0,-0.4) {$v_1$};
			\draw[fill=black]  (2,0) circle (3pt);
			\node at (2,-0.4) {$v_2$};
			\draw[fill=black]  (4,0) circle (3pt);
			\node at (4,-0.4) {$v_3$};
			\draw (0,0)--(2,0);
			\draw (2,0)--(4,0);
\end{tikzpicture}
\caption{Graph $\Gamma$.}
\end{center}
\end{figure}
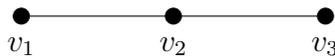
We have $W_\Gamma\cong(\langle v_1\rangle*\langle v_3\rangle)\times\langle v_2\rangle\cong (\Z/2\Z*\Z/2\Z)\times\Z/2\Z$. Since the infinite cyclic subgroup $\langle v_1v_3\rangle$ is normal in $\langle v_1\rangle *\langle v_3\rangle$, we know that $\langle v_1v_3\rangle$ is also normal in the entire Coxeter group $W_\Gamma$. We note also that $\PC(\langle v_1v_3\rangle)=\langle v_1\rangle*\langle v_3\rangle$ and furthermore $\PC(\langle v_1v_3\rangle)$ is also normal in $W_\Gamma$. In general, we have

\begin{lemma}
	\label{PCNormal}
	Let $G_\Gamma$ be a graph product and $N\subseteq G_\Gamma$ be a subgroup. If $N\trianglelefteq G_\Gamma$ is normal, then the parabolic closure $\PC(N)\trianglelefteq G_\Gamma$ is normal and $\PC(N)$ is a direct factor in a direct decomposition of $G_\Gamma$ in special parabolic subgroups.
	
	In particular, if $G_\Gamma$ is directly indecomposable, then $\PC(N)=G_\Gamma$.
\end{lemma}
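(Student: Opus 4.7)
The plan is to prove the lemma in three steps: first establish normality of $\PC(N)$ itself, then upgrade ``parabolic'' to ``special parabolic'', and finally invoke the Antolin--Minasyan criterion (\cite[Prop.~3.13]{AntolinMinasyan}, already cited just above the lemma) to conclude the direct factor statement.

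For normality I would exploit the minimality clause in the definition of $\PC(N)$. Fix $h\in G_\Gamma$; since conjugates of parabolic subgroups are by definition parabolic, $h\PC(N)h^{-1}$ is again a parabolic subgroup of $G_\Gamma$, and because $N\trianglelefteq G_\Gamma$ it contains $hNh^{-1}=N$. The uniqueness/minimality of $\PC(N)$ among parabolic subgroups containing $N$ then forces $\PC(N)\subseteq h\PC(N)h^{-1}$. Applying the same reasoning with $h^{-1}$ in place of $h$ yields the reverse inclusion, so $h\PC(N)h^{-1}=\PC(N)$ for every $h$, i.e.\ $\PC(N)\trianglelefteq G_\Gamma$.

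Next, write $\PC(N)=gG_\Omega g^{-1}$ for some $g\in G_\Gamma$ and some induced subgraph $\Omega\subseteq\Gamma$. Normality of $\PC(N)$, applied to the particular element $g^{-1}$, gives
\[
G_\Omega \;=\; g^{-1}\bigl(gG_\Omega g^{-1}\bigr)g \;=\; g^{-1}\PC(N)g \;=\; \PC(N),
\]
so $\PC(N)$ is in fact a \emph{special} parabolic subgroup of $G_\Gamma$. Now I invoke the Antolin--Minasyan characterization stated in the text preceding the lemma: a special parabolic subgroup $G_\Omega$ is normal in $G_\Gamma$ if and only if $V(\Omega)\subseteq lk(v)$ for every $v\in V(\Gamma)-V(\Omega)$, equivalently $\Gamma=\Omega*(\Gamma-\Omega)$ and $G_\Gamma\cong G_\Omega\times G_{\Gamma-\Omega}$. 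This gives the first assertion. For the ``in particular'' part, recall that $\PC(N)$ is only defined when $N\neq\{1\}$, hence $\PC(N)$ is a nontrivial direct factor in a direct decomposition of $G_\Gamma$ into special parabolic subgroups; if $G_\Gamma$ is directly indecomposable this forces $\PC(N)=G_\Gamma$.

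There is no real obstacle here: the proof is short and entirely built on the cited machinery. The only conceptual point is the trick of conjugating the presentation $\PC(N)=gG_\Omega g^{-1}$ by $g^{-1}$, which is where normality is used essentially to upgrade ``parabolic'' to ``special parabolic'' and thereby make the Antolin--Minasyan criterion applicable.
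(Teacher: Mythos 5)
Your proof is correct, and it reaches the conclusion by the same overall skeleton as the paper's proof (show $\PC(N)$ is normal, then apply \cite[Prop.~3.13]{AntolinMinasyan}), but the normality step is handled differently. The paper simply cites \cite[Lemma 3.12]{AntolinMinasyan}, which states that the normalizer of a subgroup $H$ is contained in the normalizer of $\PC(H)$; since $N$ is normal its normalizer is all of $G_\Gamma$, and normality of $\PC(N)$ follows at once. You instead give a self-contained argument from the uniqueness/minimality clause in the definition of the parabolic closure: $h\PC(N)h^{-1}$ is a parabolic subgroup containing $hNh^{-1}=N$, so minimality applied to $h$ and to $h^{-1}$ forces $h\PC(N)h^{-1}=\PC(N)$. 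This is in effect an elementary proof of exactly the special case of \cite[Lemma 3.12]{AntolinMinasyan} that is needed, so it buys independence from that citation at the cost of a few lines. Your intermediate step upgrading $\PC(N)$ from parabolic to \emph{special} parabolic (conjugating the presentation $gG_\Omega g^{-1}$ by $g^{-1}$) is a useful clarification that the paper leaves implicit, since the normality criterion as quoted before the lemma is phrased for special parabolic subgroups; the paper absorbs this point into its appeal to \cite[Prop.~3.13]{AntolinMinasyan}. Both routes are sound and yield the same statement.
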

\begin{proof}
	First, we remark that the normalizer of a subgroup $H$ in $G_\Gamma$ is contained in the normalizer of the parabolic closure of $H$, see \cite[Lemma 3.12]{AntolinMinasyan}. 
	
	Thus, given a normal subgroup $N\trianglelefteq G_\Gamma$ we know that $\PC(N)\trianglelefteq G_\Gamma$ is normal. Further, Proposition 3.13 in \cite{AntolinMinasyan} tells us that $\PC(N)$ is a direct factor of $G_\Gamma$. More precisely, $G_\Gamma=\PC(N)\times G_\Omega$, where $G_\Omega$ is a special parabolic subgroup of $G_\Gamma$.
\end{proof}

In the special case where $G_\Gamma$ is directly indecomposable we have a description of all subgroups of $G_\Gamma$. In particular of those subgroups without $F_2$ as a subgroup. 
\begin{proposition}\cite[Thm. 4.1]{AntolinMinasyan}
	\label{TitsAlternative}
	Let $G_\Gamma$ be a graph product of arbitrary groups and $H\subseteq G_\Gamma$ be a subgroup. If $G_\Gamma$ is directly indecomposable and $|V(\Gamma)|\geq 2$, then 
	\begin{enumerate}
		\item $H$ is contained in a proper parabolic subgroup or
		\item $H\cong\Z$ or
		\item $H\cong \Z/2\Z*\Z/2\Z$ or
		\item $H$ has a subgroup isomorphic to $F_2$.
	\end{enumerate}
\end{proposition}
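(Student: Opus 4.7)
The plan is to use the parabolic closure introduced just above the statement. If $\PC(H)$ is a proper parabolic subgroup of $G_\Gamma$, then $H\subseteq\PC(H)$ and case (i) holds, so I would assume throughout that $\PC(H)=G_\Gamma$. Under this assumption I need to show that $H$ falls into (ii), (iii) or (iv). Because $G_\Gamma$ is directly indecomposable and $|V(\Gamma)|\geq 2$, the defining graph $\Gamma$ is not a join, so there exist two non-adjacent vertices $u,v\in V(\Gamma)$; in particular $G_u\ast G_v$ embeds as a special parabolic subgroup witnessing that $G_\Gamma$ is far from abelian.

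The next step is to study the action of $G_\Gamma$ on its associated \CAT{} cube complex (the Davis/Salvetti complex). Every element of $G_\Gamma$ is either elliptic, in which case it fixes a cube and is easily seen to lie in a proper parabolic subgroup, or hyperbolic with a combinatorial axis. Since $\PC(H)=G_\Gamma$, the subgroup $H$ cannot consist only of elliptic elements all sharing a common fixed cube (else $\PC(H)$ would be proper), hence $H$ contains a hyperbolic element $g$. I would then aim to produce a second hyperbolic element $h\in H$ whose axis has endpoints at infinity disjoint from those of $g$, and apply a ping-pong argument to sufficiently high powers $g^N,h^N$ to obtain $F_2\hookrightarrow H$, giving case (iv).

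The remaining case is that every hyperbolic element of $H$ shares a common unordered pair of endpoints at infinity with $g$. Then $H$ preserves this two-point set, so it fits into a short exact sequence $1\to H_0\to H\to Q\to 1$, where $H_0$ is the cyclic translation subgroup on the common axis and $Q\leq\Z/2\Z$. A quick case analysis on whether the outer involution acts trivially or by inversion yields $H\cong\Z$ (case (ii)) or $H\cong\Z/2\Z\ast\Z/2\Z$ (case (iii)). I expect the main obstacle to be the extraction of two hyperbolic elements with independent axes from $\PC(H)=G_\Gamma$: one must use the normal form for graph products together with the non-join assumption to prevent $H$ from being trapped on a single axis, and this is the delicate core of the Antolin--Minasyan argument. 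An alternative, more algebraic route would be induction on $|V(\Gamma)|$, with base case $\Gamma$ the edgeless graph on two vertices (where $G_\Gamma=G_u\ast G_v$ and the conclusion follows from Kurosh's subgroup theorem), but the inductive step for graph products that are neither free nor direct products would require essentially the same ping-pong input.
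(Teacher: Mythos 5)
First, note that the paper does not prove this proposition at all: it is quoted directly from Antolin--Minasyan \cite[Thm.~4.1]{AntolinMinasyan}, so there is no in-paper argument to compare yours against. The closest the paper comes is the proof of Proposition \ref{NormalGraphProduct}, which, for \emph{normal} subgroups only, runs a related elliptic/hyperbolic analysis on the Bass--Serre tree of the splitting $G_\Gamma\cong G_{\Gamma_1}\ast_{G_{\Gamma_3}}G_{\Gamma_2}$ rather than on the Davis complex. Your outline has the right overall shape for a Tits alternative, and reducing to the case $\PC(H)=G_\Gamma$ is the correct first move, but as a proof it has three genuine gaps.

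(1) The dichotomy ``either all elements of $H$ are elliptic with a common fixed cube, or $H$ contains a hyperbolic element'' is not valid as stated: a group can consist entirely of elliptic isometries and still have no global fixed point. Extracting a hyperbolic element requires finite dimensionality of the complex (Helly for finitely generated subgroups, plus an analysis of fixed points at infinity for general $H$), and a subgroup fixing only a point at infinity is not obviously contained in a proper parabolic. (2) The ping-pong step is not merely ``delicate'': two hyperbolic isometries of a ${\rm CAT}(0)$ cube complex with disjoint endpoint pairs need not generate $F_2$ at all --- $\mathbb{Z}^2$ acting on $\mathbb{R}^2$ is the standard counterexample --- so one must first show that the relevant axes are contracting/rank-one. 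This is precisely where the irreducibility of $\Gamma$ and the hypothesis $\PC(H)=G_\Gamma$ must do real work; it is the content of the theorem, not a technicality that can be deferred. (3) In the remaining case you assert that the kernel $H_0$ of the action on the two-point set at infinity is ``the cyclic translation subgroup''; a priori $H_0$ maps by translation length onto a possibly non-discrete subgroup of $\mathbb{R}$, and this map can have a nontrivial elliptic kernel, so both discreteness of the image and triviality of that kernel require proof before you can conclude $H\cong\Z$ or $H\cong\Z/2\Z\ast\Z/2\Z$. Your Kurosh base case for the proposed induction is fine, but, as you acknowledge, the inductive step needs exactly the input that is missing. In short: a correct skeleton, with the load-bearing steps left open.
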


The following property of a group is important for us.
\begin{definition}
    A group $G$ is called \emph{full-sized} if it contains a non-abelian free group $F_2\hookrightarrow G$.
\end{definition}

Now, the natural question for us is
\begin{question}
	Let $G_\Gamma$ be a (directly indecomposable) graph product of arbitrary groups and $N\trianglelefteq G_\Gamma$ be a non-trivial normal subgroup. Assume that $N$ is non full-sized. What can we say about the structure of $\Gamma$?  
\end{question}

To answer the question we need to recall elementary Bass-Serre theory. More precisely we need the fact that associated to every amalgamated product $A\ast_C B$ there is an associated tree, $T_{A\ast_C B}$ on which $A\ast_C B$ acts without a global fixed point. The vertices of the tree correspond to the cosets $gA$ and $hB$ and there is an edge between $gA$ and $hB$ whenever their intersection is equal to $gC$. Clearly, the amalgamated product $A*_C B$ acts via left multiplication on $T_{A\ast_C B}$. Most important for us is that the stabilizers of vertices are conjugates of $A$ and $B$ respectively. A good reference concerning Bass-Serre theory is \cite{Serre}. 

Before we move on to the next proposition, let us discuss an example of a Bass-Serre tree. The Coxeter group $W_\Gamma$ associated to the graph $\Gamma$ in Figure 2 can also be written as an amalgamated product as follows 
$$W_\Gamma\cong \langle v_1, v_2\rangle*_{\langle v_2\rangle}\langle v_2, v_3\rangle.$$
We define $A:=\langle v_1, v_2\rangle, B:=\langle v_2, v_3\rangle$ and $C:=\langle v_2\rangle$. Figure 3 shows a part of the infinite tree $T_{A\ast_C B}$. Note that each vertex in $T_{A\ast_C B}$ has valency two. Let us consider the action of $A\ast_C B$ via left multiplication on  $T_{A*_C B}$. For example, $v_2$ acts trivially on $T_{A\ast_C B}$, $v_1$ acts as an elliptic isometry with the fixed point set consisting of the single vertex labeled by $A$ and $v_1v_3$ acts as a translation. Moreover, the isometry group of $T_{A\ast_C B}$ is isomorphic to the infinite dihedral group.
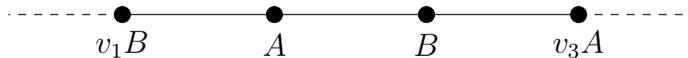
\begin{figure}[h]
	\begin{center}
		\begin{tikzpicture}
			\draw[fill=black]  (0,0) circle (3pt);
			\node at (0,-0.4) {$v_1B$};
			\draw[fill=black]  (2,0) circle (3pt);
			\node at (2,-0.4) {$A$};
			\draw[fill=black]  (4,0) circle (3pt);
			\node at (4,-0.4) {$B$};
			\draw[fill=black]  (6,0) circle (3pt);
			\node at (6,-0.4) {$v_3A$};
			\draw (0,0)--(2,0);
			\draw (2,0)--(4,0);
			\draw (4,0)--(6,0);
			\draw[dashed] (0,0)--(-1.5,0);
			\draw[dashed] (6,0)--(7.5,0);
\end{tikzpicture}
\caption{A part of the Bass-Serre tree $T_{A*_C B}$.}
\end{center}
\end{figure}

We note also that the Bass-Serre tree associated to the free product $\Z/n\Z*\Z/n\Z$ is infinite and each vertex has valency $n$.

\begin{proposition}
	\label{NormalGraphProduct}
	Let $G_\Gamma$ be a directly indecomposable graph product with $|V(\Gamma)|\geq 2$ and $N\trianglelefteq G_\Gamma$ be a non-trivial normal subgroup. If $N$ is non full-sized, then $G_\Gamma\cong\Z/2\Z*\Z/2\Z$.
\end{proposition}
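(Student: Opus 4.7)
My plan is to use Proposition \ref{TitsAlternative} to reduce to the case $N\cong\Z$, and then use a Bass-Serre splitting of $G_\Gamma$ coming from a single vertex of $\Gamma$ to force $G_\Gamma$ itself to embed into the infinite dihedral group $\Z/2\Z\ast\Z/2\Z$.

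The first step is to pin down the isomorphism type of $N$. By Lemma \ref{PCNormal}, direct indecomposability forces $\PC(N)=G_\Gamma$, so $N$ is not contained in any proper parabolic subgroup. Proposition \ref{TitsAlternative} then leaves only the possibilities $N\cong\Z$ or $N\cong\Z/2\Z\ast\Z/2\Z$. In the latter case the unique infinite cyclic subgroup of index $2$ in $N$ is characteristic (it is the only torsion-free subgroup of that index), and hence normal in $G_\Gamma$; replacing $N$ by it, I may assume $N=\langle n\rangle\cong\Z$.

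For the main argument, I would observe that direct indecomposability with $|V(\Gamma)|\geq 2$ implies $\Gamma$ is not a join, so $V(\Delta)=\emptyset$ and every vertex $v\in V(\Gamma)$ has a non-neighbour. Fixing such a $v$, the standard amalgamated decomposition
$$G_\Gamma=G_{st(v)}\ast_{G_{lk(v)}} G_{\Gamma-\{v\}}$$
is non-trivial on both sides, and its Bass-Serre tree $T_v$ admits an action of $G_\Gamma$ with proper parabolic vertex and edge stabilizers. The element $n$ cannot be elliptic on $T_v$, since then $\langle n\rangle$ would lie in a conjugate of $G_{st(v)}$ or $G_{\Gamma-\{v\}}$, contradicting $\PC(N)=G_\Gamma$. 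So $n$ acts hyperbolically with axis $L_v$, and normality of $N$ forces every $g\in G_\Gamma$ to preserve $L_v$ (as $g\cdot L_v$ is the axis of $gng^{-1}=n^{\pm 1}$). This produces a homomorphism
$$\rho_v\colon G_\Gamma\longrightarrow \Isom(L_v)\cong\Z/2\Z\ast\Z/2\Z.$$

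The crucial step is injectivity of $\rho_v$. Its kernel $K_v$ fixes $L_v$ pointwise, hence lies in the stabilizer of every edge of $L_v$, i.e.\ in a conjugate of $G_{lk(v)}$. So $K_v$ is a normal subgroup of $G_\Gamma$ contained in a proper parabolic subgroup; if $K_v$ were non-trivial, Lemma \ref{PCNormal} would realize $\PC(K_v)$ as a proper direct factor of $G_\Gamma$, contradicting direct indecomposability. Hence $G_\Gamma$ embeds into $\Z/2\Z\ast\Z/2\Z$. From there I would conclude by a routine subgroup analysis: the non-trivial subgroups of $\Z/2\Z\ast\Z/2\Z$ are cyclic or again $\cong\Z/2\Z\ast\Z/2\Z$, and the constraints $\bigoplus_v G_v^{ab}\hookrightarrow (\Z/2\Z)^2$, non-triviality of each vertex group, and $\Gamma$ not being a join together force $|V(\Gamma)|=2$, both vertex groups $\cong\Z/2\Z$, and no edge between them, giving $G_\Gamma\cong\Z/2\Z\ast\Z/2\Z$. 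The main obstacle is precisely the injectivity of $\rho_v$; this is the one point where the interplay between direct indecomposability and the parabolic-closure statement of Lemma \ref{PCNormal} does the essential work.
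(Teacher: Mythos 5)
Your argument is essentially the paper's own proof: reduce the $\Z/2\Z*\Z/2\Z$ case to an infinite cyclic normal subgroup, split $G_\Gamma$ as $G_{st(v)}\ast_{G_{lk(v)}}G_{\Gamma-\{v\}}$, show the generator acts hyperbolically on the Bass--Serre tree, push the whole group into $\Isom$ of the invariant axis, and kill the kernel via Lemma \ref{PCNormal}. The only blemish is the closing aside that $\bigoplus_v G_v^{ab}$ embeds into $(\Z/2\Z)^2$ (abelianizations of subgroups need not embed into the abelianization of the ambient group), but this is harmless since a directly indecomposable graph product on at least two non-trivial vertex groups contains a non-abelian free product and so must be the whole of $\Z/2\Z*\Z/2\Z$.
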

\begin{proof}
   First we note that if a group $G$ has a normal subgroup $N\cong \Z/2\Z*\Z/2\Z=\langle a,b\mid a^2=b^2=1\rangle$, then the subgroup $\langle ab\rangle\cong\Z$ is normal in $G$. This is the case since the only elements of infinite order in $\Z/2\Z*\Z/2\Z$ are of the form $(ab)^k$ for some $k\in\mathbb{Z}-\{0\}$ and conjugation does not change the order of an element.
   
   Let $N$ be a non-trivial non full-sized normal subgroup in $G_\Gamma$. By Proposition \ref{TitsAlternative} we know that $N\cong\mathbb{Z}$ or $N\cong\Z/2\Z*\Z/2\Z$. If $N\cong \Z/2\Z*\Z/2\Z$ then by the above remark we know that $G_\Gamma$ has a normal subgroup $N'\cong\Z$. Our goal is to show that $G_\Gamma$ is isomorphic to $\Z/2\Z*\Z/2\Z$.
   
   By assumption $\Gamma$ is not complete and has at least $2$ vertices, therefore $G_\Gamma$ decomposes as an amalgamated product of special parabolic subgroups $G_\Gamma\cong G_{\Gamma_1}*_{G_{\Gamma_3}}G_{\Gamma_2}$. More precisely, we choose a vertex $v\in V(\Gamma)$ and then $V(\Gamma_1)=V(\Gamma)-\left\{v\right\}$, $V(\Gamma_3)=lk(v)$ and $V(\Gamma_2)=st(v)$. By Bass-Serre theory \cite{Serre}, there is a simplicial tree $T$ on which $G_{\Gamma_1}*_{G_{\Gamma_3}}G_{\Gamma_2}$ acts simplicially without a global fixed point. We restrict this action to the infinite cyclic normal subgroup $N$ (resp. $N'$). The generator $n$ of this normal subgroup must act via a hyperbolic isometry since otherwise $N$ has a global fixed point and hence is contained in a proper parabolic subgroup. In that case we would obtain $\PC(N)\neq G_\Gamma$, which is impossible by Lemma \ref{PCNormal}. Hence, we have a group homomorphism $N\to{\rm Isom}({\rm Min}(N))\cong\Z/2\Z*\Z/2\Z$, where ${\rm Min}(N)$ denotes the straight path in $T$ that is invariant under $N$ and on which $n$ induces a translation of non-zero amplitude (see \cite[I 6.4 Prop. 25]{Serre}). Since $N$ is normal and for $g\in G_\Gamma$ we have $g{\rm Min}(N)={\rm Min}(gNg^{-1})={\rm Min}(N)$, we obtain a group homomorphism $\varphi\colon G_\Gamma\to{\rm Isom(Min}(N))\cong \Z/2\Z*\Z/2\Z$. Furthermore, we know that   $\ker(\varphi)$ has a global fixed point and is therefore contained in a proper parabolic subgroup. By the previous arguments it follows that $\ker(\varphi)=\left\{1\right\}$. More precisely, since $\ker(\varphi)$ is normal, $\PC(\ker(\varphi))$ is also normal. We know that $G_\Gamma$ is directly indecomposable, so by Lemma \ref{PCNormal} $\PC(\ker(\varphi))$ is either $G_\Gamma$ or trivial. We additionally have $\ker(\varphi))\subseteq \PC(\ker(\varphi))=\{1\}$, because it is also contained in a proper parabolic subgroup. Hence $\varphi$ is injective. The map $\varphi$ is also surjective since $\Gamma$ has at least $2$ vertices. Thus $G_\Gamma\cong \Z/2\Z*\Z/2\Z$.
\end{proof}

Using Proposition \ref{NormalGraphProduct} we are able to characterize non full-sized normal subgroups in  directly decomposable graph products. But first, we need to recall the connection between a combinatorial property of the defining graph $\Gamma$ and the center of $G_\Gamma$. 

The center of a graph product can be easily read of the defining graph $\Gamma$ if the vertex groups are all abelian. More precisely,
a graph product $G_\Gamma$ decomposes as a direct product of special parabolic subgroups $G_\Gamma\cong G_\Delta\times G_{\Gamma-\Delta}$ where $V(\Delta)=\left\{v\in V(\Gamma)\mid st(v)=V(\Gamma)\right\}$. Furthermore, the center of $G_\Gamma$ is contained in the special parabolic subgroup $G_\Delta$ and if the vertex groups in $V(\Delta)$ are abelian, then $Z(G_\Gamma)\cong G_\Delta$.

\begin{corollary}
\label{ProductNormal}
 	Let $G_\Gamma$ be a graph product. We decompose $G_\Gamma\cong G_{\Gamma_1}\times\ldots\times G_{\Gamma_n}$ as a direct product where the factors are directly indecomposable special parabolic subgroups. Further, we define $$J:=\left\{j\in\left\{1,\ldots, n\right\}\mid |V(\Gamma_j)|\geq 2, G_{\Gamma_j}\cong \Z/2\Z*\Z/2\Z\right\}.$$ Let $N\trianglelefteq G_\Gamma$ be a non-trivial normal subgroup. If $N$ is non full-sized, then $N\trianglelefteq G_\Delta\times \prod_{j\in J} G_{\Gamma_j}$ where $V(\Delta)=\left\{v\in V(\Gamma)\mid st(v)=V(\Gamma)\right\}$ and the image of the projection of $N$ to $G_{\Gamma_j}$ is trivial or infinite for all $j\in J$. In particular, if $N$ is finite, then $N\trianglelefteq  G_\Delta$.
\end{corollary}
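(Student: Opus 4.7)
The strategy is to analyze the image of $N$ under the projections to each indecomposable direct factor, combining Proposition \ref{NormalGraphProduct} with the structure of normal subgroups of the infinite dihedral group $D_\infty\cong\Z/2\Z*\Z/2\Z$. Let $\pi_i\colon G_\Gamma\twoheadrightarrow G_{\Gamma_i}$ be the projections. Each $\pi_i(N)$ is a normal subgroup of $G_{\Gamma_i}$, and a standard lifting argument shows that $\pi_i(N)$ is again non full-sized: if $F_2\hookrightarrow\pi_i(N)$, one lifts generators to elements $a',b'\in N$, and since any relation in $\langle a',b'\rangle$ would descend to a relation in $F_2$, one obtains $F_2\cong\langle a',b'\rangle\subseteq N$, contradicting the hypothesis.

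Next I would split the factors according to vertex count. For $|V(\Gamma_i)|\geq 2$, Proposition \ref{NormalGraphProduct} forces either $\pi_i(N)=\{1\}$ or $G_{\Gamma_i}\cong\Z/2\Z*\Z/2\Z$, i.e.\ $i\in J$. For $|V(\Gamma_i)|=1$ the unique vertex of $\Gamma_i$ must belong to $V(\Delta)$: in the join decomposition $\Gamma=\Gamma_1*\ldots*\Gamma_n$ it is adjacent to every vertex in every other $\Gamma_j$, hence to every vertex of $\Gamma$. Conversely, each $v\in V(\Delta)$ must appear as its own single-vertex factor, since otherwise $v$ would split off a non-trivial join inside the factor containing it, contradicting indecomposability. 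Thus the product of the single-vertex factors equals $G_\Delta$, and assembling everything yields $N\subseteq G_\Delta\times\prod_{j\in J}G_{\Gamma_j}$; normality of $N$ in $G_\Gamma$ restricts to normality in this intermediate subgroup.

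Finally, for $j\in J$ I would invoke the classification of normal subgroups of $D_\infty=\langle s,t\mid s^2=t^2=1\rangle$: any non-trivial normal subgroup contains a non-trivial power of $st$, since the only finite subgroups are the trivial one and the order-two subgroups generated by reflections, and every reflection has infinitely many distinct conjugates. This yields the ``trivial or infinite'' dichotomy for each $\pi_j(N)$ with $j\in J$, and in particular a finite $N$ forces $\pi_j(N)=\{1\}$ for every $j\in J$, so that $N\subseteq G_\Delta$. I do not expect a serious obstacle beyond keeping track of the two types of factors; the substantive input is Proposition \ref{NormalGraphProduct}, while the lifting argument and the analysis of $D_\infty$ are routine.
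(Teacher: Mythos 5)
Your proposal is correct and follows essentially the same route as the paper: project $N$ onto each directly indecomposable factor, observe that the projections remain non full-sized, apply Proposition \ref{NormalGraphProduct} to force triviality of the projection onto any factor with at least two vertices that is not infinite dihedral, and use the fact that a non-trivial normal subgroup of $\Z/2\Z*\Z/2\Z$ is infinite. The extra details you supply (the lifting argument showing quotients of non full-sized groups are non full-sized, and the identification of the single-vertex factors with $G_\Delta$) are merely expansions of steps the paper leaves implicit.
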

\begin{proof}
	First, we note that if a group $G$ is non full-sized, then every quotient of $G$ is non full-sized.
	
	Let $N\trianglelefteq G_\Gamma$ be a non full-sized normal subgroup. Now, let us consider the projection $\pi_i\colon G_\Gamma\twoheadrightarrow G_{\Gamma_i}$. Since $\pi_i$ is surjective, we know that $\pi_i(N)\trianglelefteq G_{\Gamma_i}$. Furthermore, by the above remark  the normal subgroup $\pi_i(N)$ is non full-sized. Therefore Proposition \ref{NormalGraphProduct} implies that if $|V(\Gamma_i)|\geq 2$ and $G_{\Gamma_i}$ is not isomorphic to $\Z/2\Z*\Z/2\Z$, then $\pi_i(N)$ is trivial. Thus $N\trianglelefteq G_\Delta\times \prod_{j\in J} G_{\Gamma_j}$ where $V(\Delta)=\left\{v\in V(\Gamma)\mid st(v)=V(\Gamma)\right\}$ and $$J:=\left\{j\in\left\{1,\ldots, n\right\}\mid |V(\Gamma_j)|\geq 2, G_{\Gamma_j}\cong \Z/2\Z*\Z/2\Z\right\}.$$ 
	Since $\pi_j(N)\trianglelefteq G_{\Gamma_j}\cong \Z/2\Z*\Z/2\Z$, we know that $\pi_j(N)\cong\mathbb{Z}$ or $\pi_j(N)\cong\Z/2\Z*\Z/2\Z$.  Hence, if $N$ is finite, then $N\trianglelefteq G_\Delta$.
\end{proof}

Since the center of a graph product of abelian groups is isomorphic to $G_\Delta$, the following result is a direct consequence of Corollary \ref{ProductNormal}.
\begin{corollary}
\label{NotDInfty}
Let $G_\Gamma$ be a graph product of abelian groups. We decompose $G_\Gamma\cong G_{\Gamma_1}\times\ldots\times G_{\Gamma_n}$ as a direct product where the factors are directly indecomposable special parabolic subgroups. 

If $G_{\Gamma_i}$ is not isomorphic to $\Z/2\Z*\Z/2\Z$ for all $i=1,\ldots, n$, then a normal subgroup in $G_\Gamma$ is contained in the center of $G_\Gamma$ or is full-sized.
\end{corollary}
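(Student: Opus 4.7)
The plan is to reduce the statement immediately to Corollary \ref{ProductNormal} together with the description of the center of a graph product of abelian groups that was recalled just before that corollary. Let $N \trianglelefteq G_\Gamma$ be a normal subgroup and assume $N$ is not full-sized. The trivial case $N = \{1\}$ is clear since the trivial subgroup lies in the center, so I assume $N$ is non-trivial.

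First, I apply Corollary \ref{ProductNormal} to $N$. This yields
\[
N \;\trianglelefteq\; G_\Delta \times \prod_{j \in J} G_{\Gamma_j},
\]
where $V(\Delta) = \{v \in V(\Gamma) \mid st(v) = V(\Gamma)\}$ and
\[
J = \bigl\{\, j \in \{1,\dots,n\} \,\bigm|\, |V(\Gamma_j)| \geq 2,\ G_{\Gamma_j} \cong \Z/2\Z * \Z/2\Z \,\bigr\}.
\]
The hypothesis of the corollary is exactly that no directly indecomposable factor $G_{\Gamma_i}$ is isomorphic to $\Z/2\Z * \Z/2\Z$, so $J = \emptyset$ and the product above collapses to $G_\Delta$. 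Hence $N \subseteq G_\Delta$.

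Finally, I invoke the fact recalled in the paragraph preceding Corollary \ref{ProductNormal}: for a graph product of abelian groups, $G_\Gamma$ decomposes as $G_\Delta \times G_{\Gamma - \Delta}$ and the center satisfies $Z(G_\Gamma) = G_\Delta$. Therefore $N \subseteq Z(G_\Gamma)$, which is exactly what is claimed. There is no genuine obstacle here: the work has been done in Proposition \ref{NormalGraphProduct} and Corollary \ref{ProductNormal}, and the abelian-vertex assumption only enters to identify $G_\Delta$ with the full center. The only thing worth being careful about is that the hypothesis rules out the $\Z/2\Z * \Z/2\Z$ factors for every $i$, which is precisely what forces the index set $J$ in Corollary \ref{ProductNormal} to be empty.
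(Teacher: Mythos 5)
Your proof is correct and follows exactly the route the paper intends: the paper states this corollary as a direct consequence of Corollary \ref{ProductNormal} together with the identification $Z(G_\Gamma)=G_\Delta$ for abelian vertex groups, and you have simply spelled out that the hypothesis forces the index set $J$ to be empty. No issues.
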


As an immediate corollary we obtain the following result concerning normal subgroups in right-angled Artin groups. This result can also be proven directly as follows using \cite[Thm. 1.2]{Baudisch}.
\begin{lemma}(\cite{math})
	\label{F2}
	Let $A_\Gamma$ be a right-angled Artin group and $N$ be a subgroup. If $N$ is normal, then $N$ is contained in the center of $A_\Gamma$ or $N$ is full-sized.
\end{lemma}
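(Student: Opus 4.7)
The plan is to prove the contrapositive: assume $N \trianglelefteq A_\Gamma$ is not contained in $Z(A_\Gamma)$, and produce a copy of $F_2$ inside $N$. Pick $n \in N$ with $n \notin Z(A_\Gamma)$; by definition of the center there exists $g \in A_\Gamma$ with $[g,n] \neq 1$. Then $gng^{-1}$ also lies in $N$ by normality, so the subgroup $\langle n, gng^{-1}\rangle$ is contained in $N$. My candidate for a free subgroup of rank two will be exactly this subgroup.

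The first use of Baudisch's theorem (\cite[Thm. 1.2]{Baudisch}) is applied to $\langle n,g\rangle$: this is $2$-generated, hence either abelian or free of rank $2$, and since $[g,n]\neq 1$ it must be the latter, with $n$ and $g$ as free generators. The second use is applied to $\langle n, gng^{-1}\rangle$, which a priori is abelian or free of rank $2$; in the free case $N$ contains $F_2$ and we are done. Thus the whole argument reduces to excluding the abelian case.

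To exclude it, I would use the structure of centralizers in a free group. Since $n$ is a free generator of $\langle n,g\rangle\cong F_2$, it is primitive, so its centralizer inside this free group equals $\langle n\rangle$. If $gng^{-1}$ were to commute with $n$, then $gng^{-1}\in\langle n\rangle$, i.e.\ $gng^{-1}=n^k$ for some $k\in\Z$. Passing to the abelianization $\Z^2$ of $\langle n,g\rangle$, where conjugation acts trivially, forces $k=1$, so $gng^{-1}=n$, contradicting $[g,n]\neq 1$. Hence $\langle n, gng^{-1}\rangle$ is non-abelian, and by Baudisch isomorphic to $F_2$, so $N$ is full-sized.

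I do not expect a genuinely hard step: Baudisch's theorem does the heavy lifting both times, and the only delicate point is the observation that $n$ is a free generator of $\langle n,g\rangle$, which makes the centralizer/abelianization argument in $F_2$ immediate. The proof is essentially a ``two-application'' trick: once to identify $\langle n,g\rangle$ as $F_2$, and once to promote the normality-generated pair $\{n,gng^{-1}\}$ into a second free group sitting inside $N$.
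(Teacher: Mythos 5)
Your proof is correct and follows essentially the same route as the paper's: two applications of Baudisch's theorem, first to $\langle n,g\rangle$ and then, using normality to get $gng^{-1}\in N$, to $\langle n, gng^{-1}\rangle$. The only differences are cosmetic: the paper first passes to $N\cap A_{\Gamma-\Delta}$ via the decomposition $A_\Gamma\cong A_\Delta\times A_{\Gamma-\Delta}$ before running the same argument, and it merely asserts that $n$ and the conjugate do not commute, whereas you supply the (correct) centralizer/abelianization justification.
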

\begin{proof}
	It was proven by Baudisch in \cite[Thm. 1.2]{Baudisch} that every two generated subgroup $\langle u, v\rangle$ in $A_\Gamma$ such that $u$ and $v$ do not commute is isomorphic to $F_2$.
	
	We decompose $A_\Gamma$ as $A_\Gamma\cong A_\Delta \times A_{\Gamma-\Delta}$. Note that the special parabolic subgroup $A_{\Gamma-\Delta}$ is centerless.
	
	If $N$ is not contained in the center of $A_\Gamma$, which is equal to $A_\Delta$, then
	 the normal subgroup  $N\cap A_{\Gamma-\Delta}$  of the special parabolic subgroup $A_{\Gamma-\Delta}$ is non-trivial. Thus there exist $n\in N\cap A_{\Gamma-\Delta}$, $n\neq 1$ and $x\in A_{\Gamma-\Delta}$ such that $xn\neq nx$. By Baudisch's result we know that $\langle x,n\rangle\cong F_2$. Therefore, the elements $n$ and  $xnx^{-1}$ do not commute either, and again by Baudisch's result we have $\langle n, xnx^{-1}\rangle\cong F_2$. Since  $N\cap A_{\Gamma-\Delta}$ is normal, if follows that $xnx^{-1}\in N\cap A_{\Gamma-\Delta}$. Thus $F_2\cong \langle n, xnx^{-1}\rangle\subseteq N\cap A_{\Gamma-\Delta}\subseteq N$.
\end{proof}
We note that the structure of finitely generated normal subgroups in right-angled Artin groups was investigated in \cite{CasalsRuizZearra}.

\subsection{Normal subgroups in automorphism groups}

For a group $G$ we denote by $\Aut(G)$ the group of all bijective group homomorphisms from $G$ to $G$ and by $\Inn(G)$ the normal subgroup consisting of all inner automorphisms. More precisely, for $g\in G$ we denote the inner automorphism that maps $h\in G$ to $ghg^{-1}$ by $\gamma_g$. Hence the map $\varphi\colon G\to \Inn(G)$, $\varphi(g)=\gamma_g$ is an epimorphism with $\ker(\varphi)=Z(G)$.  Thus, if $G$ is centerless and $\gamma_{g_1}=\gamma_{g_2}$, then the equality $g_1=g_2$ holds and therefore $\Inn(G)\cong G$.

We begin by proving the following result for the automorphism group of a centerless group.
\begin{lemma}
\label{GeneralCenterless}
    Let $\mathcal{P}$ be a property of groups that is inherited by subgroups. 
    Let $G$ be a centerless group. If $G$ does not have non-trivial normal subgroups with property $\mathcal{P}$, then $\Aut(G)$ also does not have non-trivial normal subgroups with property $\mathcal{P}$.
\end{lemma}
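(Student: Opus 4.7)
The plan is to exploit the fact that a centerless group $G$ embeds into $\Aut(G)$ as the normal subgroup $\Inn(G)$, and that two mutually normal subgroups of a group always have trivial commutator inside their intersection.

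First I would observe that since $G$ is centerless, the map $G \to \Inn(G)$, $g \mapsto \gamma_g$, is an isomorphism, so $\Inn(G) \trianglelefteq \Aut(G)$ is a copy of $G$ sitting inside $\Aut(G)$. Let $N \trianglelefteq \Aut(G)$ be a normal subgroup satisfying $\mathcal{P}$. Since $\mathcal{P}$ is inherited by subgroups, the intersection $N \cap \Inn(G)$ also has property $\mathcal{P}$. On the other hand, $N \cap \Inn(G)$ is normal in $\Inn(G)$ (indeed, it is normal in $\Aut(G)$), so via the isomorphism $\Inn(G) \cong G$ it corresponds to a normal subgroup of $G$ with property $\mathcal{P}$. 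By hypothesis this subgroup is trivial, hence $N \cap \Inn(G) = \{id\}$.

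Next I would use the standard fact that if $A, B \trianglelefteq H$ with $A \cap B = \{1\}$, then $[A,B] = \{1\}$: for $\alpha \in N$ and $\gamma_g \in \Inn(G)$, the commutator $[\alpha,\gamma_g] = (\alpha \gamma_g \alpha^{-1}) \gamma_g^{-1}$ lies in $\Inn(G)$ because $\Inn(G)$ is normal, and lies in $N$ because $N$ is normal; so $[\alpha,\gamma_g] \in N \cap \Inn(G) = \{id\}$. Thus $N$ centralizes $\Inn(G)$.

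Finally, I would invoke the identity $\alpha \gamma_g \alpha^{-1} = \gamma_{\alpha(g)}$, valid for every $\alpha \in \Aut(G)$ and $g \in G$. Since $\alpha$ commutes with $\gamma_g$, we obtain $\gamma_{\alpha(g)} = \gamma_g$ for every $g \in G$; using centerlessness of $G$ once more (so that $\Inn(G) \cong G$, i.e., $\gamma_h = \gamma_{h'}$ forces $h = h'$), this yields $\alpha(g) = g$ for all $g \in G$, so $\alpha = id$. Hence $N = \{id\}$, as required. No step here looks like a serious obstacle; the only subtlety worth stating clearly is that property $\mathcal{P}$ is used exactly once, namely to transport $N \cap \Inn(G)$ back into a normal $\mathcal{P}$-subgroup of $G$ to kill it by assumption.
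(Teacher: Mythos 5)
Your proof is correct and follows essentially the same route as the paper: both reduce to showing $N\cap\Inn(G)$ is trivial via the $\mathcal{P}$-hypothesis, and then observe that the commutator $[\,n,\gamma_g\,]=\gamma_{n(g)g^{-1}}$ lands in $N\cap\Inn(G)=\{id\}$, forcing $n(g)=g$ by centerlessness. Your packaging of the last step as the standard ``mutually normal subgroups with trivial intersection commute'' fact is just a cleaner way of phrasing the paper's explicit computation.
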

\begin{proof}
 Let $G$ be a centerless group that does not have non-trivial normal subgroups with property $\mathcal{P}$. Further, let $N$ be a normal subgroup in $\Aut(G)$ with property $\mathcal{P}$.
We consider the subgroup $N\cap\Inn(G)$ which is a normal subgroup in $\Inn(G)\cong G$ with property $\mathcal{P}$. Thus, by assumption $N\cap \Inn(G)$ is trivial.  Let $g\in G$ be an arbitrary element and $n\in N$ be an arbitrary element. 
	We have
	$$n\circ \gamma_g\circ n^{-1}=\gamma_{n(g)}.$$
	We multiply the above equation by $\gamma_{g^{-1}}$ on the right and obtain
	$$n\circ \gamma_g\circ n^{-1}\circ \gamma_{g^{-1}}=\gamma_{n(g)}\circ\gamma_{g^{-1}}.$$
	Since $N$ is normal, the left side of the equation in an element in $N$ and the right side of the equation is an element in $\Inn(G)$. Hence 
	$$\gamma_{n(g)}\circ\gamma_{g^{-1}}=id.$$
	Thus $n(g)g^{-1}\in Z(G)=\{1\}$ and $N$ is trivial.
\end{proof}

\begin{remark}
The proof of the above lemma shows the following: Given a centerless group $G$ and a normal subgroup $N\trianglelefteq\Aut(G)$, for every $n\in N$ and $g\in G$ the inner automorphism $\gamma_{n(g)g^{-1}}$ is contained in $N$.
\end{remark}

\begin{proposition}
\label{CenterlessAutG}
Let $G_\Gamma$ be a graph product of arbitrary groups. We decompose $G_\Gamma\cong G_{\Gamma_1}\times\ldots\times G_{\Gamma_n}$ as a direct product where the factors are directly indecomposable special parabolic subgroups. 
\begin{enumerate}
\item If $V(\Delta)=\emptyset$, then $\Aut(G_\Gamma)$ does not have non-trivial finite normal subgroups.
\item Assume that $G_{\Gamma_i}\ncong\Z/2\Z*\Z/2\Z$ for all $i=1,\ldots, n$.
If $|V(\Gamma_i)|\geq 2$ for $i=1\ldots,n$, then a non-trivial normal subgroup in $\Aut(G_\Gamma)$ is full-sized. 
In particular, if $G_\Gamma$ is a centerless graph product of abelian groups, then a non-trivial normal subgroup in $\Aut(G_\Gamma)$ is full-sized. 
\end{enumerate}
\end{proposition}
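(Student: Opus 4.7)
The plan is to reduce both parts to Lemma \ref{GeneralCenterless} by choosing the property $\mathcal{P}$ to be ``being finite'' for (i) and ``being non full-sized'' for (ii); both are manifestly inherited by subgroups. For this reduction to work I first need to show that $G_\Gamma$ is centerless under each set of hypotheses, and second that $G_\Gamma$ itself has no non-trivial normal subgroup of the relevant type; Corollary \ref{ProductNormal} will then provide exactly this control.

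I would begin by establishing $V(\Delta)=\emptyset$ in both parts. For (i) this is the hypothesis. For (ii), suppose for contradiction that $v\in V(\Delta)$; then $st(v)=V(\Gamma)$, so $v$ commutes with every other vertex of $\Gamma$. The vertex $v$ lies in exactly one $\Gamma_i$, and since it is adjacent to all other vertices of $\Gamma_i$ we obtain a direct decomposition $G_{\Gamma_i}\cong G_v\times G_{\Gamma_i-\{v\}}$, forcing $V(\Gamma_i)=\{v\}$ by direct indecomposability and contradicting $|V(\Gamma_i)|\geq 2$. Since $Z(G_\Gamma)\subseteq G_\Delta$ for any graph product, $V(\Delta)=\emptyset$ implies that $G_\Gamma$ is centerless in both cases.

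With this in hand, Corollary \ref{ProductNormal} delivers the remaining input. For (i), any finite normal subgroup of $G_\Gamma$ is contained in $G_\Delta=\{1\}$ and is therefore trivial. For (ii), any non full-sized normal subgroup of $G_\Gamma$ is contained in $G_\Delta\times\prod_{j\in J}G_{\Gamma_j}$; here $V(\Delta)=\emptyset$ and $J=\emptyset$ (no factor is isomorphic to $\Z/2\Z*\Z/2\Z$ by assumption), so this product is trivial and the normal subgroup must be trivial. Applying Lemma \ref{GeneralCenterless} to the centerless group $G_\Gamma$ with the chosen $\mathcal{P}$ then transfers these conclusions to $\Aut(G_\Gamma)$, yielding both (i) and (ii). The ``in particular'' statement of (ii) follows because for abelian vertex groups one has $Z(G_\Gamma)=G_\Delta$, so centerlessness is equivalent to $V(\Delta)=\emptyset$. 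I do not expect any serious obstacle: the genuine technical work has already been packaged into Corollary \ref{ProductNormal} and Lemma \ref{GeneralCenterless}, and the proposition becomes a clean combination of the two; the only step requiring a little care is the reduction $|V(\Gamma_i)|\geq 2\Rightarrow V(\Delta)=\emptyset$ carried out above.
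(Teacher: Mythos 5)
Your proposal is correct and follows essentially the same route as the paper: both reduce the statement to Corollary \ref{ProductNormal} (to rule out finite, respectively non full-sized, normal subgroups of $G_\Gamma$ itself) and then transfer to $\Aut(G_\Gamma)$ via Lemma \ref{GeneralCenterless}. The only difference is that you explicitly verify that $|V(\Gamma_i)|\geq 2$ for all $i$ forces $V(\Delta)=\emptyset$ (hence centerlessness), a step the paper leaves implicit; this is a welcome clarification rather than a deviation.
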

\begin{proof}
If $V(\Delta)=\emptyset$, then $Z(G_\Gamma)=\left\{1\right\}$. Corollary \ref{ProductNormal} says that $G_\Gamma$ does not have non-trivial finite normal subgroups. Since the property to be finite is inherited by subgroups, Lemma \ref{GeneralCenterless} implies that $\Aut(G_\Gamma)$ does not have non-trivial finite normal subgroups. 
	
Let $\mathcal{P}$ be the property of a group to be non full-sized. Clearly, $\mathcal{P}$ is inherited by subgroups. Now, Corollary \ref{ProductNormal} tells us that in case (ii) $G_\Gamma$ does not have non-trivial non full-sized normal subgroups. Thus, by Lemma \ref{GeneralCenterless} a non-trivial normal subgroup in $\Aut(G_\Gamma)$ is full-sized.	
\end{proof}

As an immediate corollary we obtain Proposition A from the introduction.
\vspace{0.5cm}

We end this chapter by recalling a description of non full-sized normal subgroups in the automorphism group of a graph product $G_\Gamma$ where $\Gamma$ is a complete graph and all vertex groups are infinite cyclic, in that case $\Aut(G_\Gamma)\cong\Aut(\Z^n)\cong \GL_n(\Z)$. Note that a subgroup in $\GL_n(\Z)$ is virtually solvable or is full-sized (see \cite{Tits}).

Let us first describe normal subgroups in $\GL_n(\Z)$ in the cases where $n=1, 2$. If $n=1$, then $\GL_1(\Z)\cong \Z/2\Z$. If $n=2$, then ${\rm PSL}_2(\Z)\cong \Z/2\Z*\Z/3\Z$
and Proposition \ref{NormalGraphProduct} then implies that a non-trivial normal subgroup in ${\rm PSL}_2(\Z)$ is full-sized. Hence a non-trivial non full-sized normal subgroup in $\GL_2(\Z)$ is equal to the center $Z(\GL_2(\Z))=\left\{ I, -I\right\}$ where $I$ is the identity matrix.  

Recall, a group $G$ is called \emph{almost simple} if every normal subgroup is either finite and is contained in the center or has finite index in $G$. It follows from the Margulis normal subgroups theorem \cite{Margulis} that ${\rm SL}_n(\Z)$ is almost simple for $n\geq 3$. Since ${\rm SL}_n(\Z)$ has index $2$ in $\GL_n(\Z)$ and every finite index subgroup in ${\rm SL}_n(\Z)$ is full-sized (see \cite[Exercise 4.E.20(4)]{Loeh}), we know that a non full-sized normal subgroup in $\GL_n(\Z)$ is contained in the center of $\GL_n(\Z)$. We summarize these results in the following lemma.
\begin{lemma}
\label{GLnoF2}
	Let $N$ be a non-trivial normal subgroup in ${\rm GL}_n(\mathbb{Z})$. If $N$ is non full-sized, then $N=Z({\rm GL}_n(\mathbb{Z}))=\left\{I, -I\right\}$.
\end{lemma}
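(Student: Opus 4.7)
The plan is to argue by case analysis on $n$, following the outline in the paragraph preceding the lemma. The case $n=1$ is immediate since $\GL_1(\Z)=\{\pm I\}$ is already equal to its center. So assume $n\ge 2$.

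The first step is to control $N\cap \SL_n(\Z)$, which is normal in $\SL_n(\Z)$. For $n=2$, project through $\pi\colon \SL_2(\Z)\twoheadrightarrow {\rm PSL}_2(\Z)\cong \Z/2\Z\ast \Z/3\Z$. The target is a directly indecomposable graph product on at least two vertices that is not isomorphic to $\Z/2\Z\ast \Z/2\Z$, so Proposition \ref{NormalGraphProduct} tells us that every non-trivial normal subgroup of it is full-sized. Since $N$ is non full-sized, the image $\pi(N\cap \SL_2(\Z))$ must be trivial, i.e.\ $N\cap \SL_2(\Z)\subseteq \{\pm I\}$. For $n\ge 3$, the Margulis consequence cited in the excerpt (almost simplicity of $\SL_n(\Z)$) forces $N\cap \SL_n(\Z)$ to be either finite and central or of finite index; in the latter case it is full-sized by the cited exercise, contradicting our hypothesis. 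Either way I obtain
$$N\cap \SL_n(\Z)\;\subseteq\; Z(\SL_n(\Z))\;\subseteq\; \{\pm I\}\;=\;Z(\GL_n(\Z)).$$

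The second step rules out elements of $N$ of determinant $-1$ other than (possibly) $-I$. Pick $g\in N$ with $\det(g)=-1$. For every $h\in \SL_n(\Z)$, normality gives $[h,g]\in N\cap \SL_n(\Z)\subseteq\{\pm I\}$, so the assignment $h\mapsto [h,g]$ is a homomorphism $\varphi\colon \SL_n(\Z)\to \{\pm I\}$. For $n\ge 3$, $\SL_n(\Z)$ is perfect, so $\varphi$ is trivial and $g$ centralizes $\SL_n(\Z)$; since the centralizer of $\SL_n(\Z)$ in $\GL_n(\Z)$ consists of scalar matrices, $g\in\{\pm I\}$. When $n$ is even this contradicts $\det(g)=-1$; when $n$ is odd we conclude $g=-I$. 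For $n=2$ the homomorphism $\varphi$ may be non-trivial (as $\SL_2(\Z)^{ab}\cong \Z/12\Z$), but its kernel is of finite index in $\SL_2(\Z)$ and therefore Zariski-dense in $\SL_2(\mathbb R)$; so $g$ centralizes a Zariski-dense subgroup of $\SL_2$, which forces $g$ to be a scalar in $\GL_2(\mathbb Q)$, and then $\det(g)=1$ contradicts $\det(g)=-1$.

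Combining the two steps, every element of $N$ lies in $\{\pm I\}$ (or $n$ is odd and $N\ni -I$), hence the non-trivial group $N$ must be exactly $\{\pm I\}$. The main obstacle I anticipate is the case $n=2$ in the second step, where perfectness of $\SL_n(\Z)$ is unavailable and one must instead use Zariski density (or an explicit computation with the generators $S,T$) to exclude a Klein-four possibility $\{I,-I,g,-g\}\trianglelefteq \GL_2(\Z)$. The remainder of the argument is a direct assembly of Proposition \ref{NormalGraphProduct} for $n=2$ and the Margulis normal subgroup theorem for $n\ge 3$.
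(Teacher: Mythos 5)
Your proof is correct and follows the same route as the paper: ${\rm PSL}_2(\Z)\cong\Z/2\Z*\Z/3\Z$ together with Proposition \ref{NormalGraphProduct} for $n=2$, and the Margulis normal subgroup theorem plus the full-sizedness of finite-index subgroups of $\SL_n(\Z)$ for $n\geq 3$. The only difference is that you make explicit, via the commutator homomorphism $h\mapsto[h,g]$ into $\{\pm I\}$, the passage from $N\cap\SL_n(\Z)\subseteq\{\pm I\}$ to $N\subseteq\{\pm I\}$, a step the paper treats as immediate.
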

\section{Characteristic subgroups}
To construct quotients of the entire automorphism group $\Aut(G_\Gamma)$ it is a common tool to consider characteristic subgroups of $G_\Gamma$. This can be helpful since, in some cases, it allows us to reduce a question about a complicated graph product to a question about two smaller graph products and a short exact sequence.

We start this chapter by recalling the definition of a characteristic subgroup. Using this we obtain a very important short exact sequence that will be used throughout the rest of the article. Furthermore we prove some basic but useful results about a splitting of the automorphism group of a graph product. Finally we show that if a right-angled Coxeter groups decomposes as a direct product of an infinite dihedral group and another right-angled Coxeter group, it has a characteristic subgroup isomorphic to a product of copies of the infinite dihedral group.

\begin{definition}
	Let $G$ be a group and $H\subseteq G$ be a subgroup. The group $H$ is called \emph{characteristic in $G$} if for every $f\in\Aut(G)$ the equality 
	$f(H)=H$ holds.
\end{definition}
A characteristic subgroup $H$ of a group $G$ is always normal in $G$ and we have two group homomorphisms: $\pi_1\colon\Aut(G)\to\Aut(H)$ and $\pi_2\colon\Aut(G)\to\Aut(G/H)$ defined as follows $\pi_1(f)=f_{|H}$ and
$\pi_2(f)(gH):=f(g)H$.

For a graph product $G_\Gamma$ we always have the direct decomposition $G_\Delta\times G_{\Gamma-\Delta}$ where $V(\Delta)=\left\{v\in V(\Gamma)\mid st(v)=V(\Gamma)\right\}$. If the vertex groups in $\Delta$ are abelian, then $Z(G_\Gamma)=G_\Delta$ is a characteristic subgroup of $G_\Gamma$ and we obtain two group homomorphisms
$$\pi_1\colon\Aut(G_\Gamma)\to\Aut(G_\Delta)\text{ and }\pi_2\colon \Aut(G_\Gamma)\to\Aut(G_{\Gamma-\Delta})$$
which are obviously surjective. But if there exists a vertex group in $\Delta$ that is not abelian, then $G_\Delta$ is in general not characteristic. For example, consider the graph product of the defining graph $\Gamma$ in Figure 4.

\begin{figure}[h]
	\begin{center}
		\begin{tikzpicture}
			\draw[fill=black]  (0,0) circle (3pt);
			\node at (0,-0.4) {$\Z/2\Z$};
			\draw[fill=black]  (2,0) circle (3pt);
			\node at (2,-0.4) {$\Z/2\Z*\Z/2\Z$};
			\draw[fill=black]  (4,0) circle (3pt);
			\node at (4,-0.4) {$\Z/2\Z$};
			\draw (0,0)--(2,0);
			\draw (2,0)--(4,0);
\end{tikzpicture}
\caption{ $G_\Gamma\cong(\Z/2\Z*\Z/2\Z)^2$.}
\end{center}
\end{figure}
It is obvious, that the subgroup $G_\Delta\cong \Z/2\Z*\Z/2\Z$ is not characteristic. This observation leads us to the following question.
\begin{question}
Let $G_\Gamma$ be a graph product. Under which conditions on the vertex groups in $\Delta$ is the subgroup $G_\Delta$ characteristic in $G_\Gamma$?
\end{question}

Following Genevois \cite{Genevois}, a group $G$ is called \emph{graphically irreducible} if, for every finite simplicial graph $\Gamma$ and
every collection of vertex groups indexed by $V(\Gamma)$ such that $G\cong G_\Gamma$, the graph $\Gamma$ is a clique.

\begin{lemma}(\cite[Prop. 3.11]{Genevois})
	Let $G_\Gamma$ be a graph product. If the vertex groups are graphically irreducible, then the special parabolic subgroup $G_\Delta$ is characteristic in $G_\Gamma$.
\end{lemma}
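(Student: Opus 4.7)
The strategy is to show that every $\phi \in \Aut(G_\Gamma)$ descends to a graph automorphism of $\Gamma$ preserving the combinatorially defined subset $V(\Delta)$; since $G_\Delta$ is a direct factor (hence normal), this will immediately give $\phi(G_\Delta) = G_\Delta$.

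First I would establish the uniqueness statement at the heart of Genevois' framework: when every vertex group is graphically irreducible, the conjugacy classes $[G_v]$ for $v \in V(\Gamma)$ are intrinsic invariants of the abstract group $G_\Gamma$. Concretely, for any $\phi \in \Aut(G_\Gamma)$ there exist a bijection $\sigma \colon V(\Gamma) \to V(\Gamma)$ and elements $g_v \in G_\Gamma$ with $\phi(G_v) = g_v G_{\sigma(v)} g_v^{-1}$. Because $\phi$ preserves commutation between subgroups, $\sigma$ must also preserve adjacency in $\Gamma$, so $\sigma$ is in fact a graph automorphism of $\Gamma$.

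Next I would observe that $V(\Delta) = \{v \in V(\Gamma) \mid st(v) = V(\Gamma)\}$ is defined purely in graph-theoretic terms, hence is preserved setwise by $\sigma$. Combined with the direct product decomposition $G_\Gamma \cong G_\Delta \times G_{\Gamma - \Delta}$, which in particular makes $G_\Delta$ normal in $G_\Gamma$, we obtain for every $v \in V(\Delta)$:
$$ \phi(G_v) \;=\; g_v G_{\sigma(v)} g_v^{-1} \;\subseteq\; g_v G_\Delta g_v^{-1} \;=\; G_\Delta. $$
Since the subgroups $\{G_v\}_{v \in V(\Delta)}$ generate $G_\Delta$, it follows that $\phi(G_\Delta) \subseteq G_\Delta$. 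Applying the same argument to $\phi^{-1}$ yields the reverse inclusion, so $\phi(G_\Delta) = G_\Delta$ as desired.

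The hard part will be the uniqueness claim in the first step. This is a Krull--Schmidt-type statement for graph products, where graphical irreducibility of the vertex groups is precisely what rules out alternative graph product presentations of $G_\Gamma$. A proof typically recovers the vertex subgroups up to conjugacy from abstract algebraic data by analyzing parabolic closures $\PC(\cdot)$ and centralizers of canonical elements, using the fact that graphically irreducible groups are directly indecomposable and cannot be built from smaller graph product pieces; all the technical work resides here. Once this uniqueness is in hand, the passage to $G_\Delta$ via the combinatorial characterization of $V(\Delta)$ is essentially automatic.
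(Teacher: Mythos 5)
There is a genuine gap, and it sits exactly where you park ``all the technical work.'' Note first that the paper does not prove this lemma at all --- it is quoted verbatim from Genevois \cite[Prop.~3.11]{Genevois} --- so the only thing to assess is whether your argument is sound, and its first step is not. The claim that every $\phi\in\Aut(G_\Gamma)$ satisfies $\phi(G_v)=g_vG_{\sigma(v)}g_v^{-1}$ for a permutation $\sigma$ of $V(\Gamma)$ is false, even when all vertex groups are graphically irreducible. The simplest counterexample is $G_\Gamma=\Z^2$ (an edge with both vertex groups $\Z$, which is graphically irreducible): a transvection in $\GL_2(\Z)$ sends $\langle e_1\rangle$ to $\langle e_1e_2\rangle$, which is not a (conjugate of a) vertex group. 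A non-degenerate example with $V(\Delta)\neq V(\Gamma)$ is the path $w_1-v-w_2$ with all groups $\Z$, so $G_\Gamma\cong F_2\times\Z$ and $G_\Delta=\langle v\rangle$: the dominated transvection $\rho_{w_1v}\colon w_1\mapsto w_1v$ sends the vertex group $\langle w_1\rangle$ to $\langle w_1v\rangle$, whose parabolic closure is $\langle w_1,v\rangle\cong\Z^2$, so it is not contained in any conjugate of a vertex group. The paper's own Section~4 lists dominated transvections and partial conjugations among the generators of $\Aut(G_\Gamma)$; these generically destroy the vertex groups, so $\Aut(G_\Gamma)$ does not act on the set of conjugacy classes $[G_v]$. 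You are conflating two different statements: graphical irreducibility gives a uniqueness (Krull--Schmidt type) statement for graph-product \emph{decompositions} of the abstract group, i.e.\ any other labelled graph presenting $G_\Gamma$ is isomorphic to $(\Gamma,\mathcal G)$; it does not say that a single automorphism carries the chosen vertex subgroups to conjugates of vertex subgroups.

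Because step 1 fails, the pleasant deduction in steps 2--3 never gets started, and the argument cannot be repaired by simply proving the deferred claim --- the claim is false. A correct proof must isolate an automorphism-invariant characterization of $G_\Delta$ (or of $V(\Delta)$) that survives transvections and partial conjugations; note that the obvious candidates also fail in general ($G_\Delta$ is the center only when the vertex groups in $\Delta$ are abelian, and $G_{\Gamma-\Delta}$ is itself not characteristic in the example $F_2\times\Z$ above, so one cannot take $G_\Delta=C_{G_\Gamma}(G_{\Gamma-\Delta})$ either). Pinning down such an invariant is precisely the content of Genevois' proposition, and your proposal does not supply it.
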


Hence, we obtain two surjective group homomorphisms 
$$\pi_1\colon \Aut(G_\Gamma)\to\Aut(G_\Delta)\text{ and }\pi_2\colon\Aut(G_\Gamma)\to \Aut(G_{\Gamma-\Delta}).$$ We define 
$$\pi\colon\Aut(G_\Gamma)\to\Aut(G_\Delta)\times\Aut(G_{\Gamma-\Delta})\text{ as follows }\pi(f)=(\pi_1(f), \pi_2(f)).$$ 

The group homomorphism $\pi$ is surjective by construction. Furthermore, the following proposition gives a concrete description of the kernel of $\pi$. 
\begin{proposition}
\label{graphicallyirreducible}
Let $G_\Gamma$ be a graph product of graphically irreducible vertex groups. The kernel of $\pi\colon\Aut(G_\Gamma)\to\Aut(G_\Delta)\times \Aut(G_{\Gamma-\Delta})$ is equal to 
$$T_\Delta:=\left\{f\in \Aut(G_\Gamma)\mid\text{ for all } g\in G_\Gamma: g^{-1}f(g)\in G_\Delta \text{ and }  f_{|G_\Delta}=id \right\}.$$ 
Moreover, the short exact sequence
$$\left\{1\right\}\to T_\Delta\to\Aut(G_\Gamma)\to \Aut(G_\Delta)\times \Aut(G_{\Gamma-\Delta})\to\left\{1\right\}$$ splits and we have
$$\Aut(G_\Gamma)\cong T_\Delta\rtimes (\Aut(G_\Gamma)\times \Aut(G_{\Gamma-\Delta})).$$	
\end{proposition}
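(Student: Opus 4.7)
The plan is to first identify $\ker(\pi)$ with $T_\Delta$ by unpacking definitions, and then to construct a section of $\pi$ using the direct product decomposition $G_\Gamma \cong G_\Delta \times G_{\Gamma-\Delta}$. This decomposition is available since every $v \in V(\Delta)$ satisfies $st(v) = V(\Gamma)$, so in particular any two vertices of $\Delta$ are adjacent and $\Delta$ is a clique completely joined to $V(\Gamma) - V(\Delta)$.

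First I would verify $\ker(\pi) = T_\Delta$ directly. An automorphism $f$ lies in $\ker(\pi_1)$ exactly when $f_{|G_\Delta} = id$. Since $G_\Delta$ is normal (indeed characteristic, by the preceding lemma using graphical irreducibility) with $G_\Gamma / G_\Delta \cong G_{\Gamma-\Delta}$ via the canonical projection coming from the direct product structure, $f$ lies in $\ker(\pi_2)$ exactly when $f(g) G_\Delta = g G_\Delta$ for all $g \in G_\Gamma$, i.e.\ $g^{-1}f(g) \in G_\Delta$. The conjunction of these two conditions is precisely the description of $T_\Delta$, so $\ker(\pi) = T_\Delta$.

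For the splitting, I would define a map $s \colon \Aut(G_\Delta) \times \Aut(G_{\Gamma-\Delta}) \to \Aut(G_\Gamma)$ by
$$s(\alpha, \beta)(d \cdot h) := \alpha(d) \cdot \beta(h), \qquad d \in G_\Delta, \ h \in G_{\Gamma-\Delta}.$$
Because every $g \in G_\Gamma$ has a unique factorization $g = d \cdot h$ and elements of $G_\Delta$ commute with elements of $G_{\Gamma-\Delta}$, the map $s(\alpha, \beta)$ is a well-defined automorphism, and $s$ is a group homomorphism since composition acts factorwise. A short computation gives $\pi_1 \circ s(\alpha, \beta) = \alpha$ and $\pi_2 \circ s(\alpha, \beta) = \beta$, so $\pi \circ s = \mathrm{id}$. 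The standard semidirect product identification then yields $\Aut(G_\Gamma) \cong T_\Delta \rtimes (\Aut(G_\Delta) \times \Aut(G_{\Gamma-\Delta}))$.

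I do not anticipate any serious obstacle beyond bookkeeping. The one point to handle carefully is that a general $f \in \Aut(G_\Gamma)$ need not preserve $G_{\Gamma-\Delta}$ setwise: for instance the automorphisms $\rho_{w_i v_1}$ from the introduction send $w_i$ to $w_i v_1 \notin G_{\Gamma-\Delta}$. What $f$ does preserve is $G_{\Gamma-\Delta}$ modulo $G_\Delta$, and this is exactly the information encoded by $\pi_2$ under the identification $G_\Gamma / G_\Delta \cong G_{\Gamma-\Delta}$; being mindful of this distinction is essential for the kernel computation above.
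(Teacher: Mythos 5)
Your proposal is correct and follows essentially the same route as the paper: the kernel computation via $\ker(\pi_1)\cap\ker(\pi_2)$ is identical, and your explicit section $s(\alpha,\beta)(d\cdot h)=\alpha(d)\cdot\beta(h)$ is just the paper's ``canonical inclusion'' $i$ spelled out via the decomposition $G_\Gamma\cong G_\Delta\times G_{\Gamma-\Delta}$. Your closing remark that a general $f$ need not preserve $G_{\Gamma-\Delta}$ setwise is a worthwhile precision the paper leaves implicit (and note your final display correctly writes $\Aut(G_\Delta)$ where the paper's statement has a typo reading $\Aut(G_\Gamma)$).
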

\begin{proof}
	We follow the ideas of the proof of \cite[Lemma 7.7]{Leder}.
	
	Let $f\in\ker(\pi)=\ker(\pi_1)\cap\ker(\pi_2)$. Since $f\in\ker(\pi_1)$ we have $f(g)=g$ for all $g\in G_\Delta$. Further, since $f\in\ker(\pi_2)$ we have for all $h\in G_\Gamma$ 
	$$\pi_2(f)(hG_\Delta)=f(h)G_\Delta=hG_\Delta.$$ 
	Therefore $h^{-1}f(h)\in G_\Delta$ and hence $f\in T_\Delta$.
	
	It follows by definition of $T_\Delta$ that $T_\Delta\subseteq \ker(\pi)$.
	Hence we obtain a short exact sequence
	$$\left\{1\right\}\to T_\Delta\to\Aut(G_\Gamma)\to \Aut(G_\Delta)\times \Aut(G_{\Gamma-\Delta})\to\left\{1\right\}$$ 
	that splits, since for the canonical inclusion $i\colon \Aut(G_\Delta)\times \Aut(G_{\Gamma-\Delta})\to \Aut(G_\Gamma)$ we have $\pi\circ i=id$. Thus
	 $\Aut(G_\Gamma)\cong T_\Delta\rtimes (\Aut(G_\Gamma)\times \Aut(G_{\Gamma-\Delta})).$
\end{proof}

Now we collected all tools to prove Proposition E(ii).
\begin{proposition}
    \label{FiniteVertexGroups}
Let $G_\Gamma$ be a graph product. 
We decompose $G_\Gamma\cong G_{\Gamma_1}\times\ldots\times G_{\Gamma_n}$ as a direct product where the factors are directly indecomposable special parabolic subgroups. Assume that $G_{\Gamma_i}\ncong\Z/2\Z*\Z/2\Z$ for all $i=1,\ldots, n$.

If all vertex groups are finite, then a normal subgroup in $\Aut(G_\Gamma)$ is finite or full-sized.
\end{proposition}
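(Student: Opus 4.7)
The plan is to exploit the short exact sequence of Proposition~\ref{graphicallyirreducible}, which is applicable here because every finite group is graphically irreducible: if a finite group $G$ were isomorphic to $G_\Omega$ for a non-complete simplicial graph $\Omega$, then $G$ would contain the infinite subgroup $G_u*G_v$ for two non-adjacent vertices $u,v$, which is impossible. Hence $G_\Delta$ is characteristic in $G_\Gamma$ and we have
$$1 \to T_\Delta \to \Aut(G_\Gamma) \to \Aut(G_\Delta) \times \Aut(G_{\Gamma-\Delta}) \to 1.$$

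First I would align $V(\Delta)$ with the given decomposition $G_\Gamma \cong G_{\Gamma_1}\times\cdots\times G_{\Gamma_n}$. Writing $\Gamma$ as the join $\Gamma_1*\cdots*\Gamma_n$ of directly indecomposable subgraphs, a vertex $v \in V(\Gamma_i)$ satisfies $st(v) = V(\Gamma)$ if and only if $v$ is adjacent to every other vertex of $\Gamma_i$, and the direct indecomposability of $\Gamma_i$ forces this to happen only when $|V(\Gamma_i)|=1$. After reordering, $\Gamma_1,\ldots,\Gamma_k$ are exactly the singleton factors and supply $V(\Delta)$, while $\Gamma_{k+1},\ldots,\Gamma_n$ are the directly indecomposable factors of $\Gamma-\Delta$, each with at least two vertices and none isomorphic to $\Z/2\Z*\Z/2\Z$. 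Two consequences follow: $G_\Delta$ is a direct product of finitely many finite vertex groups, so $\Aut(G_\Delta)$ is finite, and the analogous ``Delta'' for $\Gamma-\Delta$ is empty, so Proposition~\ref{CenterlessAutG}(ii) applies to $\Aut(G_{\Gamma-\Delta})$ and every non-trivial normal subgroup of $\Aut(G_{\Gamma-\Delta})$ is full-sized.

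Next I would verify that $T_\Delta$ itself is finite. An element $f\in T_\Delta$ fixes $G_\Delta$ pointwise and sends each $g\in G_w$ (for $w\in V(\Gamma-\Delta)$) to $g\cdot\delta_w(g)$ for some map $\delta_w\colon G_w\to G_\Delta$; the homomorphism property of $f$ together with the commutation between $G_w$ and $G_\Delta$ forces each $\delta_w$ to be a group homomorphism, and $f$ is determined by the tuple $(\delta_w)_{w\in V(\Gamma-\Delta)}$. Since each $G_w$ and $G_\Delta$ are finite, there are only finitely many such tuples, so $T_\Delta$ embeds into a finite set.

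To conclude, let $N\trianglelefteq\Aut(G_\Gamma)$ be non-trivial and write $\bar N$ for its image in $\Aut(G_\Delta)\times\Aut(G_{\Gamma-\Delta})$. The projection of $\bar N$ to $\Aut(G_{\Gamma-\Delta})$ is normal there, hence either trivial or full-sized. In the full-sized case, preimages in $N$ of two free generators span an $F_2$-subgroup of $N$ (any two-generated group surjecting onto $F_2$ is $F_2$), and so $N$ is full-sized. In the remaining case, $\bar N\subseteq\Aut(G_\Delta)\times\{1\}$ is finite and $N\cap T_\Delta\subseteq T_\Delta$ is finite, so $N$ is an extension of a finite group by a finite group and is itself finite. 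The only step that needs real care is the description and finiteness of $T_\Delta$; the rest drops out of the short exact sequence together with Proposition~\ref{CenterlessAutG}(ii).
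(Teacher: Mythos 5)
Your proof is correct and follows essentially the same route as the paper: both use the short exact sequence $1\to T_\Delta\to\Aut(G_\Gamma)\to\Aut(G_\Delta)\times\Aut(G_{\Gamma-\Delta})\to 1$ from Proposition~\ref{graphicallyirreducible}, apply Proposition~\ref{CenterlessAutG} to the $\pi_2$-image, and conclude via finiteness of $T_\Delta$ and $\Aut(G_\Delta)$. You merely spell out details the paper leaves implicit (graphical irreducibility of finite vertex groups, the identification of $V(\Delta)$ with the singleton join factors, and the explicit finiteness argument for $T_\Delta$), which is fine.
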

\begin{proof}
    First we note that if $V(\Delta)$ is empty then by Proposition \ref{CenterlessAutG}(i) a non-trivial normal subgroup in $\Aut(G_\Gamma)$ is full-sized. 
    
    Let $N\trianglelefteq\Aut(G_\Gamma)$ be an infinite normal subgroup. By assumption all vertex groups are finite, thus in particular all vertex groups are graphically irreducible. By Proposition \ref{graphicallyirreducible} we have the exact sequence
    $$\left\{1\right\}\to T_\Delta\to\Aut(G_\Gamma)\overset{\pi=(\pi_1\times\pi_2)}{\rightarrow} \Aut(G_\Delta)\times \Aut(G_{\Gamma-\Delta})\to\left\{1\right\}.$$
    
    We claim that $N$ is full-sized. Indeed, if $\pi_2(N)\trianglelefteq \Aut(G_{\Gamma-\Delta})$ is non-empty, we know that $\pi_2(N)$ is full-sized and therefore $N$ is also full-sized.
    
    Now we turn to the case where $\pi_2(N)$ is trivial, then we have the following exact sequence
    $$\left\{1\right\}\to N\cap T_\Delta\to N\to \pi_1(N)\to\left\{1\right\}.$$
    The group $\Aut(G_\Delta)$ is finite because by assumption the vertex groups of $\Delta$ are finite. Thus $\pi_1(N)\subseteq\Aut(G_\Delta)$ is a finite group. Further, the group $N\cap T_\Delta$ is also finite, because $T_\Delta$ is a finite group. Hence $N$ is a finite normal subgroup.
\end{proof}

The next result will be useful in the proof of Theorem G. Namely,
in the case where the graph product $G_\Gamma$ decomposes as a direct product of characteristic subgroups, we completely understand the structure of $\Aut(G_\Gamma)$ via the automorphism groups of direct factors of $G_\Gamma$.
\begin{lemma}
\label{CharacteristicBothFactors}
	Let $G$ be a group that decomposes as direct product $G=G_1\times G_2$. If both factors $G_1$ and $G_2$ are characteristic subgroups of $G$, then ${\rm Aut}(G)\cong{\rm Aut}(G_1)\times{\rm Aut}(G_2)$.
\end{lemma}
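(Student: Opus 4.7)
The plan is to construct the isomorphism ${\rm Aut}(G)\to{\rm Aut}(G_1)\times{\rm Aut}(G_2)$ by restriction. Specifically, I would define
$$\Phi\colon {\rm Aut}(G)\longrightarrow {\rm Aut}(G_1)\times{\rm Aut}(G_2),\qquad \Phi(f):=\bigl(f|_{G_1},\,f|_{G_2}\bigr).$$
The characteristic hypothesis is used here: since $f(G_i)=G_i$ for $i=1,2$, the restrictions $f|_{G_i}$ are genuine automorphisms of $G_i$, so $\Phi$ is well-defined. It is immediately a group homomorphism, because restriction commutes with composition.

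Next I would verify injectivity. Every element of $G=G_1\times G_2$ can be written uniquely as $g_1g_2$ with $g_i\in G_i$, and any $f\in{\rm Aut}(G)$ satisfies $f(g_1g_2)=f(g_1)f(g_2)$. Hence if both restrictions are the identity, $f$ is the identity on $G$.

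For surjectivity, given $(\alpha_1,\alpha_2)\in{\rm Aut}(G_1)\times{\rm Aut}(G_2)$, I would define $f\colon G\to G$ by $f(g_1g_2):=\alpha_1(g_1)\alpha_2(g_2)$. This is well-defined by uniqueness of the decomposition, and the key point to check is that $f$ is a homomorphism: given $g_1g_2,h_1h_2\in G$ one has $g_2h_1=h_1g_2$ because elements of $G_1$ and $G_2$ commute in a direct product, and so
$$f(g_1g_2\cdot h_1h_2)=f(g_1h_1\cdot g_2h_2)=\alpha_1(g_1h_1)\alpha_2(g_2h_2)=\alpha_1(g_1)\alpha_2(g_2)\alpha_1(h_1)\alpha_2(h_2)=f(g_1g_2)f(h_1h_2),$$
where in the final step I again use that $\alpha_2(g_2)$ and $\alpha_1(h_1)$ lie in the commuting factors $G_2$ and $G_1$. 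Bijectivity of $f$ follows because the map constructed from $(\alpha_1^{-1},\alpha_2^{-1})$ is a two-sided inverse, and $\Phi(f)=(\alpha_1,\alpha_2)$ by construction.

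There is no real obstacle here; the only point to watch is that the characteristic hypothesis is genuinely needed to make $\Phi$ well-defined (otherwise, automorphisms may permute isomorphic direct factors and fail to restrict to each $G_i$), and the commutativity between $G_1$ and $G_2$ inside the direct product is what makes the inverse construction a homomorphism.
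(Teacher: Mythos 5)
Your proof is correct and is essentially the paper's argument read in the opposite direction: the paper embeds $\Aut(G_1)\times\Aut(G_2)$ into $\Aut(G)$ via $(f_1,f_2)\mapsto f$ and uses the characteristic hypothesis to show this embedding is surjective, whereas you define the inverse map by restriction and verify bijectivity directly. The mathematical content is the same, and your write-up is if anything slightly more careful about why the constructed map is a homomorphism.
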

\begin{proof}
    It is obvious that $\Aut(G_1)\times {\rm Aut}(G_2)$ can naturally be embedded in ${\rm Aut}(G)$ via the natural map $(f_1,f_2)\mapsto f$ where $f(v)=f_i(v)$ whenever $v\in G_i$ for $i=1,2$.
    
    We claim this map is surjective. Given a general $f\in{\rm Aut}(G)$, we know that $f(G_i)=G_i$ since both factors are characteristic. Hence we obtain $f_i\in {\rm Aut}(G_i)$ by setting $f_i:=f_{|G_i}$. It is easy to see that $(f_1,f_2)$ is mapped to $f$ under the natural embedding which is what we wanted to show. 
\end{proof}

Let us discuss one example of the above lemma in the context of graph products and their automorphism groups.
\begin{lemma}
\label{InfiniteFiniteCharacteristic}
    Let $G_\Gamma$ be a graph product of cyclic groups. If all vertex groups in $V(\Delta)$ are infinite cyclic and all vertex groups in $V(\Gamma)-V(\Delta)$ are finite, then $G_{\Gamma-\Delta}$ is characteristic and thus $\Aut(G_\Gamma)\cong\Aut(G_\Delta)\times \Aut(G_{\Gamma-\Delta})$.
\end{lemma}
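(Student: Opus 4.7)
The plan is to verify that both $G_\Delta$ and $G_{\Gamma-\Delta}$ are characteristic in $G_\Gamma$ and then invoke Lemma \ref{CharacteristicBothFactors}. Since every $v\in V(\Delta)$ has $st(v)=V(\Gamma)$, the induced subgraph $\Delta$ is a clique, so under the hypothesis $G_v\cong\Z$ for each $v\in V(\Delta)$ we get $G_\Delta\cong\Z^n$, which is abelian and torsion-free.

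First I would show $G_\Delta=Z(G_\Gamma)$, so that characteristicity is automatic. From the direct product decomposition we have $Z(G_\Gamma)=G_\Delta\times Z(G_{\Gamma-\Delta})$, so it suffices to check that $Z(G_{\Gamma-\Delta})=\{1\}$. By the recollection preceding Corollary \ref{NotDInfty} applied to $G_{\Gamma-\Delta}$, this reduces to verifying that no vertex $w\in V(\Gamma)-V(\Delta)$ satisfies $st(w)=V(\Gamma-\Delta)$ inside the graph $\Gamma-\Delta$. If such a $w$ existed, then together with the fact that every $v\in V(\Delta)$ is already linked to $w$ (since $st(v)=V(\Gamma)\ni w$), we would conclude $st(w)=V(\Gamma)$ in $\Gamma$, contradicting $w\notin V(\Delta)$.

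To show $G_{\Gamma-\Delta}$ is characteristic, I would identify it with the subgroup of $G_\Gamma$ generated by the set $T$ of all torsion elements. Because $G_\Delta\cong\Z^n$ is torsion-free, every torsion element of $G_\Gamma=G_\Delta\times G_{\Gamma-\Delta}$ has trivial $G_\Delta$-coordinate, so $T\subseteq G_{\Gamma-\Delta}$. Conversely, by hypothesis every generator $w_j\in V(\Gamma)-V(\Delta)$ has finite order, hence lies in $T$, and the $w_j$ generate $G_{\Gamma-\Delta}$. Thus $\langle T\rangle=G_{\Gamma-\Delta}$, and since any automorphism of $G_\Gamma$ permutes $T$, it preserves $\langle T\rangle$.

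Having shown both factors are characteristic, Lemma \ref{CharacteristicBothFactors} immediately yields $\Aut(G_\Gamma)\cong\Aut(G_\Delta)\times\Aut(G_{\Gamma-\Delta})$. I do not anticipate a substantive obstacle here; the only point requiring a little care is the vanishing $Z(G_{\Gamma-\Delta})=\{1\}$, which boils down to the combinatorial observation above that no vertex of $\Gamma-\Delta$ can be linked to every other vertex of $\Gamma-\Delta$.
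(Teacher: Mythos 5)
Your proof is correct, but the key step --- showing that $G_{\Gamma-\Delta}$ is characteristic --- is carried out differently from the paper. The paper takes each generator $w_i$ of a finite vertex group, notes that $f(w_i)$ has finite order, and invokes Green's theorem (\cite[Thm. 3.26]{Green}) that a finite-order element lies in a conjugate $gG_\Omega g^{-1}$ of a special parabolic subgroup with $\Omega$ a clique; after using the centrality of $G_\Delta$ to replace $g$ by an element of $G_{\Gamma-\Delta}$, it concludes $f(w_i)\in G_{\Gamma-\Delta}$. You instead identify $G_{\Gamma-\Delta}$ with the subgroup generated by the set of all torsion elements of $G_\Gamma$: torsion-freeness of the direct factor $G_\Delta\cong\Z^n$ forces every torsion element to have trivial $G_\Delta$-coordinate, while the finite vertex groups generating $G_{\Gamma-\Delta}$ consist of torsion elements, so the two subgroups coincide and characteristicity is automatic. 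Your route is more elementary --- it avoids Green's structural theorem entirely and uses only the direct product decomposition --- at the cost of being tailored to the situation where one factor is torsion-free and the other is generated by torsion; the paper's argument via clique parabolics containing finite-order elements is the one that adapts to more general graph-product settings. You also make explicit that $G_\Delta=Z(G_\Gamma)$ is characteristic (which Lemma \ref{CharacteristicBothFactors} needs for both factors); the paper leaves this implicit, having recorded in Section 3 that $Z(G_\Gamma)=G_\Delta$ whenever the vertex groups in $\Delta$ are abelian, and your combinatorial check that no vertex of $\Gamma-\Delta$ is linked to all others in $\Gamma-\Delta$ is a correct, if slightly redundant, way to confirm this.
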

\begin{proof}
    Let $f\in\Aut(G_\Gamma)$ and $w_i\in V(\Gamma)-V(\Delta)$. To show that $G_{\Gamma-\Delta}$ is characteristic, it is sufficient to prove that $f(w_i)\in G_{\Gamma-\Delta}$. It was proven in \cite[Thm. 3.26]{Green} that a finite order element is contained in a parabolic subgroup whose defining graph is a clique. Thus $f(w_i)\in gG_\Omega g^{-1}$ where $\Omega$ is a clique subgraph of $\Gamma-\Delta$ and $g\in G_\Gamma$. So there exists an element $h\in G_\Omega$ such that $f(w_i)=ghg^{-1}$. Since $G_\Delta$ commutes with $G_\Omega$ we can assume that $g\in G_{\Gamma-\Delta}$. Hence $ghg^{-1}\in G_{\Gamma-\Delta}$. 
    
    By Lemma \ref{CharacteristicBothFactors} it follows $\Aut(G_\Gamma)\cong\Aut(G_\Delta)\times \Aut(G_{\Gamma-\Delta})$.
\end{proof}

We end this chapter by an investigation of characteristic subgroups in right-angled Coxeter groups. Given a right-angled Coxeter group $W_\Gamma$, it was proven by Leder in \cite[Prop. 7.17]{Leder} that for $k\in\mathbb{N}$ the normal closure of the special subgroup $W_\Omega$ where $V(\Omega)=\left\{v\in V(\Gamma)\mid val(v)\geq k\right\}$ is a characteristic subgroup.

Let us consider the right-angled Coxeter group defined by the following graph $\Gamma$.

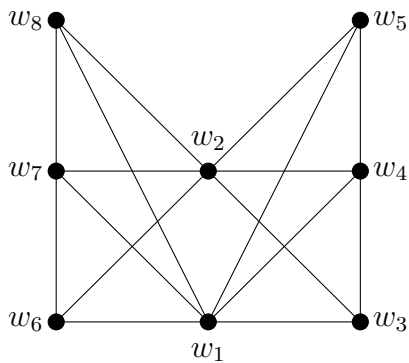
\begin{figure}[h]
	\begin{center}
		\begin{tikzpicture}
			\draw[fill=black]  (0,0) circle (3pt);
			\node at (0,-0.4) {$w_1$};
			\draw[fill=black]  (0,2) circle (3pt);
			\node at (0,2.4) {$w_2$};
			\draw[fill=black]  (2,0) circle (3pt);
			\node at (2.4,0) {$w_3$};
			\draw[fill=black]  (2,2) circle (3pt);
			\node at (2.4,2) {$w_4$};
			\draw[fill=black]  (2,4) circle (3pt);
		    \node at (2.4,4) {$w_5$};
			\draw (0,0)--(2,0);	
			\draw (0,0)--(2,2);	
			\draw (0,0)--(2,4);
			\draw (0,2)--(2,0);
			\draw (0,2)--(2,2);	
			\draw (0,2)--(2,4);	
			\draw (2,0)--(2,4);
			\draw[fill=black]  (-2,0) circle (3pt);
			\node at (-2.4,0) {$w_6$};
			\draw[fill=black]  (-2,2) circle (3pt);
			\node at (-2.4,2) {$w_7$};
			\draw[fill=black]  (-2,4) circle (3pt);
		    \node at (-2.4,4) {$w_8$};
		    \draw (-2,0)--(-2,4);
		    \draw (-2,0)--(0,0);
		    \draw (-2,0)--(0,2);
		    \draw (-2,2)--(0,0);
		    \draw (-2,2)--(0,2);
		    \draw (-2,4)--(0,0);
		    \draw (-2,4)--(0,2);
		\end{tikzpicture}
	\end{center}
\caption{Graph $\Gamma$.}
\end{figure}

The vertices $w_1$ and $w_2$ have valency $6$ and all the other vertices have smaller valencies. Thus the normal closure of the subgroup $\langle w_1, w_2\rangle$ is characteristic in $W_\Gamma$. Further, the special parabolic subgroup $\langle w_1, w_2\rangle\cong\Z/2\Z* \Z/2\Z$ is a direct factor of $W_\Gamma$, thus the normal closure of $\langle w_1, w_2\rangle$ is equal to $\langle w_1, w_2\rangle$ and this special parabolic subgroup is characteristic in $W_\Gamma$.

\begin{lemma}
Let $W_\Gamma$ be a centerless right-angled Coxeter group. We decompose $W_\Gamma$ as a direct product of directly indecomposable special parabolic subgroups $W_\Gamma=W_{\Gamma_1}\times\ldots\times W_{\Gamma_n}$. 

If at least one of the factors is isomorphic to $\Z/2\Z*\Z/2\Z$, then $\prod_{j\in J}W_{\Gamma_j}$ where $J:=\left\{j\in\left\{1,\ldots,n\right\}\mid W_j\cong \Z/2\Z*\Z/2\Z\right\}$ is a characteristic subgroup.

In particular, 	$\prod_{j\in J}W_{\Gamma_j}$ as a subgroup of ${\rm Inn}(W_\Gamma)$ is an infinite virtually abelian normal subgroup in $\Aut(W_\Gamma)$.
\end{lemma}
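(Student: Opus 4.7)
The plan is to show that $H := \prod_{j \in J} W_{\Gamma_j}$ is the unique maximum non full-sized normal subgroup of $W_\Gamma$; once this is established, being characteristic is automatic since that property is preserved under every automorphism. The ``In particular'' part will then follow because characteristic subgroups of a centerless group embed as normal subgroups of the full automorphism group via inner automorphisms.

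First I would check that $H$ is itself a non full-sized normal subgroup of $W_\Gamma$. Normality is immediate since $H$ is a direct factor. For non full-sizedness, I would use that $\Z/2\Z*\Z/2\Z=\langle a,b\mid a^2=b^2=1\rangle$ contains $\langle ab\rangle\cong\Z$ as a normal subgroup of index two, so each factor $W_{\Gamma_j}$ with $j\in J$ is virtually infinite cyclic. A finite direct product of virtually cyclic groups is virtually abelian, and virtually abelian groups cannot contain $F_2$; in particular $H$ is infinite and virtually abelian.

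Next I would invoke Corollary \ref{ProductNormal} to see that every non full-sized normal subgroup of $W_\Gamma$ already lies inside $H$. Since $W_\Gamma$ is centerless and all vertex groups are abelian, we have $W_\Delta=Z(W_\Gamma)=\{1\}$, that is $V(\Delta)=\emptyset$. The indexing set $J$ in our statement agrees with the one in Corollary \ref{ProductNormal}, because $|V(\Gamma_j)|\geq 2$ is automatic once $W_{\Gamma_j}\cong\Z/2\Z*\Z/2\Z$. Combined with the previous paragraph, this identifies $H$ as the unique maximum non full-sized normal subgroup of $W_\Gamma$. Hence for any $f\in\Aut(W_\Gamma)$ the image $f(H)$ is again a non full-sized normal subgroup, so $f(H)\subseteq H$, and applying the same argument to $f^{-1}$ gives $f(H)=H$.

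For the final assertion, I would use that $W_\Gamma$ is centerless so the canonical map $W_\Gamma\to\Inn(W_\Gamma)$ is an isomorphism, which lets us view $H$ inside $\Inn(W_\Gamma)\subseteq\Aut(W_\Gamma)$ as an infinite virtually abelian subgroup. Normality in $\Aut(W_\Gamma)$ then follows from the characteristic property together with the identity $f\circ\gamma_h\circ f^{-1}=\gamma_{f(h)}$ already exploited in Lemma \ref{GeneralCenterless}: if $h\in H$ then $f(h)\in H$, so $\gamma_{f(h)}$ lies in the image of $H$ in $\Inn(W_\Gamma)$. I do not expect a serious obstacle in this argument; the only point requiring care is the matching of the two index sets $J$ and the verification that $H$ really is non full-sized, both of which reduce to the virtual cyclicity of $\Z/2\Z*\Z/2\Z$.
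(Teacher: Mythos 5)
Your argument is correct, but it takes a genuinely different route from the paper. The paper proves that $\prod_{j\in J}W_{\Gamma_j}$ is characteristic by a graph-theoretic detour: it shows that the vertices generating the dihedral factors are exactly the vertices of valency $|V(\Gamma)|-2$ (using a small combinatorial lemma that a graph all of whose vertices have valency at least $n-2$ is a join of pieces with at most two vertices, plus centerlessness to exclude valency $n-1$), and then invokes Leder's result that the normal closure of the special subgroup generated by all vertices of valency at least $k$ is characteristic. You instead identify $H=\prod_{j\in J}W_{\Gamma_j}$ as the unique maximal non full-sized normal subgroup of $W_\Gamma$: it is normal (a direct factor) and non full-sized (virtually $\Z^{|J|}$), and by Corollary \ref{ProductNormal} with $V(\Delta)=\emptyset$ every non full-sized normal subgroup lies in $H$; since an automorphism sends a non full-sized normal subgroup to another one, $f(H)\subseteq H$ and likewise for $f^{-1}$, so $H$ is characteristic. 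Your matching of the two index sets is also fine, since an infinite dihedral special parabolic factor of a right-angled Coxeter group necessarily has exactly two vertices (compare abelianizations). Your approach is more self-contained --- it reuses only results already established in Section 2 of the paper and avoids both the valency computation and the external reference to Leder --- whereas the paper's approach yields the extra information that this subgroup sits inside Leder's general family of valency-defined characteristic subgroups. The final step (viewing $H$ inside $\Inn(W_\Gamma)$ and using $f\circ\gamma_h\circ f^{-1}=\gamma_{f(h)}$) coincides with the paper's.
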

\begin{proof}
We write $V(\Gamma)=\left\{w_1,\ldots, w_n\right\}$.
First, we need to show the following combinatorial lemma from graph theory: Let $\Omega$ denote a finite simplicial graph with $n$ vertices. Assume that every vertex has at least valency $n-2$. Then $\Omega=A_1\ast A_2\ast...\ast A_k$ where every $A_j$ is an induced subgraph of $\Omega$ and has at most $2$ vertices for $j=1,...,k$.  

This is true since starting with a vertex $v_0\in V(\Omega)$, $v_0$ is connected to at least $n-2$ other vertices, of which there are $n-1$. If $v_0$ is connected to all of them, set $A_1=\{v_0\}$ and continue with $\Omega-\langle v_0\rangle$. If not, there is a vertex $v_1\in V(\Omega)$, which is not connected to $v_0$ by an edge. However $v_1$ has valency $n-2$, hence it is connected to all other vertices apart from $v_0$. In other words $\Omega=\langle v_0,v_1\rangle \ast (\Omega-\langle\{v_0,v_1\}\rangle)$. Since $\Omega$ is finite, this proves the combinatorial lemma in finitely many steps.

Note that this proof actually shows that the existence of two vertices $x,y$ which are not connected by an edge and which have valency $n-2$ always implies that $\Gamma$ decomposes as a join $\Gamma= (\{x,y\},\emptyset)\ast (\Gamma-(\{x,y\},\emptyset))$.

To show the result about right-angled Coxeter groups simply notice that given $W_{\Gamma_j}\cong \Z/2\Z\ast \Z/2\Z$ with canonical generators $w_i$ and $w_j$, the vertices $w_i$ and $w_j$ in the defining graph for the right-angled Coxeter group have valency $|V(\Gamma)|-2$. Furthermore there are no vertices with valency $|V(\Gamma)|-1$ since $W_\Gamma$ is centerless. Hence $\prod_{j\in J}W_{\Gamma_j}$ is nothing but $\langle V(D)\rangle $ where $D =\{v\in V(\Gamma)|val(v)\geq |V(\Gamma)|-2\}$. Thus,  \cite[Prop. 7.17]{Leder} implies that $\prod_{j\in J}W_{\Gamma_j}$ is a characteristic subgroup in $W_\Gamma$.

Furthermore, since $W_\Gamma$ is centerless we have $\Inn(W_\Gamma)\cong W_\Gamma$ and therefore we can consider the special parabolic subgroup $\prod_{j\in J}W_{\Gamma_j}$ as a subgroup of $\Inn(W_\Gamma)$. Let $f\in\Aut(W_\Gamma)$ and $\gamma_{w}\in\Inn(W_\Gamma)$ where $w\in\prod_{j\in J}W_{\Gamma_j}$.

We have 
$$f\circ \gamma_w\circ f^{-1}=\gamma_{f(w)}.$$
Since $\prod_{j\in J}W_{\Gamma_j}$ is a characteristic subgroup, we know that $f(w)\in \prod_{j\in J}W_{\Gamma_j}$. Hence  $\prod_{j\in J}W_{\Gamma_j}$ is a normal subgroup in $\Aut(W_\Gamma)$ and it is indeed virtually abelian since each direct factor is isomorphic to the infinite dihedral group $\Z/2\Z*\Z/2\Z$ which is virtually abelian.
\end{proof}
\section{The automorphism group of a graph product of cyclic groups}
In this chapter we specialise to the case of finitely generated abelian groups. First we remark that this is essentially the same as assuming the vertex groups are cyclic. Then we discuss certain types of automorphisms that typically occur in this situation and recall an important result by Corredor-Gutierrez which shows these automorphisms generate the entire automorphism group. Finally we show that torsion subgroups in the automorphism group of a graph product of cyclic groups are finite and that no subgroup is isomorphic to $\mathbb{Q}$.

Let $G_\Gamma$ be a graph product of finitely generated abelian groups. We remark that $G_\Gamma$ is isomorphic to
a graph product of cyclic groups. More precisely, given a finitely generated abelian vertex group $G_v$ we know that $G_v$ is isomorphic to the direct sum $\Z^l\oplus\Z/n_1\Z \oplus ... \oplus \Z/n_k\Z$. Thus replacing the vertex $v$ by a complete graph on $k+l$ vertices with vertex groups isomorphic to $\Z/n_i\Z$ or $\Z$ respectively yields an isomorphic graph product. Moreover this graph product is isomorphic to a graph product of cyclic groups where the finite groups have prime power order. In order to achieve that we simply replace the vertex with vertex group $\Z/n\Z$ where $n=p_1^{k_1}\cdot...\cdot p_r^{k_r}$ by a complete graph on $r$ vertices with vertex groups $\Z/p_j^{k_j}\Z$ where $p_i$ are prime numbers for $i=1,\ldots, r$.

It was proven by Corredor and Gutierrez in \cite{CorredorGutierrez} that the automorphism group of a graph product of cyclic groups is generated by a finite collection of different types of automorphisms: labeled graph automorphisms, local automorphisms, partial conjugations and dominated
transvections. In the following we explain these different types of automorphisms that will prove to be useful especially when calculating the center.

Let $G_\Gamma$ be a graph product of cyclic groups and $\Aut(G_\Gamma)$ the automorphism group of $G_\Gamma$. We start by introducing \emph{labeled graph automorphisms}. The group of labeled graph automorphisms is defined as follows $\Aut(\Gamma):=\left\{\sigma\in Sym(\Gamma)\mid G_v\cong G_{\sigma(v)}\text{ for all }v\in V(\Gamma)\right\}$. It is not hard to show that each element in $\Aut(\Gamma)$ canonically induces an automorphism in $\Aut(G_\Gamma)$, therefore we call also this induced automorphism a \emph{labeled graph automorphism}. 

Now we investigate automorphisms in $\Aut(G_\Gamma)$ that come from the automorphism group of the vertex groups. Given a non-identity automorphism in $\Aut(G_v)$ where $v\in V(\Gamma)$, this automorphism induces an automorphism of the entire graph product $G_\Gamma$ by sending the  generator $w$ of $G_w$, $v\neq w$ to itself. We call this kind of automorphism a \emph{local automorphism}. 

Let $v\in V(\Gamma)$ and let $C\subseteq\Gamma$ be a connected component of the graph induced by the vertex set $V(\Gamma)-st(v)$. We define a map $\rho_v^{C}\colon V(\Gamma)\to G_\Gamma$ as follows: $\rho_c^{C}(z)=vzv^{-1}$ for all $z\in V(C)$ and $\rho_x^C(w)=w$ for $w\in V(\Gamma)-V(C)$. This map induces an automorphism of $G_\Gamma$ which we call a \emph{partial conjugation}.

The last type of automorphisms that we are going to introduce are dominated transvections. Let $v, w \in V(\Gamma)$ be two different vertices. We define a map $\rho_{vw^j}\colon V(\Gamma)\to G_\Gamma$ as follows: $\rho_{vw^k}(v)=vw^k$ and identity on all other vertices in $V(\Gamma)$. We should be careful here, since this map does not always induce a well-defined automorphism of $G_\Gamma$. But 
if $ord(v)=\infty$ and $lk(v)\subseteq st(w)$, then $\rho_{vw}$ induces  a well-defined automorphism of $G_\Gamma$.
Further, if $ord(v)=p^i$ and $ord(w)=p^j$ and $st(v)\subseteq st(w)$, then we have two cases:
if $j\leq i$, then $\rho_{vw}$ is a well-defined automorphism of $G_\Gamma$ and if $j>i$, then $\rho_{vw^{p^{j-i}}}$ is a well-defined automorphism (see \cite[Prop. 5.5]{CorredorGutierrez}). In all these cases we call the induced automorphism of $G_\Gamma$ a \emph{dominated transvection}.

\begin{theorem}(\cite{CorredorGutierrez})
\label{GenSet}
	Let $G_\Gamma$ be a graph product of cyclic groups. The group $\Aut(G_\Gamma)$ is generated by labeled graph automorphisms, local automorphisms, partial conjugations and dominated transvections.
	In particular, $\Aut(G_\Gamma)$ is finitely generated.
\end{theorem}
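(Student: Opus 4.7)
The plan is to follow the Laurence--Servatius peak-reduction strategy originally devised for right-angled Artin groups, as extended by Corredor and Gutierrez to graph products of cyclic groups. Fix the canonical generating set $X=\{x_v\mid v\in V(\Gamma)\}$ and let $H\le\Aut(G_\Gamma)$ be the subgroup generated by the four families listed in the theorem. The goal is to show $H=\Aut(G_\Gamma)$ by complexity reduction on the Green normal-form length $\ell(f):=\sum_{v\in V(\Gamma)}|f(x_v)|$.

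First I would verify that each of the four families actually defines an automorphism. Labeled graph automorphisms and local automorphisms are immediate from the presentation. For a partial conjugation $\rho_v^{C}$ the only defining relators possibly affected are the commutators $[v,z]=1$ for $z\in lk(v)$, but $lk(v)$ is disjoint from $V(C)$ by definition of $C$ as a connected component of $\Gamma-\mathrm{st}(v)$, so every relator is preserved. For a dominated transvection $\rho_{vw^j}$ the link- (respectively star-) containment in its definition ensures that every relator $[v,z]=1$ is mapped to a relator that already holds in $G_\Gamma$, while the prime-power case distinction on exponents guarantees that the order of $v$ is preserved. Finite generation of $\Aut(G_\Gamma)$ then follows immediately, since $V(\Gamma)$ is finite, there are only finitely many ordered pairs of vertices, finitely many components of each $\Gamma-\mathrm{st}(v)$, and each $\Aut(G_v)$ is finite (or $\{\pm 1\}$ if $G_v\cong\Z$).

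For generation I would proceed by induction on $\ell(f)$. In the base case $\ell(f)=|V(\Gamma)|$, each $f(x_v)$ is a single letter of the correct order, and examining the induced map on vertex groups shows that such an $f$ is a labeled graph automorphism composed with local automorphisms and, possibly, an inner automorphism (which decomposes as a product of partial conjugations, one per component of $\Gamma-\mathrm{st}(v)$ for each relevant vertex $v$). For the inductive step with $\ell(f)>|V(\Gamma)|$, the heart of the argument is a Whitehead-type lemma producing either (a) a boundary letter of some $f(x_v)$ lying outside $\mathrm{st}(v)$, which can be absorbed into a suitable partial conjugation to strictly decrease $\ell$, or (b) a vertex $y$ and an exponent $j$ satisfying the admissibility conditions, so that post-composing $f$ with the transvection $\rho_{vy^j}$ strictly shortens $f(x_v)$.

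The main obstacle is proving this Whitehead/peak-reduction lemma in case (b). The difficulty is to show that the graph-theoretic obstruction to the existence of a shortening transvection cannot arise if $f$ is actually an automorphism; this is where the prime-power hypothesis plays an essential role. When $\mathrm{ord}(v)$ and $\mathrm{ord}(w)$ are coprime no transvection $x_v\mapsto x_v x_w^j$ is admissible, so the reduction must instead pass through chains of transvections whose exponents are adjusted according to the case distinction $j\le i$ versus $j>i$. Once this combinatorial lemma is established, the remaining bookkeeping---separating labeled graph automorphisms from local automorphisms and from inner automorphisms---is routine, and the induction closes.
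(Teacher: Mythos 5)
This statement is quoted from Corredor--Gutierrez; the paper itself offers no proof, only the citation, so there is nothing internal to compare your argument against. Your outline does correctly identify the strategy of the cited source: a Laurence--Servatius style induction on the total normal-form length of the images of the generators, with peak reduction supplying at each stage either a partial conjugation or a dominated transvection that strictly shortens the automorphism. The verification that the four families are well defined and the deduction of finite generation are also fine.

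However, as a proof your proposal has a genuine gap, and you name it yourself: the Whitehead/peak-reduction lemma in case (b) is asserted to exist but never established. That lemma is not bookkeeping --- it is the entire content of the theorem. One must show that for any automorphism $f$ with $\ell(f)>|V(\Gamma)|$ there really is an admissible elementary automorphism (respecting the link/star containments and the order-compatibility conditions on exponents) whose composition with $f$ strictly decreases $\ell$, and that no automorphism can be ``stuck'' at a non-minimal length. In the presence of torsion vertex groups this requires a careful analysis of which boundary syllables of the $f(x_v)$ can be cancelled, and the admissibility constraint $\mathrm{st}(v)\subseteq \mathrm{st}(w)$ together with the exponent adjustment $\rho_{vw^{p^{j-i}}}$ is exactly where the argument can fail if done naively; Corredor--Gutierrez devote the bulk of their paper to this. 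Two smaller points: the theorem as stated allows arbitrary cyclic vertex groups, so you must first invoke the reduction (described in Section 4 of this paper) replacing a vertex of composite order by a clique of prime-power-order vertices before the exponent case distinction even makes sense; and in your base case $\ell(f)=|V(\Gamma)|$ each $f(x_v)$ is a single syllable $x_{\sigma(v)}^{k_v}$, so $f$ is already a labeled graph automorphism composed with local automorphisms --- no inner automorphism can occur there, since conjugation strictly increases syllable length for vertices outside the relevant star.
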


Since we are now working with cyclic vertex groups, the center of $G_\Gamma$ is $G_\Delta$. Therefore we will denote the subgroup $T_\Delta$ of ${\rm Aut}(G_\Gamma)$ by $T_Z$ where the $Z$ should emphasize that for $f\in T_Z$ and $g\in G_\Gamma$ the element $g^{-1}f(g)$ is now always a central element.  

\begin{corollary}\label{TZfinite}
	Let $G_\Gamma$ be a graph product of cyclic groups. Let $\Delta^{fin}\subseteq \Delta$ be the induced subgraph generated by $\left\{v\in V(\Delta)\mid ord(G_v)<\infty\right\}$. We define a subgroup $T_Z^{fin}\subseteq T_Z$ as follows
	$$T_Z^{fin}:=\left\{f\in T_Z\mid\text{ for all }w_i\in V(\Gamma)-V(\Delta): w_i^{-1}f(w_i)\in G_{\Delta^{fin}}\right\}.$$
	
	The subgroup $T_Z^{fin}\subseteq\Aut(G_\Gamma)$ is normal and it is non-trivial if and only if $V(\Delta^{fin})\neq\emptyset$ and there exists a vertex $w\in V(\Gamma)-V(\Delta)$ such that $ord(G_w)=\infty$ or there exist a vertex $v\in V(\Delta^{fin})$ and $w\in V(\Gamma)-V(\Delta)$ such that $gcd(|G_v|, |G_w|)=p$. 
\end{corollary}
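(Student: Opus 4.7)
My approach is to identify $T_Z^{fin}$ as the torsion subgroup of the abelian group $T_Z$; normality then follows formally, and the non-triviality equivalence reduces to constructing nontrivial finite-order elements of $T_Z$ under the stated hypotheses.

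First, since $\Delta$ is a clique (any two vertices of $V(\Delta)$ are adjacent because each has $V(\Gamma)$ as its star), $G_\Delta$ is a direct product of cyclic groups, hence abelian, with torsion subgroup precisely $G_{\Delta^{fin}}$. Any $f\in T_Z$ is the identity on $G_\Delta=Z(G_\Gamma)$ and satisfies $f(w_i)=w_i z_i^{(f)}$ with $z_i^{(f)}\in G_\Delta$; since each $z_i^{(f)}$ is central, a direct computation yields $(f_1\circ f_2)(w_i)=w_i z_i^{(f_1)}z_i^{(f_2)}$, showing that $T_Z$ is abelian and that $f\mapsto(z_i^{(f)})_i$ is a group homomorphism into $G_\Delta^m$. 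Thus $f$ has finite order iff every $z_i^{(f)}$ does, which is equivalent to each $z_i^{(f)}\in G_{\Delta^{fin}}$, i.e., $f\in T_Z^{fin}$. Being the torsion subgroup of an abelian group, $T_Z^{fin}$ is characteristic in $T_Z$; and because cyclic groups are graphically irreducible, Proposition \ref{graphicallyirreducible} makes $T_Z$ normal in $\Aut(G_\Gamma)$, whence $T_Z^{fin}$ is normal in $\Aut(G_\Gamma)$.

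For the non-triviality criterion, if some $f\in T_Z^{fin}$ is nontrivial then some $z_i^{(f)}\neq 1$ lies in $G_{\Delta^{fin}}$, so immediately $V(\Delta^{fin})\neq\emptyset$. If $ord(w_i)=\infty$ we are in the first sub-case; otherwise $ord(w_i)=p^\ell$ is a prime power (by the blow-up reduction recalled at the beginning of Chapter 4), the order of $z_i^{(f)}$ must divide $p^\ell$, and writing $z_i^{(f)}$ in terms of the cyclic factors of $G_{\Delta^{fin}}$ forces a nontrivial component to come from a vertex $v\in V(\Delta^{fin})$ with $|G_v|$ a power of the same prime $p$, which is the second sub-case. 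Conversely, assume $V(\Delta^{fin})\neq\emptyset$. If some $w\in V(\Gamma)-V(\Delta)$ has infinite order, then for any $v\in V(\Delta^{fin})$ the dominated transvection $\rho_{wv}$ is well-defined since $lk(w)\subseteq V(\Gamma)=st(v)$, and it lies nontrivially in $T_Z^{fin}$ because $w^{-1}\rho_{wv}(w)=v\neq 1$. If instead $v\in V(\Delta^{fin})$ and $w\in V(\Gamma)-V(\Delta)$ have orders $p^k$ and $p^\ell$ for the same prime $p$, then by the Corredor--Gutierrez criterion recalled before Theorem \ref{GenSet} either $\rho_{wv}$ (when $k\leq\ell$) or $\rho_{wv^{p^{k-\ell}}}$ (when $k>\ell$) is a well-defined dominated transvection, which yields a non-identity element of $T_Z^{fin}$ since the relevant power of $v$ is nontrivial in $G_v$.

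The main technical obstacle is the prime-power bookkeeping in the finite--finite subcase: one must select the exponent $j$ so that $\rho_{wv^j}$ genuinely extends to an automorphism of $G_\Gamma$, while still arranging $w^{-1}\rho_{wv^j}(w)=v^j\neq 1$ in $G_{\Delta^{fin}}$. Matching this carefully with the hypotheses in the definition of dominated transvection is the only delicate step; everything else is a clean combination of the parametrisation of $T_Z$ by central tuples $(z_i)_i$ with the observation that torsion subgroups of abelian groups are characteristic.
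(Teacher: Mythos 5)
Your proof is correct and follows essentially the same route as the paper: both identify $T_Z^{fin}$ as the torsion subgroup of the finitely generated abelian normal subgroup $T_Z$ (whence normality is formal), and both reduce non-triviality to the existence of a well-defined dominated transvection $\rho_{wv^k}$ with $v\in V(\Delta^{fin})$. You merely spell out the prime-power bookkeeping that the paper dismisses with ``it is easy to see''.
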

\begin{proof}
	By definition, an automorphism $f\in T_Z$ acts trivially on $G_\Delta$. Hence the order of $f$ is finite if and only if for every vertex $w\in V(\Gamma)-V(\Delta)$ the element $z_w$ defined by $z_w:=w^{-1}f(w)$ has finite order. Hence $T_Z^{fin}$ is nothing but the torsion part of the finitely generated abelian normal subgroup $T_Z$ of Aut$(G_\Gamma)$ and therefore normal itself. 
	
	Since it is generated by the dominated transvections $\rho_{wv^k}$ for vertices $w\in V(\Gamma)-V(\Delta)$ and $v\in V(\Delta)$ it is non-trivial if and only if at least one of these is well-defined. It is easy to see that this is the case if and only if the conditions in the corollary are satisfied.
\end{proof}

Concerning automatic continuity we will need the facts that torsion subgroups in $\Aut(G_\Gamma)$ are finite and $\Aut(G_\Gamma)$ does not have $\mathbb{Q}$ as a subgroup. The first property follows from the fact that ${\rm Out}(G_\Gamma):=\Aut(G_\Gamma)/\Inn(G_\Gamma)$ satisfies a stronger version of the Tits alternative, namely every subgroup in ${\rm Out}(G_\Gamma)$ is either full-sized or virtually polycyclic (see \cite[Corollary 4.4]{SaleSusse}). Hence, a torsion subgroup in ${\rm Out}(G_\Gamma)$ is virtually polycyclic. 

Recall that, by definition, a group $G$ is \emph{polycyclic} if there exists a descending chain of subgroups $\left\{1\right\}=G_n\subseteq G_{n-1}\ldots\subseteq G_1=G$ such that $G_{i+1}$ is normal in $G_i$ and the quotient $G_i/G_{i+1}$ is cyclic for $i=1,\ldots, n-1$.

\begin{corollary}
\label{TorsionFinite}
Let $\Aut(G_\Gamma)$ be the automorphism group of a graph product of cyclic groups and $H\subseteq\Aut(G_\Gamma)$ be a subgroup. If $H$ is a torsion group, then $H$ is finite.
\end{corollary}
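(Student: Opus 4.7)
The plan is to use the short exact sequence
$$1 \to \Inn(G_\Gamma) \to \Aut(G_\Gamma) \to \Out(G_\Gamma) \to 1$$
and show that, intersected with $H$, both the kernel and the image are finite.

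First I would handle the image. The image $\bar H$ of $H$ in $\Out(G_\Gamma)$ is again torsion, so by the Sale--Susse strengthening of the Tits alternative cited just above the statement, $\bar H$ is virtually polycyclic. A polycyclic torsion group is finite: along a polycyclic series each cyclic factor is a torsion cyclic group and hence finite, so an induction on the length of the series gives finiteness, and passing to a finite index polycyclic subgroup upgrades this to the virtually polycyclic case. Thus $\bar H$ is finite.

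Next I would handle the kernel $H\cap\Inn(G_\Gamma)$, which is a torsion subgroup of $\Inn(G_\Gamma)$. Since all vertex groups are abelian, $Z(G_\Gamma)=G_\Delta$ is a direct factor of $G_\Gamma$, and consequently $\Inn(G_\Gamma)\cong G_\Gamma/G_\Delta\cong G_{\Gamma-\Delta}$, again a graph product of cyclic groups. It therefore suffices to prove the auxiliary claim that every torsion subgroup of a graph product of cyclic groups is finite. I would proceed by induction on $|V(\Gamma)|$, with $|V(\Gamma)|=1$ trivial. If $G_\Gamma\cong G_{\Gamma_1}\times G_{\Gamma_2}$ is directly decomposable with both $\Gamma_i$ proper induced subgraphs, then a torsion subgroup $K$ embeds into the product $\pi_1(K)\times\pi_2(K)$ of its two torsion projections, each of which is finite by induction. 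If $G_\Gamma$ is directly indecomposable with $|V(\Gamma)|\geq 2$, I would apply Proposition~\ref{TitsAlternative}: a torsion $K$ cannot be isomorphic to $\Z$, cannot be isomorphic to $\Z/2\Z\ast\Z/2\Z$ (which contains the infinite-order element $ab$), and cannot contain $F_2$; hence $K$ is contained in a proper parabolic subgroup $gG_\Omega g^{-1}$ with $|V(\Omega)|<|V(\Gamma)|$, and the induction step closes.

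Combining the two halves, the restricted sequence
$$1 \to H\cap\Inn(G_\Gamma) \to H \to \bar H \to 1$$
has finite kernel and finite cokernel, so $H$ itself is finite. I expect the auxiliary claim to be the main obstacle: one must rule out infinite locally finite phenomena (such as Pr\"ufer-type subgroups) inside $G_\Gamma$ \emph{a priori}, and the Antolin--Minasyan Tits alternative of Proposition~\ref{TitsAlternative} is exactly what forces any torsion subgroup down to a strictly smaller parabolic piece so that the induction can proceed.
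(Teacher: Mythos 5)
Your proof is correct, and its overall skeleton is exactly the paper's: restrict the sequence $1\to\Inn(G_\Gamma)\to\Aut(G_\Gamma)\to\Out(G_\Gamma)\to 1$ to $H$, use the Sale--Susse strengthened Tits alternative to make the image a torsion virtually polycyclic (hence finite) group, and treat $H\cap\Inn(G_\Gamma)$ as a torsion subgroup of $\Inn(G_\Gamma)\cong G_{\Gamma-\Delta}$. The one place you genuinely diverge is the kernel: the paper disposes of torsion subgroups of $G_{\Gamma-\Delta}$ in a single line by citing Lemma~3.6 of Kramer--Varghese, which places any torsion subgroup of a graph product inside a conjugate of a clique parabolic, hence inside a finitely generated abelian group where torsion subgroups are finite. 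You instead prove the same auxiliary claim from scratch by induction on $|V(\Gamma)|$, splitting off direct factors in the decomposable case and using Proposition~\ref{TitsAlternative} in the directly indecomposable case to force a torsion subgroup into a proper parabolic (the cases $\Z$, $\Z/2\Z\ast\Z/2\Z$ and $F_2\hookrightarrow K$ being excluded by the presence of infinite-order elements). Your route is self-contained relative to results already quoted in the paper and makes explicit why Pr\"ufer-type locally finite subgroups cannot occur, at the cost of a few extra paragraphs; the paper's route is shorter but outsources precisely that point to an external lemma. Both are valid.
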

\begin{proof}
   First we show that a torsion polycyclic group is finite. Let $T$ be a torsion polycyclic group. 
   Since $T$ is polycyclic, there exists a chain of subgroups	$\left\{1\right\}=T_n\subseteq T_{n-1}\ldots\subseteq T_1=T$ such that $T_{i+1}$ is normal in $T_i$ and $T_i/T_{i+1}$ is cyclic. 
	
    Now we prove that $T_{n-1}$ is finite. We know that $T_{n-1}/T_n$ is cyclic, so there exists $x_{n-1}\in T$ such that $\langle x_{n-1}\rangle =T_{n-1}$. By assumption $T$ is a torsion group, thus $x_{n-1}$ has finite order and therefore $T_{n-1}$ is finite. The same strategy shows that $T_{n-2}$ and then $T_j$ for $j=1,\ldots, n$ are finite groups. Hence $T_1=T$ is finite. 
	
	Moreover, a torsion virtually polycyclic group is also finite. More precisely, let $T$ be a torsion virtually polycyclic group. Then there exists a normal subgroup $N\trianglelefteq T$ such that $T/N$ is finite and $N$ is a torsion polycyclic group. We have
	$\left\{1\right\}\to N\to T\to T/N\to\left\{1\right\}$.
   Since $N$ is a torsion polycyclic group, this group is finite. Thus $N$ and $T/N$ are finite groups and therefore $T$ is also finite.
   
    Now we show that a torsion subgroup $H$ in $\Aut(G_\Gamma)$ is finite. Let $\pi\colon{\rm Aut}(G_\Gamma)\to{\rm Out}(G_\Gamma)$ be the canonical projection.
	Since $\pi(H)$ is not full-sized, we know that $\pi(H)$ is a torsion virtually polycyclic group which is finite by the above arguments.  So we have
		$\pi(H)\cong H/{H\cap{\rm Inn}(G_\Gamma)}$.
		The group $H\cap{\rm Inn}(G_\Gamma)$ is a torsion subgroup in ${\rm Inn}(G_\Gamma)\cong G_{\Gamma-\Delta}$. Lemma 3.6 in \cite{KramerVarghese} implies that this group is contained in a parabolic subgroup whose defining graph is a clique. Hence $H\cap{\rm Inn}(G_\Gamma)$ is finite and  therefore $H$ has to be finite too.
\end{proof}

The next lemma shows that the group of rational numbers can not be embedded in $\Aut(G_\Gamma)$. 
\begin{lemma}\label{noQ}
Let $G_\Gamma$ be a graph product of cyclic groups.
 The group ${\rm Aut}(G_\Gamma)$ does not have a subgroup isomorphic to $\mathbb{Q}$.
\end{lemma}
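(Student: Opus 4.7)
The plan is to derive a contradiction from a hypothetical embedding $\varphi\colon\mathbb{Q}\hookrightarrow\Aut(G_\Gamma)$ via two successive reductions: first push the copy of $\mathbb{Q}$ into $\Inn(G_\Gamma)$ using the Tits alternative for $\Out(G_\Gamma)$, and then identify $\Inn(G_\Gamma)$ with a graph product of cyclic groups and rule out $\mathbb{Q}$ by residual finiteness.

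For the first reduction, I would compose $\varphi$ with the canonical projection $\pi\colon\Aut(G_\Gamma)\twoheadrightarrow\Out(G_\Gamma)$ and analyse the image $H:=\pi(\varphi(\mathbb{Q}))$. As a homomorphic image of $\mathbb{Q}$, the group $H$ is abelian and therefore not full-sized, so by the strong Tits alternative of Sale--Susse (\cite[Corollary 4.4]{SaleSusse}), already invoked in the proof of Corollary \ref{TorsionFinite}, the group $H$ must be virtually polycyclic. On the other hand $H$ is a quotient of the divisible group $\mathbb{Q}$, hence divisible. Since a virtually polycyclic abelian group is finitely generated and a finitely generated divisible abelian group is trivial, we obtain $H=\{1\}$, which means $\varphi(\mathbb{Q})\subseteq\Inn(G_\Gamma)$.

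For the second reduction I would use the standard decomposition $G_\Gamma\cong G_\Delta\times G_{\Gamma-\Delta}$ together with the equality $Z(G_\Gamma)=G_\Delta$, which holds because all vertex groups are abelian. This yields $\Inn(G_\Gamma)\cong G_\Gamma/Z(G_\Gamma)\cong G_{\Gamma-\Delta}$, itself a graph product of cyclic groups. So the task reduces to showing that $\mathbb{Q}$ does not embed into $G_{\Gamma-\Delta}$.

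For the final step I would invoke residual finiteness: graph products of residually finite groups are residually finite by Green's thesis \cite{Green}, and cyclic groups are trivially residually finite, so $G_{\Gamma-\Delta}$ is residually finite. Since $\mathbb{Q}$ admits no non-trivial homomorphism to a finite group (any such image would be a finite divisible abelian group, hence trivial), it cannot embed into any residually finite group, producing the desired contradiction. The only conceptually non-routine move here is the combination of the Sale--Susse Tits alternative with divisibility of $\mathbb{Q}$ that collapses the image in $\Out(G_\Gamma)$; everything else is a clean application of facts already established or cited in the paper.
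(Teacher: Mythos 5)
Your argument is correct, but it takes a genuinely different route from the paper. The paper's proof is a three-line chain: $G_\Gamma$ is linear by \cite[Cor. 3.6]{HsuWise}, hence residually finite by Malcev; $\Aut$ of a finitely generated residually finite group is residually finite by Baumslag's theorem; and a residually finite group contains no copy of $\mathbb{Q}$ (\cite[Lemma 5.1]{KeppelerMoellerVarghese}). This establishes the strictly stronger fact that $\Aut(G_\Gamma)$ itself is residually finite. You instead mirror the structure of the paper's proof of Corollary \ref{TorsionFinite}: you kill the image in $\Out(G_\Gamma)$ first and then handle $\Inn(G_\Gamma)$ separately. Your collapsing step is sound --- the image of $\mathbb{Q}$ in $\Out(G_\Gamma)$ is abelian, hence not full-sized, hence virtually polycyclic by Sale--Susse, hence finitely generated (virtually polycyclic groups satisfy the maximal condition), and a finitely generated divisible abelian group is trivial --- and the identification $\Inn(G_\Gamma)\cong G_\Gamma/Z(G_\Gamma)\cong G_{\Gamma-\Delta}$ together with residual finiteness of graph products of cyclic groups (Green) finishes it. What the paper's route buys is brevity and a stronger conclusion at the cost of invoking Baumslag's theorem on automorphism groups; what your route buys is that it never needs to know anything about $\Aut(G_\Gamma)$ beyond the Sale--Susse Tits alternative for $\Out(G_\Gamma)$, which the paper has already imported for Corollary \ref{TorsionFinite}, so it recycles existing machinery rather than adding a new external ingredient.
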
	
\begin{proof}
 The group $G_\Gamma$ is linear, see \cite[Cor. 3.6]{HsuWise} and as such a group it is is residually finite \cite{Malcev}. Further, it was proven in \cite{Baumslag} that the automorphism group of a finitely generated residually finite group is residually finite. Hence $\Aut(G_\Gamma)$ is residually finite. Lemma 5.1 in  \cite{KeppelerMoellerVarghese} implies that a residually finite group does not have $\mathbb{Q}$ as a subgroup .
Hence ${\rm Aut}(G_\Gamma)$ does not have a subgroup isomorphic to $\mathbb{Q}$.
\end{proof}

\section{Proof of Theorem B}

We start by a characterisation of non full-sized normal subgroups in $\Aut(A_\Gamma)$ where both vertex sets $V(\Delta)=\left\{v_1\ldots, v_n\right\}$ and $V(\Gamma)-V(\Delta)=\left\{w_1,\ldots, w_m\right\}$ are non-empty. In that case $A_\Gamma$ has a non-trivial center and it is non abelian. In particular all groups, except the left and the right one, in the following exact sequence are non-trivial:
$$\left\{1\right\}\to T_Z\to\Aut(A_\Gamma)\overset{\pi=(\pi_1\times\pi_2)}{\rightarrow}\Aut(A_\Delta)\times \Aut(A_{\Gamma-\Delta})\to\left\{1\right\}.$$
Note also that $T_Z$ is generated by dominated transvections $\rho_{w_iv_j}$ for $i=1,\ldots, m$ and $j=1,\ldots, n$ whose order is infinite and any pair of these dominated transvections generate a free abelian group of rank $2$, hence $T_Z\cong \Z^{n\cdot m}$.

For the following proposition we recall the definition of the automorphism $\iota$ of a right-angled Artin group $A_\Gamma$. We define $\iota (v_i)=v_i^{-1}$ and $\iota (w_j)=w_j$ for $1\leq i\leq n$ and $1\leq j\leq m$ and extend this to a homomorphism.
\begin{proposition}
	\label{NormalArtin}
	Let $A_\Gamma$ be a right-angled Artin group. Let $V(\Delta)=\left\{v_1,\ldots, v_n\right\}$ and $V(\Gamma)-V(\Delta)=\left\{w_1,\ldots, w_m\right\}$ be both non-empty. Let $N\trianglelefteq\Aut(A_\Gamma)$ be a non-trivial normal subgroup. 
	
	If $N$ is non full-sized, then $N\subseteq T_Z\rtimes \left\{id, \iota\right\}\cong \Z^{n\cdot m}\rtimes\Z/2\Z$. Moreover, if  $N\subseteq T_Z$, then $N\cong \Z^l$ where $n\leq l\leq n\cdot m$.	
\end{proposition}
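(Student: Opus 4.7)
The plan is to exploit the split short exact sequence
$$\{1\}\to T_Z\to\Aut(A_\Gamma)\overset{\pi=(\pi_1,\pi_2)}{\longrightarrow}\Aut(A_\Delta)\times\Aut(A_{\Gamma-\Delta})\to\{1\}$$
and to analyse $\pi_1(N)$ and $\pi_2(N)$ separately. Both images are non full-sized because $N$ is: if $F_2$ embedded in some quotient of $N$, the freeness of $F_2$ would allow one to lift the two generators and obtain an embedding $F_2\hookrightarrow N$.

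First I would show that $\pi_2(N)=\{id\}$. The graph $\Gamma-\Delta$ has no vertex with full star in itself: if $w\in V(\Gamma-\Delta)$ were joined to every other vertex of $\Gamma-\Delta$, then, since the vertices of $\Delta$ are joined to every vertex of $\Gamma$, we would obtain $st(w)=V(\Gamma)$, contradicting $w\notin V(\Delta)$. Hence $A_{\Gamma-\Delta}$ is centerless, so Proposition A applies and any non-trivial normal subgroup of $\Aut(A_{\Gamma-\Delta})$ is full-sized; this forces $\pi_2(N)=\{id\}$. For $\pi_1(N)\trianglelefteq\Aut(A_\Delta)\cong\GL_n(\Z)$, Lemma \ref{GLnoF2} yields $\pi_1(N)\subseteq\{I,-I\}$. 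Because $\iota$ satisfies $\pi(\iota)=(-I,id)$, the preimage $\pi^{-1}(\{I,-I\}\times\{id\})$ is exactly $T_Z\rtimes\{id,\iota\}$, and $N$ lies inside it.

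For the rank assertion, suppose $N\subseteq T_Z\cong\Z^{n\cdot m}$, so $N$ is free abelian of some rank $l\leq n\cdot m$. Identify $T_Z$ with $\bigoplus_{i=1}^{m}A_\Delta$ by sending $\alpha$ to $(w_1^{-1}\alpha(w_1),\ldots,w_m^{-1}\alpha(w_m))$, and let $q_i\colon T_Z\to A_\Delta\cong\Z^n$ denote the $i$-th projection. Under the splitting, $\Aut(A_\Delta)\cong\GL_n(\Z)$ embeds into $\Aut(A_\Gamma)$ and a direct computation shows that conjugation on $T_Z$ becomes the diagonal action $g\cdot(z_1,\ldots,z_m)=(g(z_1),\ldots,g(z_m))$; normality of $N$ then makes every $q_i(N)$ a $\GL_n(\Z)$-invariant subgroup of $\Z^n$. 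Pick $i$ with $q_i(N)\neq 0$. Because the $\GL_n(\Z)$-orbit of any non-zero $v\in\Z^n$ whose entries have $\gcd$ equal to $k$ consists of all vectors $ku$ with $u$ primitive, and such vectors generate $k\Z^n$, every non-trivial $\GL_n(\Z)$-invariant subgroup of $\Z^n$ has the form $k\Z^n\cong\Z^n$. Hence $q_i(N)\cong\Z^n$ and the surjection $q_i|_N\colon N\twoheadrightarrow\Z^n$ gives $l=\mathrm{rank}(N)\geq n$.

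The main obstacle I anticipate is the rank lower bound, which rests on the classification of $\GL_n(\Z)$-invariant subgroups of $\Z^n$ together with the explicit form of the conjugation action on $T_Z$; the rest is a tidy assembly of Proposition A, Lemma \ref{GLnoF2} and properties of the splitting already established.
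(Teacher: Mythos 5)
Your proof is correct, and while the first half coincides with the paper's argument, the rank lower bound is obtained by a genuinely different route. Like the paper, you reduce via the split exact sequence, kill $\pi_2(N)$ using Proposition A applied to the centerless group $A_{\Gamma-\Delta}$, pin $\pi_1(N)$ inside $\{I,-I\}$ via Lemma \ref{GLnoF2}, and identify $\pi^{-1}(\{I,-I\}\times\{id\})$ with $T_Z\rtimes\{id,\iota\}$; that part is essentially verbatim the paper's. For the inequality $l\geq n$, however, the paper argues by hand: it records the exponent vectors $h_i(f)\in\Z^n$, conjugates $f$ by labeled graph automorphisms permuting $V(\Delta)$ to obtain all permutations of a given exponent vector, and then, in the degenerate case, conjugates by the single dominated transvection $\rho_{v_1v_2}$ to break the symmetry. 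You instead observe that the section $\Aut(A_\Delta)\hookrightarrow\Aut(A_\Gamma)$ acts on $T_Z\cong\bigoplus_{i=1}^m A_\Delta$ diagonally by conjugation, so each coordinate image $q_i(N)$ is a $\GL_n(\Z)$-invariant subgroup of $\Z^n$, and any non-zero such subgroup contains $k\Z^n$ (orbit of a primitive multiple), hence has rank $n$; the surjection $q_i|_N$ then forces $l\geq n$. Your version is cleaner and more robust: the paper's dichotomy ``either the permutation orbit generates $\Z^n$ or all entries are equal'' is not quite exhaustive as stated (the orbit can also degenerate when the entries sum to zero), whereas exploiting the full $\GL_n(\Z)$-action avoids any case analysis. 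One cosmetic remark: you only need the containment $k\Z^n\subseteq q_i(N)$, not the exact classification $q_i(N)=k\Z^n$ (which would require an extra line taking the gcd over all elements), but this does not affect the argument.
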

\begin{proof}
 	Let $N\trianglelefteq\Aut(A_\Gamma)$ be a non full-sized normal subgroup. Since $N$ is non full-sized, $\pi_1(N)\trianglelefteq\Aut(A_\Delta)\cong\GL_n(\Z)$ is non full-sized as well. Lemma \ref{F2} implies that $\pi_1(N)=\left\{id\right\}$ or $\pi_1(N)=\left\{id, \iota\right\}$. Further, $\pi_2(N)$ is also a normal subgroup of $\Aut(A_{\Gamma-\Delta})$ and is non full-sized. Hence by Proposition \ref{CenterlessAutG} the normal subgroup $\pi_2(N)$ is trivial.
	
	Now we consider the short exact sequence 
		$$\left\{1\right\}\to T_Z\to\Aut(A_\Gamma)\overset{\pi=(\pi_1\times\pi_2)}{\rightarrow}\Aut(A_\Delta)\times \Aut(A_{\Gamma-\Delta})\to\left\{1\right\}$$
	
	and restrict the map $\pi$ to $N$. We obtain two possibilities: 
	\begin{itemize}
	\item $1\to N\cap T_Z\to N\twoheadrightarrow \left\{id \right\}\times\left\{id\right\}\to\left\{1\right\}$
	In that case we have $N=N\cap T_Z$.
	
	\item $1\to N\cap T_Z\to N\twoheadrightarrow \left\{id, \iota\right\}\times\left\{id\right\}\to\left\{1\right\}$
	In that case we have $N\subseteq \langle T_Z, \iota\rangle\cong T_Z\rtimes\left\{id,\iota\right\}\cong\Z^{n\cdot m}\rtimes \Z/2\Z$.
	\end{itemize}

    It remains to prove that in the first case $N\cap T_Z$ is isomorphic to the free abelian group of rank $l$ where $n\leq l\leq n\cdot m$.
    
    Since we need to explicitly work with dominated transvections,
	let us first consider an easy example given by the dominated transvection $\rho_{w_1v_1}$.  Conjugating $\rho_{w_1v_1}$ with the labeled graph automorphism permuting the vertices of $\Delta$ as $\sigma$ in Sym$(n)$ yields the dominated transvection $\rho_{w_1v_{\sigma(1)}}$. Similarly if there exists a labeled graph automorphism permuting (some of) the vertices in $\Gamma-\Delta$ stemming from a $\tau\in {\rm Sym}(m)$, we obtain the dominated transvection $\rho_{w_{\tau(1)}v_1}$ via conjugation. So in this case the normal closure of $\rho_{w_1v_1}$ is isomorphic to $\mathbb{Z}^l$ for some $n\leq l\leq n\cdot m$, where the upper bound comes from the observation that $T_Z\cong \mathbb{Z}^{n\cdot m}$.  Thus  $N\cap T_Z$ is always isomorphic to some $\Z^k$ for some $k\leq n\cdot m$, so all that remains to show is that $k\geq n=|V(\Delta)|$.
	
	We know that given $w_i\in V(\Gamma)-V(\Delta)$, the automorphism $f\in N$ must satisfy $f(w_i)=w_iz_i$ for some $z_i\in A_\Delta$. This means $f(w_i)=w_iv_1^{k_1}\cdot ...\cdot v_n^{k_n}$ ($k_j\in \Z$) and we obtain a homomorphism $h_i\colon N\to \mathbb{Z}^n$ by collecting the exponents, i.e. $h_i(f):=(k_1,...,k_n)$. Note that this is a homomorphism since all elements of $T_Z$ are products of dominated transvections. 
	
	Suppose that $f$ is not the identity homomorphism. Then at least one of the maps $h_i$ is non-trivial. Let $\sigma\in{\rm Sym}(n)$ denote a permutation and $\alpha$ denote the corresponding labeled graph automorphism permuting the vertices of $A_\Delta$. Due to completeness of $\Delta$ there always exists such a labeled graph automorphism. Then $h_i(\alpha\circ f\circ \alpha^{-1})=(k_{\sigma(1)},...,k_{\sigma(n)})$. If $\{(k_{\sigma(1)},...,k_{\sigma(n)})\mid \sigma\in{\rm Sym}(n)\}$ is a generating system for $\mathbb{Z}^n$ then we are done. If not that means that $k_1=k_2=...=k_n\neq 0$ since $h_i(f)\neq 0$ and $h_i(\alpha\circ f\circ \alpha^{-1})=(k_{\sigma(1)},...,k_{\sigma(n)})$. Now we conjugate $f$ by the dominated transvection $\rho_{v_1v_2}$. Note that there are always at least two vertices in $\Delta$ because $\langle k_1\rangle$ is isomorphic to $\Z$ for every $k_1\neq 0$. Evaluating the conjugation of $f$ by $\rho_{v_1v_2}$ on $w_1$ gives us $\rho_{v_1v_2}\circ f\circ \rho_{v_1v_2}^{-1}(w_i)=\rho_{v_1v_2}\circ f(w_i)=w_iz_iv_2^{k_1}$. Thus we obtain $h_i(\rho_{v_1v_2}\circ f\circ \rho_{v_1v_2}^{-1})=(k_1,k_2+ k_1,\ldots ,k_n)$. But now not all entries are equal, thus the group generated by all permutations is isomorphic to $\mathbb{Z}^n$ which completes the proof.
\end{proof}

Now we have all ingredients to prove Theorem B.

\begin{proof}[Proof of Theorem B]
	Let $N\trianglelefteq\Aut(A_\Gamma)$ be a non full-sized normal subgroup. Since algebraic properties of $\Aut(A_\Gamma)$ depend on the combinatorial structure of the defining graph $\Gamma$ we have to consider several cases:
	
	First we investigate the case where $V(\Gamma)-V(\Delta)=\emptyset$. In that case  $\Aut(A_\Gamma)\cong\GL_n(\Z)$ and Lemma \ref{GLnoF2} implies that $N$ is trivial or isomorphic to the center of $\GL_n(\Z)$ which is equal to $\left\{I, -I \right\}$.
	
	The next case deals with the situation where $V(\Delta)$ is empty. Proposition \ref{CenterlessAutG} implies directly that $N$ is trivial.
	
	Now we consider the last case: assume that
	$V(\Delta)\neq\emptyset$ and $V(\Gamma)-V(\Delta)\neq\emptyset$ and let $N$ be maximal. By Proposition \ref{NormalArtin} we have  $N\subseteq T_Z\rtimes\left\{id, \iota\right\}$. Using the generating set of $\Aut(A_\Gamma)$ from Theorem \ref{GenSet} it is straightforward to prove that $T_Z\rtimes\left\{id, \iota\right\}$ is normal in $\Aut(A_\Gamma)$. 
	
	If $N$ is a minimal, non-trivial normal subgroup in $\Aut(A_\Gamma)$, then Proposition \ref{NormalArtin} implies that $N\subseteq T_Z$ and $N\cong \Z^l$ where $n\leq l\leq n\cdot m$. More precisely, first note that $\{id,\iota\}$ is not normal in $\Aut(A_\Gamma)$ since $\rho_{w_1v_1}\circ \iota\circ \rho_{w_1v_1}^{-1}\notin \left\{id, \iota\right\}$, thus $N\cap T_Z$ is a non-trivial normal subgroup. Hence, if $N\nsubseteq T_Z$, then $N\cap T_Z$ is a smaller non-trivial normal subgroup of $\Aut(A_\Gamma)$. This finishes the proof.
\end{proof}

In particular, Theorem B implies Corollary C from the introduction. 

\section{Proof of Theorem G}

Before we move on to the proof of Theorem G let us recall a result about the structure of the automorphism group of a finitely generated abelian group. If $\Gamma$ is a complete graph, then $G_\Gamma$ is a finitely generated abelian group. Thus there exist $n\in\mathbb{N}$ and a finite abelian group $T$ such that $G_\Gamma\cong\Z^n\times T$. Since $T$ is a characteristic subgroup we always have two group homomorphisms:
$$\rho_1\colon\Aut(\Z^n\times T)\to\Aut(\Z^n)\cong\GL_n(\Z)\text{ and
}\rho_2\colon\Aut(\Z^n\times T)\to\Aut(T).$$
Note that both maps are  surjective. Further, by construction, the map $\rho=(\rho_1\times\rho_2)$ is also surjective. Hence, the  sequence
$$\left\{1\right\}\to\ker(\rho)\to\Aut(\Z^n\times T)\overset{\rho}{\rightarrow}\GL_n(\Z)\times\Aut(T)\to\left\{1\right\}$$ is exact. Additionally, it was proven in \cite[Lemma 2.16]{SaleSusse} that $\ker(\rho)\cong T^n$.

Thus if $\Gamma=\Delta$  and has at least one infinite cyclic vertex group and at least one finite vertex group, then $\ker(\rho)$ is a non-trivial finite normal subgroup in $\Aut(G_\Gamma)$.

Using similar methods as in the proof of Theorem B, we prove Theorem G.

\begin{proof}[Proof of Theorem G]
It follows directly from Proposition \ref{CenterlessAutG} that if $V(\Delta)=\emptyset$, then any finite normal subgroup in $\Aut(G_\Gamma)$ is trivial.

Let us now assume that $V(\Gamma)-V(\Delta)$ is empty. The goal is to prove that  $\Aut(G_\Gamma)$ does not have non-trivial finite normal subgroups if and only if $V(\Gamma)=\left\{v_1\right\}$ and $ord(G_{v_1})=2$.  
The if statement is clear since $\Aut(\Z/2\Z)=\{id\}$. For the only if statement we differentiate between two cases.
First assume that all vertex groups are infinite. Then $\Aut(G_\Gamma)\cong \GL_n(\Z)$ and hence has a non-trivial finite normal subgroup, the center, which is isomorphic to $\Z/2\Z$. In the other case there either exists at least one infinite vertex group and at least one finite vertex group, in which case $\ker(\rho)$ is a non-trivial finite normal subgroup or all vertex groups are finite. If all vertex groups are finite, then $\Aut(G_\Gamma)$ is itself finite and this group is non-trivial if and only if $\Gamma\neq\left\{v_1\right\}$ where $ord(v_1)=2$.  
    
Now assume that $V(\Delta)=\left\{v_1,\ldots, v_n\right\}$ is non-empty and $V(\Gamma)-V(\Delta)=\left\{w_1,\ldots, w_m\right\}$ is non-empty. We have the splitting short exact sequence:
$$\left\{1\right\}\to T_Z\to {\rm Aut}(G_\Gamma)\overset{\pi=(\pi_1\times\pi_2)}{\rightarrow} {\rm Aut}(G_\Delta)\times {\rm Aut}(G_{\Gamma-\Delta})\to\left\{1\right\}$$
Assume that all vertex groups in $V(\Delta)$ are infinite cyclic and that there is an infinite vertex group $G_w$ in $\Gamma-\Delta$. Let $N\trianglelefteq {\rm Aut}(G_\Gamma)$ denote a finite normal subgroup. We know that $T_Z$ is a finitely generated abelian group. If $G_\Delta$ is torsion free, it follows immediately that $T_Z$ is torsion free, hence $T_Z\cap N=\{1\}$. Furthermore due to above results we know that $\pi_1(N)\subseteq\left\{id, \iota\right\}$ and $\pi_2(N)=\{id\}$. Hence $N$ can only be isomorphic to $\Z/2\Z$ and be generated by an element $f$ which inverts all vertices in $\Delta$ and maps every vertex $w\in V(\Gamma-\Delta)$ to $wz_w$ for some $z_w\in G_\Delta$. But such an $f$ does not commute with the dominated transvection $\rho_{wv}$, since $\rho_{wv}\circ f (w)=wz_wv$ and $f\circ \rho_{wv}(w)=wz_wv^{-1}$. Hence $f$ has to be trivial.

Note that if all vertex groups in $V(\Gamma)-V(\Delta)$ are finite, then Lemma \ref{CharacteristicBothFactors} implies that $\Aut(G_\Gamma)\cong\Aut(G_\Delta)\times\Aut\left(G_{\Gamma-\Delta}\right)$ and hence the subgroup $\left\{id, \iota\right\}$ is normal in $\Aut(G_\Gamma)$. We proved that if all vertex groups in $V(\Delta)$ are infinite cyclic, then $\Aut(G_\Gamma)$ does not have non-trivial finite normal subgroups if and only if there exists an infinite vertex group $G_{w_j}$.

If there exists $v_i\in V(\Delta)$ such that $G_{v_i}$ is finite, then we have to differ two cases and prove the following statements:
		\begin{enumerate}
		    \item[(a)] If $n=1$ and $ord(G_{v_1})=2$, then $\Aut(G_\Gamma)$ does not have non-trivial finite normal subgroups if and only if  $ord(G_{w_j})<\infty$  and $2\nmid ord(G_{w_j})$ for $j=1,\ldots, m$.  
		    \item[(b)] If $n\geq 2$ or $n=1$ and $ord(G_{v_1})\neq 2$, then $\Aut(G_\Gamma)$ does have non-trivial finite normal subgroups.
		\end{enumerate} 
		
		For the only if statement in (a) simply note that $T_Z^{fin}$ is a finite normal subgroup of Aut$(G_\Gamma)$ and it is non-trivial if one of the conditions is satisfied, see Corollary \ref{TZfinite}. If those conditions are not satisfied it is easy to check that $G_\Delta$ and $G_{\Gamma-\Delta}$ are characteristic subgroups and hence by Lemma \ref{CharacteristicBothFactors} we have Aut$(G_\Gamma)={\rm Aut}(G_\Delta)\times {\rm Aut}(G_{\Gamma-\Delta})$. But Aut$(G_\Delta)=\{1\}$ and $\Aut(G_{\Gamma-\Delta})$ does not have non-trivial finite normal subgroups  which finishes off (a).

Finally consider case (b). If there exists at least one dominated transvection $\rho_{wv}$ for a vertex $w\in V(\Gamma-\Delta)$ and a vertex $v\in V(\Gamma)$ with $ord(v)<\infty$, then $T_Z^{fin}$ is a non-trivial, finite normal subgroup of Aut$(G_\Gamma)$. If no such dominated transvection exists, then Aut$(G_\Gamma)={\rm Aut}(G_\Delta)\times {\rm Aut}(G_{\Gamma-\Delta})$. We know that a non-trivial finite normal subgroup in Aut$(G_\Delta)$ exists which in turn needs to be normal in Aut$(G_\Gamma)$ due to the direct product structure which proves (b).
\end{proof}

\section{The center of the automorphism group of a graph product}
In this chapter we study the center of the automorphism group of a graph product of cyclic groups. First we discuss two known but useful results and then show that the center is always finite. We obtain conditions when the automorphism group is centerless and in particular prove Corollary D. Afterwards we explicitly calculate the center of the automorphism group if the defining graph is complete. Then we study the case where the defining graph is a join of one vertex with the rest and also calculate the center. This is then applied to right-angled Coxeter groups and an explicit example is discussed where the center of the automorphism group is isomorphic to $(\Z/2\Z)^{12}$.

We start this chapter by recalling a result regarding the center of the automorphism group that is well known. Since we could not find a good reference, we include the proof of it here. 
\begin{lemma}
\label{center}
    Let $G$ be a group and $\Aut(G)$ the automorphism group. If $f\in Z(\Aut(G))$, then for $g\in G$ there exists $z_g\in Z(G)$ such that $f(g)=gz_g$.
    
    In particular, if $G$ is centerless, then the center of $\Aut(G)$ is trivial.
\end{lemma}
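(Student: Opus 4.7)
The plan is to exploit that every element of $Z(\Aut(G))$ must commute with the inner automorphisms, and then read off the conclusion by evaluating the commutation relation on an arbitrary element of $G$.

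First I would fix $f \in Z(\Aut(G))$ and $g \in G$, and consider the inner automorphism $\gamma_g \in \Aut(G)$ given by $\gamma_g(h) = ghg^{-1}$. Since $f$ is central in $\Aut(G)$, the identity $f \circ \gamma_g = \gamma_g \circ f$ holds. Applying both sides to an arbitrary $h \in G$ and using that $f$ is a homomorphism yields $f(g)\,f(h)\,f(g)^{-1} = g\,f(h)\,g^{-1}$, which rearranges to $(g^{-1}f(g))\,f(h)\,(g^{-1}f(g))^{-1} = f(h)$.

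Next, because $f$ is an automorphism, the element $f(h)$ ranges over all of $G$ as $h$ does, so the element $z_g := g^{-1}f(g)$ commutes with every element of $G$; that is, $z_g \in Z(G)$. Rewriting gives $f(g) = g z_g$ with $z_g \in Z(G)$, which is exactly the claim. The in-particular statement then follows immediately: if $Z(G) = \{1\}$, then $z_g = 1$ for every $g$, hence $f(g) = g$ for all $g \in G$ and $f = \mathrm{id}$, so $Z(\Aut(G)) = \{\mathrm{id}\}$.

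There is no real obstacle here; the only subtle point worth flagging in the write-up is the use of surjectivity of $f$ to ensure that $z_g$ centralizes \emph{all} of $G$ (rather than just the image of $f$, which of course coincides with $G$, but is the step that makes the argument work). The entire proof is a two-line manipulation once the central element is tested against inner automorphisms.
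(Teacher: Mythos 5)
Your proof is correct and follows essentially the same route as the paper: both test the central automorphism $f$ against the inner automorphisms $\gamma_g$, deduce that $g^{-1}f(g)$ centralizes $f(G)$, and use surjectivity of $f$ to conclude $g^{-1}f(g)\in Z(G)$. The subtle point you flag (needing $f(G)=G$) is exactly the step the paper handles via $Z(f(G))=Z(G)$.
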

\begin{proof}
    For $g\in G$ we denote by $\gamma_{g}\in\Inn(G)$ the conjugation by $g$. Let us consider the  equation
    $\gamma_{g}\circ f\circ \gamma_{g^{-1}}=f$ that holds, since $f$ commutes with all automorphisms of $G$.
    Thus for $x\in G$ we obtain 
    $\gamma_{g}\circ f\circ \gamma_{g^{-1}}(x)=f(x)$ which is equivalent to 
    $gf(g)^{-1}f(x)f(g)g^{-1}=f(x)$.
    This shows that $gf(g)^{-1}$ commutes with $f(x)$ and since $x$ is an arbitrary element in $G$ it follows that $gf(g)^{-1}\in Z(f(G))=Z(G)$. Hence, there exists $z_g\in Z(G)$ such that $f(g)=gz_g.$
    
    Furthermore, if $Z(G)=\left\{1\right\}$, then $z_g=1$ for all $g\in G$ and therefore $f$ is equal to identity. Thus, if $G$ is centerless, then the center of $\Aut(G)$ is trivial.
\end{proof}

The center of the automorphism group of a free abelian group and of a finite abelian group have been explicitly calculated, for completeness we state the results here. 
\begin{proposition}
\label{centerFreeabelianAndFinite}
Let $G_\Gamma$ be a graph product of cyclic groups. Assume that $\Gamma$ is a clique. 
	\begin{enumerate}
		\item If all vertex groups are infinite cyclic, then $$Z(\Aut(G_\Gamma)\cong Z({\rm GL}_n(\Z))=\left\{I, -I\right\}.$$
		\item If all vertex groups are finite, then $$Z(\Aut(G_\Gamma))\cong\prod_{p\in J}(\Z/p^{l_p}\Z)^*$$
		where $J:=\left\{p\in\mathbb{P}\mid \text{ there exists a vertex }v\in V(\Gamma)\text{ with } p|ord(v)\right\}$ and $p^{l_p}$ is the maximal order of a vertex group in $\Gamma$ with $p$ power order.
		\end{enumerate}
\end{proposition}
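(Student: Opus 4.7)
Part (i) is essentially a classical computation: if $\Gamma$ is a clique on $n$ vertices with infinite cyclic vertex groups, then $G_\Gamma\cong\Z^n$, so $\Aut(G_\Gamma)\cong\GL_n(\Z)$; the center of $\GL_n(\Z)$ consists of scalar matrices $\lambda I_n$ with $\lambda$ a unit of $\Z$, giving $\{I_n,-I_n\}$.

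The substance lies in part (ii). Since $\Gamma$ is a clique and all vertex groups are finite cyclic of prime power order, $G_\Gamma$ is a finite abelian group. The first step is to decompose $G_\Gamma=\bigoplus_{p\in J}G_p$ into its $p$-primary components. Each $G_p$ is a characteristic subgroup of $G_\Gamma$, so an iterated application of Lemma \ref{CharacteristicBothFactors} yields $\Aut(G_\Gamma)\cong\prod_{p\in J}\Aut(G_p)$ and hence $Z(\Aut(G_\Gamma))\cong\prod_{p\in J}Z(\Aut(G_p))$. This reduces the problem to computing the center of $\Aut(G_p)$ for a single prime $p$.

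Fix $p\in J$ and write $G_p=\bigoplus_{i=1}^{k}\Z/p^{b_i}\Z$ with $b_1\geq\cdots\geq b_k$ and $b_1=l_p$, canonical generators $e_1,\ldots,e_k$. Every $\lambda\in(\Z/p^{l_p}\Z)^*$ induces a scalar automorphism $m_\lambda(x)=\lambda x$: it is well-defined because $\lambda$ reduces to a unit in every $\Z/p^{b_i}\Z$, and it is central because $\Z$-linear maps commute with every homomorphism of an abelian group. The assignment $\lambda\mapsto m_\lambda$ is clearly an injective group homomorphism $(\Z/p^{l_p}\Z)^*\hookrightarrow Z(\Aut(G_p))$.

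The main obstacle is surjectivity: showing that every $f\in Z(\Aut(G_p))$ has the form $m_\lambda$. I would argue via the Corredor--Gutierrez generating set from Theorem \ref{GenSet}. Conjugating $f$ by local automorphisms of the form $e_i\mapsto ue_i$ (with $u$ varying over units) forces the matrix of $f$ in the basis $e_1,\ldots,e_k$ to be diagonal; conjugating by labeled graph automorphisms that swap equal-order generators forces equal-order diagonal entries to coincide; and conjugating by the well-defined dominated transvections $\rho_{e_j\,e_i^{p^{b_i-b_j}}}$ (for $b_j\leq b_i$) forces all diagonal entries to be compatible reductions of a single unit $\lambda\in(\Z/p^{l_p}\Z)^*$. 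The delicate case is $p=2$, where $(\Z/2\Z)^*$ is trivial so the local-automorphism step yields no information on factors of order two; there one has to substitute more care, using combinations of dominated transvections and the graph symmetries to rule out off-diagonal entries.
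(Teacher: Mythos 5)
The paper does not actually prove this proposition: part (i) is declared well known and part (ii) is attributed to Han--Zhou \cite{HanZhou}, so your proposal is doing genuine work that the authors outsource to the literature. Your architecture is the standard one behind that reference and is sound: the primary components $G_p$ are characteristic, so iterating Lemma \ref{CharacteristicBothFactors} reduces to one prime; the power maps $m_\lambda$ are central because every endomorphism of an abelian group is $\Z$-linear; and $\lambda\mapsto m_\lambda$ is injective since some generator has order $p^{l_p}$. What remains is to show every central automorphism is such a power map, and there your write-up is a plan rather than a proof.

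Concretely, the local-automorphism conjugation gives $(u-1)a_{ij}=0$ for the off-diagonal entries $a_{ij}$ of $f(e_j)=\sum_i a_{ij}e_i$. For odd $p$ you may take $u=2$, so $u-1$ is a unit and $a_{ij}=0$; for $p=2$ every unit $u$ has $u-1$ even, so you only get $2a_{ij}=0$, leaving $a_{ij}\in\{0,2^{b_i-1}\}$ undetermined, and $(\Z/2\Z)^*$-factors give nothing at all. You flag this but do not carry it out. The fix is exactly the transvection bookkeeping you gesture at: in a clique the star condition is vacuous, so for every ordered pair $(i,j)$ the transvection $\rho_{e_ie_j}$ (when $b_j\leq b_i$) or $\rho_{e_ie_j^{2^{b_j-b_i}}}$ (when $b_j>b_i$) is well defined, and imposing that $f$ commutes with each of these both kills the residual off-diagonal entries and forces the diagonal entries to be the reductions of a single unit $\lambda\in(\Z/p^{l_p}\Z)^*$. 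This is a finite computation entirely parallel to the $p=2$ case analysis the authors do perform in Lemma \ref{rhoinj} and Proposition \ref{centerDelta}, so the gap is fillable; but as written the crucial $p=2$ step is asserted, not proved, and that step is precisely what the citation to \cite{HanZhou} is covering for the authors.
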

The result of Proposition \ref{centerFreeabelianAndFinite}(i) is well known and the result of Proposition \ref{centerFreeabelianAndFinite}(ii) was shown in \cite[Proposition 3.2]{HanZhou}.

Before we move on to the concrete calculation of the center of the automorphism group of a graph product of cyclic groups, we show that the center is always finite. 
\begin{lemma}
	\label{CenterFiniteDelta}
Let $G_\Gamma$ be a graph product of cyclic groups. The center of $\Aut(G_\Gamma)$ is finite.	
\end{lemma}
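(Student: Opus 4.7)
The plan is to sandwich $Z(\Aut(G_\Gamma))$ between two finite groups using the short exact sequence of Proposition \ref{graphicallyirreducible}:
$$\{1\}\to T_Z\to\Aut(G_\Gamma)\overset{\pi=(\pi_1,\pi_2)}{\longrightarrow}\Aut(G_\Delta)\times\Aut(G_{\Gamma-\Delta})\to\{1\},$$
which applies because cyclic vertex groups (infinite, or of prime power order as assumed at the start of this chapter) are graphically irreducible. The degenerate cases are easy: if $V(\Delta)=\emptyset$ then $G_\Gamma$ is centerless and Lemma \ref{center} gives $Z(\Aut(G_\Gamma))=\{1\}$; if $V(\Gamma)=V(\Delta)$ then $\Aut(G_\Gamma)=\Aut(\Z^n\times T)$ has finite center by Proposition \ref{centerFreeabelianAndFinite} together with the split sequence $\{1\}\to T^n\to\Aut(\Z^n\times T)\to\GL_n(\Z)\times\Aut(T)\to\{1\}$ recalled at the beginning of Chapter 6. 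So we may assume both $V(\Delta)$ and $V(\Gamma)-V(\Delta)$ are non-empty.

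Step one: $\pi(Z(\Aut(G_\Gamma)))$ is finite. Since $\pi$ is surjective, $\pi(Z(\Aut(G_\Gamma)))\subseteq Z(\Aut(G_\Delta))\times Z(\Aut(G_{\Gamma-\Delta}))$. Any $w\in V(\Gamma)-V(\Delta)$ admits some $u\in V(\Gamma)$ with $u\notin st(w)$; such $u$ cannot lie in $V(\Delta)$ (else $st(u)=V(\Gamma)$ would force $w\in st(u)$), hence $u\in V(\Gamma-\Delta)$, so $w$ is also non-central in $\Gamma-\Delta$. Consequently $Z(G_{\Gamma-\Delta})=\{1\}$ and Lemma \ref{center} yields $Z(\Aut(G_{\Gamma-\Delta}))=\{1\}$. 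The factor $Z(\Aut(G_\Delta))$ is finite by the argument used in the degenerate case.

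Step two: $Z(\Aut(G_\Gamma))\cap T_Z$ is finite. An element $f$ in this intersection is identity on $G_\Delta$ and satisfies $f(w)=wz_w$ with $z_w\in G_\Delta$ for each $w\in V(\Gamma)-V(\Delta)$. For every infinite cyclic vertex $v_i\in V(\Delta)$, the local inversion $\iota_{v_i}\in\Aut(G_\Gamma)$ sending $v_i\mapsto v_i^{-1}$ (identity on all other generators) must commute with $f$; evaluating $\iota_{v_i}\circ f=f\circ\iota_{v_i}$ on $w$ gives $\iota_{v_i}(z_w)=z_w$, forcing the $v_i$-exponent of $z_w$ to vanish. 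Ranging over all infinite cyclic $v_i\in V(\Delta)$ places every $z_w$ in the (finite) torsion subgroup of $G_\Delta$, so $f$ has order bounded by the exponent of that torsion subgroup. Thus $Z(\Aut(G_\Gamma))\cap T_Z$ has bounded exponent inside the finitely generated abelian group $T_Z$, hence is finite.

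Combining the two steps, $Z(\Aut(G_\Gamma))$ is an extension of the finite group $\pi(Z(\Aut(G_\Gamma)))$ by the finite group $Z(\Aut(G_\Gamma))\cap T_Z$, and is therefore finite. The crux is the local-inversion calculation in step two: that is what prevents the free $\Z$-directions of $T_Z$ from contributing to the center; everything else is bookkeeping with the exact sequence.
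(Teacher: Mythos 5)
Your proof is correct, and its skeleton (degenerate cases, the characteristic subgroup $G_\Delta$ with the exact sequence $\{1\}\to T_Z\to\Aut(G_\Gamma)\to\Aut(G_\Delta)\times\Aut(G_{\Gamma-\Delta})\to\{1\}$, and the local-inversion computation that kills the infinite-order part of each $z_w$) coincides with the paper's. The one genuine difference is the final step. The paper shows that every central automorphism has finite order and then invokes Corollary \ref{TorsionFinite} (torsion subgroups of $\Aut(G_\Gamma)$ are finite), which ultimately rests on the Sale--Susse Tits alternative for $\Out(G_\Gamma)$ and on \cite[Lemma 3.6]{KramerVarghese}. You instead sandwich $Z(\Aut(G_\Gamma))$ between $\pi(Z(\Aut(G_\Gamma)))$, which is finite because $Z(\Aut(G_{\Gamma-\Delta}))$ is trivial and $Z(\Aut(G_\Delta))$ is finite by the clique case, and $Z(\Aut(G_\Gamma))\cap T_Z$, which you show has bounded exponent inside the finitely generated abelian group $T_Z$ and is therefore finite by elementary structure theory. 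This makes your argument self-contained and independent of Corollary \ref{TorsionFinite}, at no cost in length; the paper's route has the mild advantage of reusing a result it needs anyway for the automatic-continuity application. Note also that the paper's treatment of a general central $f$ (not just $f\in T_Z$) needs the extra observation that $f_{|G_\Delta}$ already has finite order, which your sandwich sidesteps by handling the image and the kernel separately.
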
	
\begin{proof}
First we note that  if there is no vertex $v\in V(\Gamma)$ such that $st(v)=V(\Gamma)$,  then $G_\Gamma$ is centerless and therefore by Lemma \ref{center} the group $\Aut(G_\Gamma)$ is also centerless.

Now, let us assume that $\Gamma$ is a complete graph, then $G_\Gamma$ is a finitely generated abelian group and therefore there exists $n\in\mathbb{N}$ and a finite abelian group $T$ such that $G_\Gamma\cong\Z^n\times T$. We consider the exact sequence
$$\left\{1\right\}\to\ker(\rho)\to\Aut(\Z^n\times T)\to\GL_n(\Z)\times\Aut(T)\to \left\{1\right\}$$
where $\rho=(\rho_1\times \rho_2)$ and $\ker(\rho)\cong T^n$.

Consider the restriction of this exact sequence to $Z(\Aut(\Z^n\times T))$. Note, that $\rho_1(Z(\Aut(\Z^n\times T)))\subseteq Z(\GL_n(\Z))=\left\{I, -I\right\}$. 
We obtain
$$ \ker(\rho)\cap Z(\Aut(\Z^n\times T))\to Z(\Aut(\Z^n\times T))\to \rho_1(Z(\Aut(\Z^n\times T)))\times \rho_2(Z(\Aut(\Z^n\times T)))$$
The group $\ker(\rho)\cap Z(\Aut(\Z^n\times T))\subseteq T^n$ is finite and the group $\rho_1(Z(\Aut(\Z^n\times T)))\times \rho_2(Z(\Aut(\Z^n\times T)))\subseteq \left\{I, -I\right\}\times\Aut(T)$ is finite too, hence $Z(\Aut(\Z^n\times T))$ is finite. 

In the last case we assume that  $V(\Delta)=\left\{v_1,\ldots, v_n\right\}$ and $V(\Gamma)-V(\Delta)=\left\{w_1,\ldots, w_m\right\}$ for $n,m\geq 1$. Let $f\in Z(\Aut(G_\Gamma))$. Our goal is to show that $f$ has finite order.

Since $G_\Delta$ is characteristic, we get a group homomorphism $\pi_1\colon\Aut(G_\Gamma)\to\Aut(G_\Delta)$ that is surjective. Note, that $\pi_1(Z(\Aut(G_\Gamma)))\subseteq Z(\Aut(G_\Delta))$.

Hence we know that $f_{|G_\Delta}$ has finite order, since $f_{|G_\Delta}\in Z(\Aut(G_\Delta))$. By Lemma \ref{center} it follows that for $w_j$ there exists $z_j\in G_\Delta$ such that $f(w_j)=w_jz_j$. Thus $f$ has finite order if and only if all $z_j$ are of finite order. Assume for contradiction, that one of the $z_i$'s has infinite order. Then we decompose $z_i=x_1x_2\ldots x_ky_1\ldots y_l$ where $x_o,y_j\in V(\Delta)$ and $ord(x_o)=\infty$ for all $o\in\{1,...,k\}$ and $ord(y_j)<\infty$ for all $j\in \{1,...,l\}$. Let $h\in \Aut(G_\Gamma)$ be the automorphism defined as $h(x_1)=x_1^{-1}$ and identity elsewhere.

Then $f$ does not commute with $h$. More precisely, we have
$$f\circ h(w_i)=f(w_i)=w_ix_1x_2\ldots x_ky_1\ldots y_l$$
and
$$h\circ f(w_i)=h(w_ix_1x_2\ldots x_ky_1\ldots y_l)=w_ix^{-1}_1x_2\ldots x_ky_1\ldots y_l$$

Hence $f$ has finite order and therefore $Z(\Aut(G_\Gamma))$ is an abelian torsion group. By Lemma \ref{TorsionFinite} we know that any torsion subgroup in $\Aut(G_\Gamma)$ is finite, thus $Z(\Aut(G_\Gamma))$ is finite.
\end{proof}

The above lemma together with Theorem G implies 
\begin{corollary}
Let $G_\Gamma$ be a graph product of cyclic groups. If 
\begin{enumerate}
    \item $V(\Delta)$ is empty or
    \item $V(\Delta)=\left\{v_1\right\}$, $ord(v_1)=2$ and $V(\Gamma)-V(\Delta)$ is non-empty and all vertex groups in this set are finite and $2\nmid ord(w_j)$ for $j=1,\ldots, m$ or
    \item $V(\Delta)$ is non-empty and $V(\Gamma)-V(\Delta)$ is non-empty and all vertex groups in $\Delta$ are infinite cyclic and there exists an infinite vertex group in $V(\Gamma)-V(\Delta)$,
\end{enumerate}
then $\Aut(G_\Gamma)$ is centerless.
\end{corollary}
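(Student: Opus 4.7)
The plan is to combine Lemma \ref{CenterFiniteDelta} with Theorem G. Recall that $Z(\Aut(G_\Gamma))$ is always a normal subgroup of $\Aut(G_\Gamma)$. Lemma \ref{CenterFiniteDelta} tells us this center is finite. So if we can argue that $\Aut(G_\Gamma)$ has no non-trivial \emph{finite} normal subgroups under each of the hypotheses (i), (ii), (iii), the center must be trivial.

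The verification amounts to matching the three conditions of the corollary against the four conditions listed in Theorem G. Case (i) of the corollary is literally case (1) of Theorem G. Case (ii) matches case (3) of Theorem G, since we are requiring $V(\Delta)=\{v_1\}$ with $G_{v_1}\cong\Z/2\Z$, $V(\Gamma)-V(\Delta)=\{w_1,\dots,w_m\}$ with $m\geq 1$, every $G_{w_j}$ finite, and $2\nmid\mathrm{ord}(G_{w_j})$ for all $j$. Case (iii) matches case (4) of Theorem G: since the vertex groups are cyclic, an infinite vertex group is automatically $\Z$, and the requirement that every vertex group in $V(\Delta)$ is infinite cyclic translates to $G_v\cong\Z$ for all $v\in V(\Delta)$, which together with the existence of a $w\in V(\Gamma)-V(\Delta)$ with $G_w\cong\Z$ is precisely the content of (4).

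Putting this together, under any of (i), (ii), (iii) Theorem G guarantees that $\Aut(G_\Gamma)$ has no non-trivial finite normal subgroups. Since $Z(\Aut(G_\Gamma))\trianglelefteq\Aut(G_\Gamma)$ and is finite by Lemma \ref{CenTerFiniteDelta}, we conclude $Z(\Aut(G_\Gamma))=\{id\}$, i.e.\ $\Aut(G_\Gamma)$ is centerless.

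There is no genuine obstacle: the corollary is a short bookkeeping consequence of the two main results already established. The only minor point worth flagging is the distinction between Theorem G's condition (2) ($G_\Gamma\cong\Z/2\Z$) and the present corollary, which implicitly excludes that degenerate case because (ii) and (iii) require $V(\Gamma)-V(\Delta)\neq\emptyset$ and (i) requires $V(\Delta)=\emptyset$; but this case is trivially centerless anyway since $\Aut(\Z/2\Z)=\{id\}$.
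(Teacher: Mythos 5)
Your proof is correct and is exactly the paper's argument: the paper deduces this corollary by combining Lemma \ref{CenterFiniteDelta} (the center of $\Aut(G_\Gamma)$ is a finite normal subgroup) with Theorem G (conditions (1), (3), (4) rule out non-trivial finite normal subgroups), matching the three hypotheses just as you do. The only cosmetic issue is that your second citation of the lemma miscapitalizes the label, which would break the cross-reference.
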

In particular, we have proven Corollary D from the introduction.

\subsection{The center of the automorphism group of an infinite finitely generated abelian group with torsion}
Let us now focus on the calculation of the center of the automorphism group of an infinite finitely generated abelian group with torsion elements $G\cong\Z^n\times T$ where $n\geq 1$ and $T$ is a non-trivial finite abelian group. We denote by $\left\{x_1,\ldots, x_n\right\}$ a generating set of the free abelian part of $G$ and by $\left\{y_1,\ldots, y_m\right\}$ a generating set of the torsion part of $G$ where each $y_j$ has a prime power order. Note that we can write $G$ as a graph product $G_\Delta$ where $V(\Delta)=\left\{x_1,\ldots, x_n, y_1,\ldots, y_m\right\}$.

\begin{lemma}\label{iotainj}
Let $G=\Z^n\times T$ be a finitely generated infinite abelian group where $T$ is a finite non-trivial abelian group. Let $\iota\in\Aut(G)$ be the automorphism induced by $x_i\mapsto x_i^{-1}$ and $y_j\mapsto y_j$ for all $i=1,\ldots, n$, $j=1,\ldots, m$.
We have $\iota\in Z(\Aut(G))$ if and only if $T$ is a product of $\Z/2\Z$'s.
\end{lemma}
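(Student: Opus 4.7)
The plan is to prove both directions of the equivalence by testing the commutation relation $\iota\circ f = f\circ \iota$ against a carefully chosen family of automorphisms.

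For the forward implication, I would assume $\iota\in Z(\Aut(G))$ and show that each torsion generator $y_j$ has order at most two. The key test automorphism is the dominated transvection $\rho_{x_1 y_j}$, which sends $x_1 \mapsto x_1 y_j$ and fixes every other generator; this is well defined because $y_j$ lies in the center of $G$ and $x_1$ has infinite order. Evaluating both compositions on $x_1$ gives
\[
\iota\circ \rho_{x_1 y_j}(x_1) = x_1^{-1} y_j \quad\text{and}\quad \rho_{x_1 y_j}\circ \iota(x_1) = x_1^{-1} y_j^{-1},
\]
so centrality of $\iota$ forces $y_j^2 = 1$. Running this argument for every $j$ shows that $T$ has exponent two, hence $T\cong (\Z/2\Z)^m$.

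For the converse, suppose $T$ is an elementary abelian $2$-group. Every $f\in\Aut(G)$ preserves the torsion subgroup $T$ (as the torsion part is characteristic in $G$), so $f(y_j)\in T$ for each $j$, and on the free generators $f(x_i) = x_1^{a_{i1}}\cdots x_n^{a_{in}}\, t_i$ for some $t_i\in T$. Since $\iota$ acts as the identity on $T$ and as inversion on each $x_i$, a short calculation gives
\[
\iota\circ f(x_i) = x_1^{-a_{i1}}\cdots x_n^{-a_{in}} t_i, \qquad f\circ \iota(x_i) = x_1^{-a_{i1}}\cdots x_n^{-a_{in}} t_i^{-1},
\]
and these coincide because $t_i\in T$ satisfies $t_i^2 = 1$. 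The corresponding check on $y_j$ is immediate: both compositions equal $f(y_j)\in T$. Hence $\iota$ commutes with every automorphism.

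I do not expect any major obstacle here; the statement reduces to a direct computation on generators once one notes that $T$ is characteristic in $G$ and that the transvection $\rho_{x_1 y_j}$ is a valid automorphism. The only subtlety is justifying that arbitrary $f\in\Aut(G)$ send $y_j$ into $T$, which follows from $T$ being the torsion subgroup.
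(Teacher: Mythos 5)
Your proposal is correct. The forward direction is exactly the paper's argument: test $\iota$ against the dominated transvection $\rho_{x_1y_j}$ and read off $y_j^2=1$ from the mismatch $y_j$ versus $y_j^{-1}$. The converse, however, takes a genuinely different and arguably cleaner route. The paper verifies commutation of $\iota$ against the Corredor--Gutierrez generating set of $\Aut(G_\Delta)$ (dominated transvections, local automorphisms, labeled graph automorphisms), leaving part of the check as ``an easy exercise.'' You instead take an arbitrary $f\in\Aut(G)$, use that the torsion subgroup $T$ is characteristic to write $f(x_i)=x_1^{a_{i1}}\cdots x_n^{a_{in}}t_i$ with $t_i\in T$ and $f(y_j)\in T$, and observe that $\iota\circ f$ and $f\circ\iota$ differ on $x_i$ exactly by $t_i$ versus $t_i^{-1}$, which vanishes when $T$ has exponent $2$. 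This buys independence from Theorem \ref{GenSet} and makes the converse a self-contained computation; the paper's approach has the advantage of fitting into the generating-set framework it uses throughout Section 7, where the same transvection calculations recur. Both arguments are complete and correct.
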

\begin{proof}
We write $G$ as a graph product of a complete graph $\Delta$, where $x_1,\ldots,x_n$ are the infinite order vertices and $y_1,\ldots,y_m$ the finite order ones, hence $G\cong G_\Delta$. For the only if part we assume there exists a vertex $y_j\in V(\Delta)$ with $ord(y_j)\neq 2$. Then the dominated transvection $\rho:=\rho_{x_1y_j}\in \Aut(G_\Delta)$ does not commute with $\iota$, since:
$$\iota\circ \rho(x_1)=x_1^{-1}y_j \quad \text{ and } \quad \rho\circ \iota (x_1)=x_1^{-1}y_j^{-1}$$
and the equality $y_j=y_j^{-1}$ holds if and only if $ord(y_j)=2$.

For the if part, the above calculation shows that $\iota$ commutes with all dominated transvections of the form $\rho_{x_iy_l}$ for $i=1,\ldots,n$ and $l=1,\ldots,m$. It is an easy exercise to show that $\iota$ commutes with local automorphisms and graph automorphisms as well.
\end{proof}

Given a finitely generated abelian group $G\cong\Z^n\times T$, we always have a surjective group homomorphism  $\rho:=(\rho_1\times\rho_2)\colon\Aut(G)\to \Aut(\Z^n)\times\Aut(T)$ induced by the fact that $T$ is a characteristic subgroup. The next lemma describes when the restriction of $\rho$ to the center of $\Aut(G)$ is injective. 

\begin{lemma}\label{rhoinj}
Let $G\cong\Z^n\times T$ be a finitely generated infinite abelian group where $T$ is a finite group. Let $\rho\colon\Aut(G)\to\Aut(\Z^n)\times\Aut(T)$ be as above.

The restriction $\rho_{\mid Z(\Aut(G))}$ is injective if and only if there exist no non-trivial central transvections, which is equivalent to the following condition: Write $T=\langle y_1\rangle \times ... \times \langle y_m\rangle$ and consider the maximum order of $y_i$, that is equal to a power of $2$. Call this order $k$. Then there exist non-trivial central transvections if and only if $n=1$ and $|\left\{y_i\mid ord(y_i)=k\right\}|=1$. 

If the kernel of the restriction is non-trivial, then it is isomorphic to $\Z/2\Z$.
\end{lemma}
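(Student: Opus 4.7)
The plan is to parameterise the kernel concretely, extract necessary conditions for centrality from the Corredor-Gutierrez generating set, and then verify sufficiency for the unique surviving candidate. By the preceding description, $\ker(\rho)\cong T^n$ and its elements have the form $f(x_i) = x_iz_i$, $f(y_j) = y_j$, with $(z_1,\ldots, z_n)\in T^n$. Since $G$ is a graph product on a complete graph, Theorem \ref{GenSet} reduces checking $f\in Z(\Aut(G))$ to commutation with labeled graph automorphisms, local automorphisms, and dominated transvections.

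The first step is to dispose of $n\geq 2$. Evaluating the commutator of $f$ with the dominated transvection $\rho_{x_ix_j}$ (with $i\neq j$) on the generator $x_i$ gives $x_ix_jz_i = x_ix_jz_iz_j$, forcing $z_j = 1$. Letting $(i,j)$ vary kills every $z_j$, so $\ker(\rho)\cap Z(\Aut(G)) = \{id\}$ and no non-trivial central transvections exist.

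Assume now $n = 1$ and write $z := z_1 = \prod_l y_l^{b_l}$. Commutation with the inversion $x_1\mapsto x_1^{-1}$ yields $z = z^{-1}$, so $z$ is $2$-torsion; commutation with each local automorphism $\alpha$ of $\langle y_l\rangle$ yields $\alpha(z) = z$, which, using that the fixed subgroup of $\Aut(\Z/p^s\Z)$ acting on $\Z/p^s\Z$ is exactly the $2$-torsion, is already implied by the previous condition. The decisive constraints come from the dominated transvections between the finite-order vertices: for distinct $2$-power-order vertices $y_l, y_k$ with $ord(y_k)\mid ord(y_l)$, commuting with $\rho_{y_ly_k}$ forces $ord(y_k)\mid b_l$; the shifted variant $\rho_{y_l,y_k^{2^{j-i}}}$ in the other direction (with $ord(y_l) = 2^i < 2^j = ord(y_k)$) forces $2^i\mid b_l$, hence $b_l = 0$. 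Combining these, $b_l\neq 0$ is possible only for a $y_l$ whose $2$-power order is strictly maximal, and two $y_l$'s tied for the maximum mutually annihilate each other's coordinate.

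Hence a non-trivial element of $\ker(\rho)\cap Z(\Aut(G))$ can exist only when $n = 1$ and there is a unique $y_{l_0}$ of maximal $2$-power order $k$, in which case the only candidate is the involution $\sigma\colon x_1\mapsto x_1y_{l_0}^{k/2}$, $y_j\mapsto y_j$. Verifying $\sigma\in Z(\Aut(G))$ reduces to a direct check against each remaining generator: labeled graph automorphisms fix $y_{l_0}$ by uniqueness of its order, local automorphisms of $\langle y_{l_0}\rangle$ fix $y_{l_0}^{k/2}$ as the unique non-trivial $2$-torsion element, and the transvections either avoid $y_{l_0}$ or shift by elements of order dividing $k/2$. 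Since $\sigma^2 = id$, the kernel is cyclic of order two. The main obstacle is the combinatorial bookkeeping for the transvections among the finite-order vertices: one must combine the ordered transvection $\rho_{y_ly_k}$ with its shifted partner $\rho_{y_l,y_k^{2^{j-i}}}$ across all comparable pairs to extract exactly the uniqueness-of-maximum condition, while the rest of the argument is routine.
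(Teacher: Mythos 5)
Your proof is correct and follows essentially the same route as the paper's: parametrize $\ker(\rho)\cong T^n$, test centrality against the Corredor--Gutierrez generators (the inversion $\iota$ forces $2$-torsion, the transvections $\rho_{y_ly_k}$ together with their shifted variants $\rho_{y_ly_k^{2^{j-i}}}$ kill every coordinate except one attached to a unique vertex of maximal $2$-power order, and $\rho_{x_ix_j}$ disposes of $n\geq 2$), then verify that the surviving involution $x_1\mapsto x_1y_{l_0}^{k/2}$ is central. The only difference is organizational: you eliminate the case $n\geq 2$ at the outset via $\rho_{x_ix_j}$, whereas the paper treats it last after first constraining the shape of the candidate.
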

\begin{proof}
The kernel of $\rho$ is isomorphic to $T^n$ as discussed in the proof of Lemma \ref{CenterFiniteDelta}. 

The fact that $\rho_{\mid Z(\Aut(G))}$ is injective if and only if there exist no non-trivial central transvections follows from the short exact sequence used in the proof of Lemma \ref{CenterFiniteDelta}.

For the ``only if'' part first assume that in the torsion part, no vertex order is a power of $2$. Then no elements of order $2$ exist and hence by the calculation in the previous lemma, no dominated transvection $\rho_{x_iy_j}$ for $i=1,\ldots,n$ and $j=1,\ldots,m$ commutes with $\iota$. Now suppose there are multiple vertices with order $k$. Without loss of generality we can assume that $ord(y_1)=k=ord(y_2)$. Towards a contradiction assume there exists a non-trivial element $f$ in the kernel of $\rho_{|Z(\Aut(G))}$. We know that $f(x_i)=x_iz_{i}$ and that the order of $z_i$ is $1$ or $2$ for every $i$. Without loss of generality $z_1\neq 1$. Write $z_1=v_1^{l_1}...v_o^{l_o}$, where the order of all $v_j$ is a power of $2$ and $l_1,...,l_o \in \Z-\{0\}$. 

We first show that $ord(v_i)\neq k$. Assume for the contrary that $ord(v_1)=k$. Then all vertices with order $k$ need to appear as factors in $z_1$, since they can be permuted by a labeled graph automorphism and $f$ needs to commute with these. So we can assume that $ord(v_2)=k$ as well. Hence the dominated transvection $\rho_{v_1v_2}$ is well defined. But $f$ does not commute with $\rho_{v_1v_2}$ since $\rho_{v_1v_2}\circ f (x_1)=x_1v_1^{l_1}v_3^{l_{3}}v_4^{l_4}...v_o^{l_o}$ and $f\circ \rho_{v_1v_2}(x_1)=x_1v_1^{l_{1}}v_2^{l_2}...v_o^{l_o}$. 

Now we show that the other orders cannot appear either. Suppose that the order of $v_1$ is $2^a$ and that $ord(y_1)=k=2^b$ with $b>a$. Then the dominated transvection $\rho_{v_1y_1^{2^{b-a}}}$ is well-defined and does not commute with $f$ since
$$f\circ \rho_{v_1y_1^{2^{b-a}}}(x_1)=x_1z_1\quad\text{and}\quad \rho_{v_1y_1^{2^{b-a}}}\circ f (x_1)=x_1z_1y_1^{2^{b-1}}$$
and $y_1$ has order $2^b$, so $y_1^{2^{b-1}}$ is non-trivial. Hence $z_1=1$, a contradiction.

The final case is that there is exactly one vertex of order $k$ but $n>1$. Then by the previous case, the only possible central element would be of the form $f(x_i)=x_iy^{k/2}$ where $y$ is the unique vertex of order $k$. Now there exists the dominated transvection $\rho_{x_1x_2}$ and this does not commute with f since $f\circ \rho_{x_1x_2}(x_1)=x_1x_2$ but $\rho_{x_1x_2}\circ f (x_1)=x_1x_2y^{k/2}$ finishing the proof of the ``only if'' direction.

For the if part assume that the order of $y_1$ is $k$. Then the dominated transvection $\rho_{x_1y_1^{k/2}}$ is central and hence in the kernel of $\rho_{\mid Z(\Aut(G))}$. This can be seen by doing essentially the same calculations as in the previous lemma. The calculations of the other direction show that this dominated transvection is the only possible central element of the kernel.
\end{proof}

Before calculating the center of the automorphism group of a graph product $G_\Delta$ in full generality let us discuss the definition of one very important automorphism. 

If there exists at least one vertex whose order is a power of $2$, then  let $k$ denote the maximum order of the $y_j$ such that $ord(y_j)$ is a power of $2$. Write $\{z_1,...,z_l\}:=\{y_j\mid ord(y_j)=k\}$. Then define $\alpha\in\Aut(G_\Delta)$ by $\alpha(x_i)=x_iz_1...z_l$ and $\alpha(y_j)=y_j$.

First, we investigate the center of $\Aut(G_\Delta)$ where $ord(y_i)=2$ for $i=1,\ldots, m$.
\begin{corollary}
\label{ZZ2}
   Let $G_\Delta\cong \Z^n\times(\Z/2\Z)^m$ where $n,m \geq 1$. The center of $\Aut(G_\Delta)$ is isomorphic to $\langle \iota, \alpha\rangle\cong\Z/2\Z\times\Z/2\Z$ if and only if $n=1$ and $m=1$. In all other cases the center of $\Aut(G_\Delta)$ is isomorphic to $\left\{1,\iota\right\}$. 
\end{corollary}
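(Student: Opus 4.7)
The plan is to exploit the short exact sequence
$$\{1\}\to \ker(\rho)\to \Aut(G_\Delta)\xrightarrow{\rho=(\rho_1,\rho_2)}\GL_n(\Z)\times\Aut((\Z/2\Z)^m)\to\{1\}$$
induced by the fact that the torsion part $T=(\Z/2\Z)^m$ is characteristic, together with the two preceding lemmas specialised to the situation where every finite vertex group has order $2$ (so $k=2$).

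First I would record that $\iota$ always lies in $Z(\Aut(G_\Delta))$: this is immediate from Lemma \ref{iotainj} since $T$ is a product of $\Z/2\Z$'s. To obtain an \emph{upper} bound on the center in the generic case, I would apply the standard fact that a surjection $\phi\colon G\twoheadrightarrow H$ sends $Z(G)$ into $Z(H)$, so $\rho(Z(\Aut(G_\Delta)))\subseteq Z(\GL_n(\Z))\times Z(\Aut((\Z/2\Z)^m))$. Here $Z(\GL_n(\Z))=\{\pm I\}$ in all cases, and $Z(\Aut((\Z/2\Z)^m))=Z(\GL_m(\F_2))$ is trivial because the only scalar in $\F_2^*$ is $1$ (and $\Aut(\Z/2\Z)=\{id\}$ for $m=1$). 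Assuming $(n,m)\neq(1,1)$, Lemma \ref{rhoinj} tells us that $\rho_{|Z(\Aut(G_\Delta))}$ is injective, so the center embeds into $\{\pm I\}\times\{id\}$, has order at most $2$, and therefore coincides with $\{id,\iota\}$.

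In the remaining case $n=m=1$, the automorphism $\alpha$ is the transvection $\rho_{x_1 y_1}$, which by Lemma \ref{rhoinj} generates a non-trivial kernel $\ker(\rho_{|Z(\Aut(G_\Delta))})\cong\Z/2\Z$. Combined with the injectivity of $\rho$ on any complementary direction and the previous bound on the image, the sequence
$$\{1\}\to \ker(\rho)\cap Z(\Aut(G_\Delta))\to Z(\Aut(G_\Delta))\to \rho(Z(\Aut(G_\Delta)))\to\{1\}$$
yields $|Z(\Aut(G_\Delta))|\leq 4$. A direct check shows that $id,\iota,\alpha,\iota\alpha$ are four distinct involutions (they disagree on $x_1$), so $Z(\Aut(G_\Delta))=\langle\iota,\alpha\rangle\cong(\Z/2\Z)^2$. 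The only real obstacle is verifying that $\iota\alpha$ and $\alpha$ do not collapse with $\iota$ or the identity, but this is an elementary evaluation on the two generators $x_1,y_1$.
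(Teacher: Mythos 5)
Your proposal is correct and follows essentially the same route as the paper: for $(n,m)\neq(1,1)$ both arguments use Lemma \ref{rhoinj} to embed $Z(\Aut(G_\Delta))$ into $Z(\GL_n(\Z))\times Z(\Aut((\Z/2\Z)^m))=\left\{\pm I\right\}\times\left\{1\right\}$ and Lemma \ref{iotainj} to see that $\iota$ realizes the resulting bound. The only (immaterial) difference is in the case $n=m=1$, where the paper simply observes via the generating set of Theorem \ref{GenSet} that $\Aut(\Z\times\Z/2\Z)=\langle\iota,\alpha\rangle$ is itself abelian of order four, whereas you bound $|Z(\Aut(G_\Delta))|\leq 4$ through the exact sequence and then exhibit the four central elements $id,\iota,\alpha,\iota\alpha$.
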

\begin{proof}
    For the ``if'' statement, simply note that $\Aut(G_\Delta)\cong\langle\iota,\alpha\rangle\cong \Z/2\Z\times \Z/2\Z$ by Theorem \ref{GenSet}, since the only generators are the local automorphism $\iota$ and one dominated transvection mapping the generator of $\Z$ to the product of itself with the generator of $\Z/2\Z$.
    
    In the other cases we know by Lemma \ref{rhoinj}, that $\rho_{\mid Z(\Aut(G_\Delta))}$ is injective. Thus we have
    $$\rho_{|\Aut(G_\Delta)}\colon Z(\Aut(G_\Delta))\hookrightarrow Z(\Aut(\Z^n))\times Z(\Aut(\Z/2\Z)^m)=\left\{1,\iota\right\}\times\left\{1\right\}.$$
    Hence the center of $\Aut(G_\Delta)$ has at most $2$ elements. Moreover it is easy to check that $\iota$ is a central element using the generating set from Theorem \ref{GenSet}. 
\end{proof}

Now we move on to the general case.
\begin{proposition}
\label{centerDelta}
	Let $G_\Delta$ be a graph product of cyclic groups where $\Delta$ is a clique. We write $V(\Delta)=\left\{x_1,\ldots, x_n,y_1,\ldots, y_m\right\}$ where the order of $x_i$ for $i=1,\ldots, n$ is infinite and each order of $y_j$ for $j=1,\ldots, m$ is a prime power. 
	If $n, m\geq 1$, then the center of $\Aut(G_\Gamma)$ is a subgroup of $\langle \iota, \alpha\rangle\cong \Z/2\Z \times \Z/2\Z $.

      Furthermore let $k$ denote the maximal order of a vertex group which is also a power of $2$. Then the center is
		\begin{enumerate}
		    \item isomorphic to $\{1,\alpha\}$ if there exists exactly one vertex of infinite order and precisely one vertex of order $k$ and $k\neq 2$ or precisely one vertex of order $2$, $k=2$ and at least one vertex of finite order not equal to $2$.
		
		    \item isomorphic to $\langle\iota, \alpha\rangle$ if there is precisely one vertex order of finite order and this order is $2$ and precisely one vertex of infinite order.
		     \item isomorphic to $\{1,\iota\}$ if all finite vertex groups have order $2$ and there are more than $2$ vertices.
        \item trivial else.
		\end{enumerate}
\end{proposition}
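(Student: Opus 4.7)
The plan is to leverage the characteristic short exact sequence $1\to T^n\to\Aut(\Z^n\times T)\xrightarrow{\rho=(\rho_1,\rho_2)}\GL_n(\Z)\times\Aut(T)\to 1$ recalled at the start of this chapter (where $T$ is the torsion part of $G_\Delta\cong \Z^n\times T$), restrict it to $Z(\Aut(G_\Delta))$, and analyse the kernel and image pieces separately. The kernel piece $\ker(\rho)\cap Z(\Aut(G_\Delta))$ is delivered directly by Lemma \ref{rhoinj}: it is either trivial, or cyclic of order two generated by $\alpha$, with the non-trivial alternative occurring precisely when $n=1$ and the maximal $2$-power order $k$ among the finite vertex groups is attained by a unique vertex.

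For the image, the key observation is that $\rho(Z(\Aut(G_\Delta)))\subseteq Z(\GL_n(\Z))\times Z(\Aut(T))=\{I,-I\}\times Z(\Aut(T))$. To cut this down further I would use the dominated transvections $\tau_{ij}:=\rho_{x_iy_j}$, which are well defined for every pair $(i,j)$ because $x_i$ has infinite order. Writing any central $f$ as $f(x_i)=x_iz_{x_i}$ and $f(y_j)=y_jz_{y_j}$ via Lemma \ref{center}, the identity $f\circ\tau_{ij}=\tau_{ij}\circ f$ evaluated on $x_i$ completely determines $z_{y_j}$ in terms of the $x_i$-exponent of the $\Z^n$-part of $z_{x_i}$. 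Since $\rho_1(f)\in\{I,-I\}$ has only two possible values, this leaves only two candidates for $\rho_2(f)$; combining with Lemma \ref{iotainj} identifies the non-trivial candidate with $\iota$ and forces $T=(\Z/2\Z)^m$ for it to actually lie in the image.

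Assembling the two halves yields $Z(\Aut(G_\Delta))\subseteq\langle\iota,\alpha\rangle\cong\Z/2\Z\times\Z/2\Z$. The four enumerated cases then follow by matching the two independent criteria ``$\iota$ is central'' (i.e.\ $T=(\Z/2\Z)^m$, by Lemma \ref{iotainj}) and ``$\alpha$ is central'' (i.e.\ $n=1$ with a unique vertex of maximal $2$-power order $k$, by Lemma \ref{rhoinj}): case (ii) is where both criteria hold, forcing $n=m=1$ and $\mathrm{ord}(y_1)=2$; case (iii) is where only $\iota$ is central, forcing all finite vertices to have order $2$ and $n+m\geq 3$; case (i) is where only $\alpha$ is central, which naturally splits into $k>2$ and $k=2$-with-an-additional-odd-prime-power-vertex depending on whether $T\neq (\Z/2\Z)^m$ is witnessed at the maximal $2$-power level or elsewhere; and case (iv) collects the remaining configurations.

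The main obstacle I anticipate is the image computation: one must ensure that commutation with the transvections $\tau_{ij}$ alone suffices to pin $\rho_2(f)$ down, rather than having to also invoke commutation with local automorphisms and with graph automorphisms permuting vertices of equal order; showing that no stray central element of $\Aut(T)$ can sneak into $\rho(Z(\Aut(G_\Delta)))$ is the delicate step. Once this bookkeeping is clean, the translation between the two central conditions and the stated case distinctions is combinatorial and straightforward.
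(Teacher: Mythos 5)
Your strategy is the same as the paper's: restrict $\rho=(\rho_1,\rho_2)$ to the center, treat the kernel via Lemma \ref{rhoinj}, pin down the image via commutation with the dominated transvections $\rho_{x_iy_j}$, and then sort the four cases using Lemma \ref{iotainj}. The kernel half is fine. The gap sits exactly at the step you yourself flagged as delicate. Write $f(x_i)=x_i^{\epsilon}t_i$ with $\epsilon=\pm1$ (since $\rho_1(f)\in\{I,-I\}$) and $t_i\in T$. Evaluating $f\circ\rho_{x_iy_j}=\rho_{x_iy_j}\circ f$ on $x_i$ gives $x_i^{\epsilon}t_i f(y_j)=x_i^{\epsilon}y_j^{\epsilon}t_i$, i.e.\ $f(y_j)=y_j^{\epsilon}$ --- not $f(y_j)=y_j$. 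So the non-trivial candidate for $\rho_2(f)$ is \emph{inversion} on $T$, whereas Lemma \ref{iotainj} is a statement about $\iota$, which acts as the \emph{identity} on $T$; that lemma simply does not apply to this candidate, and your claim that it ``forces $T=(\Z/2\Z)^m$'' is unjustified.

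Worse, the candidate cannot be excluded by any further commutation check: the global inversion $\nu\colon g\mapsto g^{-1}$ of the abelian group $G_\Delta$ satisfies $h(\nu(g))=h(g)^{-1}=\nu(h(g))$ for every $h\in\Aut(G_\Delta)$, so $\nu$ is always a non-trivial central element, and $\nu\notin\langle\iota,\alpha\rangle$ unless $T$ has exponent $2$ (both $\iota$ and $\alpha$ fix $T$ pointwise). Concretely, $Z(\Aut(\Z\times\Z/3\Z))\supseteq\{id,\nu\}$ and a direct computation gives $Z(\Aut(\Z\times\Z/4\Z))\cong(\Z/2\Z)^2$, contradicting cases (iv) and (i) respectively. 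So the containment $Z(\Aut(G_\Delta))\subseteq\langle\iota,\alpha\rangle$ you are aiming for is itself false, and the gap is not patchable. For what it is worth, the paper's own proof fails at the identical point: it asserts that $f(y)=y^{k_y}$ with $k_y\neq1$ cannot commute with $\rho_{xy}$, but that computation tacitly assumes $f(x)=xt$; in the case $f(x)=x^{-1}t$ commutation forces $k_y=-1$ and is then satisfied. A correct statement would have to incorporate $\nu$ (equivalently, allow the central element to invert the torsion part whenever it inverts the free part).
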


\begin{proof}
We start by invoking Lemma \ref{rhoinj} which tells us that $\rho_{\mid Z(\Aut(G_\Delta))}$ is injective in cases (iii) and (iv) and else not injective (this follows by carefully checking all possibilities).
In the cases where $\rho_{\mid Z(\Aut(G_\Delta))}$ is injective we know that the center is a subgroup of $\{1,\iota\}\times Z(\Aut(\langle y_1,\ldots, y_m\rangle)$. However no automorphism of $Z(\Aut(\langle y_1,\ldots, y_m\rangle))$ can appear in $Z(\Aut(G_\Delta))$ since these appear as local automorphisms by Proposition \ref{centerFreeabelianAndFinite}(ii) and $\langle y_1,\ldots, y_m\rangle$ is characteristic. These do not commute with the product of the dominated transvections $\rho_{x_1y_1}\rho_{x_1y_2}...\rho_{x_1y_m}$. More precisely let $\rho_1\colon \Aut(G_\Delta)\to \Aut(\Z^n)$ and $\rho_2\colon \Aut(G_\Delta)\to \Aut(\langle y_1,\ldots, y_m\rangle)$ denote the respective projections. Since the center of $\Aut(G_\Delta)$ is mapped to the center of the direct product, we know a central element $f\in \Aut(G_\Delta)$ has to map an infinite order vertex $x$ to $xt$ or $x^{-1}t$ for some torsion element $t\in \langle y_1,\ldots, y_m\rangle$ and $f(y)=y^{k_y}$ for every vertex in $\left\{y_1,\ldots, y_m\right\}$. If the $k_y$ is not equal to $1$, then this $f$ does not commute with the dominated transvection $\rho_{xy}$. Therefore we conclude that a central element $f$ acts as the identity on $\langle y_1,\ldots, y_m\rangle$. Hence in cases (iii) and (iv) the only possible non-trivial central element is $\iota$, so we use Lemma \ref{iotainj} to finish these two cases.

Corollary \ref{ZZ2} implies the case (ii).

The remaining case is (i). It is clear by Lemma \ref{rhoinj}, that we have the following inclusion $Z(\Aut(G_\Delta))\subseteq \langle \alpha, \iota , \Aut(\langle y_1,\ldots, y_m\rangle)\rangle$. By the above argument, we can see that a central element once again acts trivially on $\langle y_1,\ldots, y_m\rangle$, hence $Z(\Aut(G_\Delta))\subseteq \langle\iota,\alpha\rangle $ and since $\iota$ is not central by Lemma \ref{iotainj} and $\alpha$ is by the same argument as in the proof of Lemma \ref{rhoinj}, we obtain the desired statement.
\end{proof}

Let us discuss some examples. We denote the graphs in Figure 6 from left to the right by $\Delta_1, \Delta_2, \Delta_3$ and $\Delta_4$. Proposition \ref{centerDelta}(iii) implies that the center of $\Aut(G_{\Delta_1})$ is generated by $\iota$, thus $Z(\Aut(G_{\Delta_1}))\cong\Z/2\Z$. Further, by Proposition \ref{centerDelta}(iv) we know that $Z(\Aut(G_{\Delta_2}))$ is trivial. The center of $\Aut(G_{\Delta_3})$ is equal to $\left\{id, \alpha\right\}$  by Proposition \ref{centerDelta}(i). We have $Z(\Aut(G_{\Delta_4}))=\left\{id\right\}$ by Proposition \ref{centerDelta}(iv).
 
\begin{figure}[h]
	\begin{center}
		\begin{tikzpicture}
			\draw[fill=black]  (0,0) circle (3pt);
			\node at (0,-0.4) {$\Z$};
			\draw[fill=black]  (2,0) circle (3pt);
			\node at (2,-0.4) {$\Z$};	
			\draw[fill=black]  (2,2) circle (3pt);
			\node at (2,2.4) {$\Z$};
			\draw[fill=black]  (0,2) circle (3pt);
			\node at (0,2.4) {$\Z$};	
			\draw (0,0)--(2,0);	
			\draw (2,0)--(2,2);	
			\draw (2,2)--(0,2);
			\draw (0,0)--(0,2);
			\draw (0,0)--(2,2);	
			\draw (0,2)--(2,0);	
			
			\draw[fill=black]  (4,0) circle (3pt);
			\node at (4,-0.4) {$\Z$};
			\draw[fill=black]  (6,0) circle (3pt);
			\node at (6,-0.4) {$\Z$};	
			\draw[fill=black]  (6,2) circle (3pt);
			\node at (6,2.4) {$\Z/3\Z$};
			\draw[fill=black]  (4,2) circle (3pt);
			\node at (4,2.4) {$\Z/5\Z$};	
			\draw (4,0)--(6,0);	
			\draw (6,0)--(6,2);	
			\draw (4,2)--(6,2);
			\draw (4,0)--(4,2);
			\draw (4,0)--(6,2);	
			\draw (4,2)--(6,0);	
			
			\draw[fill=black]  (8,0) circle (3pt);
			\node at (8,-0.4) {$\Z$};
			\draw[fill=black]  (10,0) circle (3pt);
			\node at (10,-0.4) {$\Z/2\Z$};	
			\draw[fill=black]  (10,2) circle (3pt);
			\node at (10,2.4) {$\Z/3\Z$};
			\draw[fill=black]  (8,2) circle (3pt);
			\node at (8,2.4) {$\Z/5\Z$};	
			\draw (8,0)--(10,0);	
			\draw (10,0)--(10,2);	
			\draw (10,2)--(8,2);
			\draw (8,0)--(8,2);
			\draw (8,0)--(10,2);	
			\draw (8,2)--(10,0);	
			
			\draw[fill=black]  (12,0) circle (3pt);
			\node at (12,-0.4) {$\Z/8\Z$};
			\draw[fill=black]  (14,0) circle (3pt);
			\node at (14,-0.4) {$\Z/16\Z$};	
			\draw[fill=black]  (14,2) circle (3pt);
			\node at (14,2.4) {$\Z/16\Z$};
			\draw[fill=black]  (12,2) circle (3pt);
			\node at (12,2.4) {$\Z$};	
			\draw (12,0)--(14,0);	
			\draw (14,0)--(14,2);	
			\draw (14,2)--(12,2);
			\draw (12,0)--(12,2);
			\draw (12,0)--(14,2);
			\draw (12,2)--(14,0);
			
		\end{tikzpicture}
	\end{center}
	\caption{Examples of graph products.}
\end{figure}
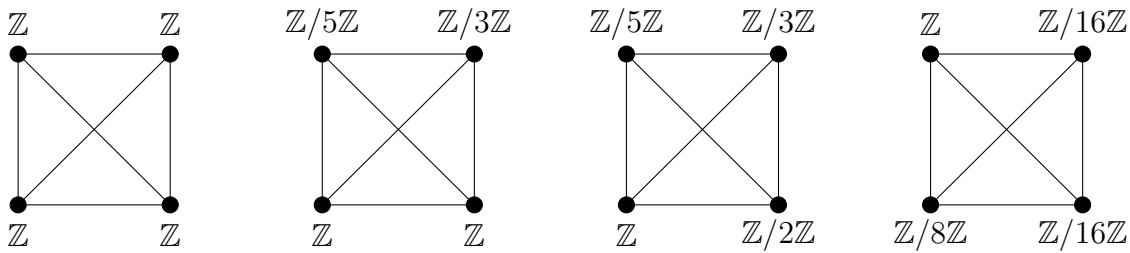

\subsection{The center of the automorphism group of a non-abelian graph product of cyclic groups} Now that we completely understand the abelian case, let us move towards the more general case of a non-abelian graph product. Given a graph product $G_\Gamma$, we have always the epimorphism $\pi=(\pi_1\times\pi_2)\colon\Aut(G_\Gamma)\twoheadrightarrow\Aut(G_\Delta)\times \Aut(G_{\Gamma-\Delta})$ (see \S 3). Since $\pi_1$ and $\pi_2$ are surjective maps and $Z(\Aut(G_{\Gamma-\Delta}))$ is trivial we have  $\pi_1(Z(\Aut(G_\Gamma)))\subseteq Z(\Aut(G_\Delta)))$ and $\pi_2(Z(\Aut(G_\Gamma)))=\left\{id\right\}$. 
\begin{proposition}
\label{Injective}
    Let $\Aut(G_\Gamma)$ be the automorphism group of the graph product $G_\Gamma$ where $V(\Delta)$ and $V(\Gamma)-V(\Delta)$ are both non-empty. Let $k$ denote the maximal order of a vertex in $\Delta$ which is also a power of $2$.
    
    The restriction $\pi_{\mid Z(\Aut(G_\Gamma))}\colon Z(\Aut(G_\Gamma))\to \Aut(G_\Delta)$ is injective if and only if
    \begin{enumerate}
        \item for $v\in V(\Delta)$ we have $ord(v)=\infty$ or $ord(v)=p^n$ for some $n$, $p\neq 2$, or
        \item there are multiple vertices of order $k$ in $\Delta$, or
        \item there is precisely one vertex $v$ of order $k$ in $\Delta$ and either there is no vertex in $\Gamma-\Delta$ such that $\rho_{wv^{ord(v)/2}}$ is well-defined or for all $w_i\in V(\Gamma-\Delta)$ such that $\rho_{w_iv}$ is well-defined which lie in the same orbit of the action of the labeled graph automorphisms on $\Gamma$, also $\rho_{w_iw_j}$ or $\rho_{w_jw_i}$ is well-defined.
    \end{enumerate}
\end{proposition}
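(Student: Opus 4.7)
The plan is to study the subgroup $K := T_Z \cap Z(\Aut(G_\Gamma))$, since by exactness of the splitting sequence of Proposition \ref{graphicallyirreducible} (applied to our blown-up graph whose vertex groups are either infinite cyclic or of prime-power order, hence graphically irreducible), the map $\pi_{\mid Z(\Aut(G_\Gamma))}$ is injective if and only if $K$ is trivial. Any $f \in T_Z$ is completely determined by the family $z_w := w^{-1}f(w) \in G_\Delta$ as $w$ ranges over $V(\Gamma-\Delta)$; since $\Delta$ is a clique, $G_\Delta$ is abelian and commutes with every $w \in V(\Gamma-\Delta)$. The strategy is to show, using the generating set from Theorem \ref{GenSet}, that centrality translates into a system of constraints on the $z_w$, and then to decide in each of (i)--(iii) whether these constraints force $z_w = 1$ for all $w$.

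The first block of constraints comes from local automorphisms and transvections inside $\Delta$. For a local automorphism $v \mapsto v^a$ on $v \in V(\Delta)$, commutation with $f$ forces the $v$-component of each $z_w$ to be fixed by $(\Z/{\rm ord}(v)\Z)^*$; elementary number theory shows this component must vanish unless ${\rm ord}(v) = 2^m$, in which case it must lie in $\{0, 2^{m-1}\}$. This already establishes injectivity in case (i). Next, for two vertices of 2-power order inside $\Delta$ with orders $2^m \le 2^n = k$, a direct computation with the dominated transvections $\rho_{v v'}$ and $\rho_{v' v^{2^{n-m}}}$ shows that the $v'$-component of $z_w$ must vanish whenever $m < n$; moreover if two distinct vertices have the maximal order $k$, the transvection between them forces the $k$-order components to vanish as well. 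This disposes of case (ii) and reduces the remaining analysis to the situation in which there is a unique vertex $v \in V(\Delta)$ of order $k$ and each $z_w$ lies in $\{1, v^{2^{n-1}}\}$.

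The second block of constraints comes from $V(\Gamma-\Delta)$. Since $f \in T_Z$ must actually be a word in the generators of $T_Z$, a non-zero $z_w = v^{2^{n-1}}$ can only occur when the dominated transvection $\rho_{w v^{2^{n-1}}}$ is well-defined, which (using $st(v) = V(\Gamma)$) happens exactly when ${\rm ord}(w) = \infty$ or ${\rm ord}(w)$ is a $2$-power. Commutation with labeled graph automorphisms of $\Gamma$ (which preserve $\Delta$ and fix the unique order-$k$ vertex $v$) forces the value of $z_w$ to depend only on the $\Aut(\Gamma)$-orbit of $w$. Finally, for $w_i, w_j \in V(\Gamma-\Delta)$, a short computation using that $z_{w_i} \in G_\Delta$ commutes with $w_j$ shows that commutation with $\rho_{w_i w_j}$ forces $z_{w_j} = 1$, while commutation with $\rho_{w_j w_i}$ forces $z_{w_i} = 1$. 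Combining these observations gives exactly the condition in (iii) as an obstruction to non-trivial central elements.

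For the converse, assume none of (i)--(iii) holds. Then there is a unique order-$k$ vertex $v \in V(\Delta)$, an orbit $O$ of labeled graph automorphisms containing some $w$ with $\rho_{w v^{2^{n-1}}}$ well-defined, and no dominated transvection between elements of $O$ (in either direction). Setting $z_w := v^{2^{n-1}}$ for $w \in O$ and $z_w := 1$ otherwise defines an element $f \in T_Z$; by the constraint analysis above, $f$ commutes with all local automorphisms, labeled graph automorphisms and dominated transvections of $\Aut(G_\Gamma)$. Commutation with the remaining partial conjugations is a routine verification, using that $z_w \in G_\Delta$ is central in $G_\Gamma$, so conjugating $z_w$ by any element of $G_\Gamma$ leaves it unchanged. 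Thus $f$ is a non-trivial element of $K$. The main obstacle is keeping the case analysis for the forward direction bookkeeping-free and ensuring that the condition ``$\rho_{w_i w_j}$ or $\rho_{w_j w_i}$ is well-defined'' interacts correctly with the orbit structure; the computational heart is the identity $\rho_{w_i w_j} \circ f(w_i) = w_i w_j z_{w_i}$ versus $f \circ \rho_{w_i w_j}(w_i) = w_i w_j z_{w_i} z_{w_j}$, which yields the decisive constraint $z_{w_j}=1$.
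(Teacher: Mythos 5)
Your proposal is correct and follows essentially the same route as the paper: identify the kernel of $\pi$ with $T_Z$, use commutation with local automorphisms/inversion and with dominated transvections inside $\Delta$ to force each $z_w$ into $\{1,v^{k/2}\}$ for the unique maximal $2$-power vertex $v$, use labeled graph automorphisms for orbit-constancy and transvections $\rho_{w_iw_j}$ to kill $z_w$, and build the explicit central element (the paper's $\beta$) for the converse. The only cosmetic differences are that you use arbitrary local automorphisms where the paper uses $\iota$, and your computation with $\rho_{w_iw_j}$ extracts $z_{w_j}=1$ directly rather than via $z_{w_i}=z_{w_j}$.
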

\begin{proof}
    The kernel of $\pi$ is given by $T_Z$, recall that these are automorphisms $f$ such that $f(w)=wz_w$ for every vertex $w\in V(\Gamma-\Delta)$ and $f(v)=v$ for every vertex $v\in \Delta$. Let $\iota\colon G_\Gamma\to G_\Gamma$ denote the automorphism induced by $v\mapsto v^{-1}$ for all $v\in V(\Delta)$ and $w\mapsto w$ for all $V(\Gamma-\Delta)$. We can now calculate: $\iota\circ f(w)=wz_w^{-1}$ and $f\circ\iota (w)=wz_w$, hence $ord(z_w)=2$ or $z_w=1$ if $f\in Z(\Aut(G_\Gamma))$.
    
    We first prove the ``if'' statement.
    
    In case (i) there are no elements of order $2$ in $G_\Delta$, thus $z_w=1$ and hence we obtain the injectivity of $\pi_{\mid Z(\Aut(G_\Gamma))}$.
    
    In the setting of case (ii) suppose that there exists $w\in V(\Gamma-\Delta)$ such that $z_w\neq 1$. Then we can write $z_w=v_1^{l_1}v_2^{l_2}...v_o^{l_o}$ where the order of every $v_i$ is a power of $2$ and $l_1,...,l_o\in \Z-\{0\}$. Moreover if there are multiple vertices of the same order in $V(\Delta)$, then either all of these appear as a factor of $z_w$ or none do, since there exists an automorphism of the labeled graph permuting those vertices. Additionally, only a vertex whose order is the maximal power of 2 appearing in $\Delta$ can show up as a factor here, since else a dominated transvection $\rho_{v_ix}$ exists, $x\in V(\Delta)$, with $ord(x)>ord(v_i)$ and then $\rho_{v_ix}\circ f(w)=wz_wx^{ord(v_i/2)}$ and $f\circ \rho_{v_ix}(w)=wz_w$. However there cannot be multiple factors of the same order either, since the dominated transvection $\rho_{v_1v_2}$ does not commute with $f$ either, by essentially the same argument. Therefore we are done with part (ii).
    
    Finally for part (iii) notice that if no dominated transvection $\rho_{wv^{ord(v)/2}}$ exists, then $f(w)=wv^{ord(v)/2}$ will not be well-defined. For the other part of the statement first note that two vertices $w$ and $w'$ in the same orbit need to satisfy $z_w=z_{w'}$, i.e. $f(w')=w'z_w=w'z_{w'}$. The existence of a dominated transvection $\rho_{ww'}$ now implies that $f$ is not central if $z_w\neq 1$, since $f\circ \rho_{ww'}(w)=ww'z_wz_{w'}=ww'$ and $\rho_{ww'}\circ f (w)=ww'z_w$. This proves the ``if'' statement.
    
    For the other direction we essentially mimic the construction of the automorphism $\alpha$ from the clique case. Let $v\in V(\Delta)$ denote the unique vertex in $\Delta$ whose order is the maximal order of $2$ appearing and $w\in V(\Gamma-\Delta)$ denote a vertex such that $\rho_{wv^{ord(v)/2}}$ is well-defined and neither $\rho_{ww'}$ not $\rho_{w'w}$ is well defined for all vertices $w'$ in the orbit of $w$ under the action of the labeled graph automorphisms. Define $\beta\colon G_\Gamma\to G_\Gamma$ as the automorphism induced by $w'\mapsto w'v^{ord(v)/2}$ for all vertices $w'$ in the orbit of $w$ under the action of the labeled graph automorphisms and $x\mapsto x$ for all other vertices. One can check that this is indeed a central element using the generating set from Theorem \ref{GenSet}.
\end{proof}

Let $W_\Gamma$ be a right-angled Coxeter group. It is known that $\Aut(W_\Delta)\cong\GL_n(\Z/2\Z)$ where $n=|V(\Delta)|$. Hence $Z(\Aut(W_\Delta))$ is trivial. Combining this fact with Proposition \ref{Injective}(ii) implies the following result.
\begin{corollary}
Let $W_\Gamma$ be a right-angled Coxeter group. If $|V(\Delta)|\geq 2$, then $Z(\Aut(W_\Gamma))$ is trivial.
\end{corollary}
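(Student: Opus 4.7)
The plan is to follow the outline sketched in the paragraph immediately preceding the corollary. The key external fact is that $\Aut(W_\Delta) \cong \GL_n(\Z/2\Z)$ where $n = |V(\Delta)|$, together with the elementary observation that the center of $\GL_n(\Z/2\Z)$ is always trivial, since it consists of scalar matrices $\lambda I$ with $\lambda \in (\Z/2\Z)^\times = \{1\}$. I would split the argument into two subcases depending on whether $V(\Gamma) = V(\Delta)$ or not, both of which rely on this fact.

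In the first subcase, $V(\Gamma) = V(\Delta)$, so $\Gamma$ is a clique on $n \geq 2$ vertices and $W_\Gamma \cong (\Z/2\Z)^n$. Then $\Aut(W_\Gamma) \cong \GL_n(\Z/2\Z)$ is itself centerless by the observation above, and we are done in this subcase.

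In the second subcase $V(\Gamma) - V(\Delta) \neq \emptyset$, I would apply Proposition \ref{Injective}(ii). Every vertex of $\Delta$ has order $2$, so the integer $k$ from that proposition equals $2$; the hypothesis $|V(\Delta)| \geq 2$ then provides multiple vertices of order $k$ in $\Delta$, which is precisely what condition (ii) demands. Consequently $\pi_{|Z(\Aut(W_\Gamma))}$ is injective. Because $\pi_1$ sends $Z(\Aut(W_\Gamma))$ into $Z(\Aut(W_\Delta)) = \{1\}$ (again using $n \geq 2$) and $\pi_2$ sends it into $Z(\Aut(W_{\Gamma-\Delta})) = \{id\}$, the injectivity forces $Z(\Aut(W_\Gamma)) = \{id\}$. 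The fact that $W_{\Gamma-\Delta}$ is centerless is an immediate consequence of the definition of $\Delta$: if some $w \in V(\Gamma)-V(\Delta)$ satisfied $st_{\Gamma-\Delta}(w) = V(\Gamma-\Delta)$, then combined with the fact that every $v \in V(\Delta)$ is adjacent to $w$, we would obtain $st_\Gamma(w) = V(\Gamma)$, placing $w$ in $\Delta$, a contradiction.

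I do not expect any real obstacle here: the substantive work is contained in Proposition \ref{Injective}, and the rest is a verification that the hypothesis $|V(\Delta)| \geq 2$ simultaneously delivers both the triviality of $Z(\GL_n(\Z/2\Z))$ and the ``multiple vertices of order $k$'' clause of that proposition. The only point that requires a brief justification rather than a direct citation is that $W_{\Gamma-\Delta}$ is centerless, which as noted above is a one-line consequence of the definition of $\Delta$.
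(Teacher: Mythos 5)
Your proof is correct and follows essentially the same route as the paper: the paper likewise combines the fact that $\Aut(W_\Delta)\cong\GL_n(\Z/2\Z)$ is centerless with Proposition \ref{Injective}(ii) (multiple vertices of order $k=2$ in $\Delta$). Your explicit separation of the clique case is a sensible precaution, since Proposition \ref{Injective} is stated only for $V(\Gamma)-V(\Delta)\neq\emptyset$, and your one-line verification that $W_{\Gamma-\Delta}$ is centerless (hence, by Lemma \ref{center}, $\Aut(W_{\Gamma-\Delta})$ is centerless) matches what the paper records just before that proposition.
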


Now we move on to the case where $\Delta$ has precisely one vertex but there are no further restrictions.

\begin{proposition}
\label{CenterDelta1}
  Let $G_\Gamma$ be a graph product where $\Delta=\left\{v\right\}$ and $V(\Gamma)-V(\Delta)\neq\emptyset$. Let $V(\Gamma)-V(\Delta)=\left\{w_1,\ldots, w_m\right\}$. 
  \begin{enumerate}
      \item If $ord(v)=\infty$, then $Z(\Aut(G_\Gamma))$ is non-trivial if and only if $ord(w_j)<\infty$ for all $j=1,\ldots, m$. In the non-trivial case we have $Z(\Aut(G_\Gamma))=\left\{id, \iota\right\}$.
      \item Assume that $ord(v)=p^k$ and $p\neq 2$.
      \begin{enumerate}
          \item If $ord(w_j)=p_j^{k_j}$ and $p_j\neq p$ for all $j=1,\ldots, m$, then $Z(\Aut(G_\Gamma))\cong (\Z/p^k\Z)^*$.
      \item If there exists $w_i$ with $ord(w_i)=\infty$ or $p^k|ord(w_i)$, then $Z(\Aut(G_\Gamma))$ is trivial.
      \item If there exist vertices $w_1,...,w_m$ such that $ord(w_i)=p^{k_i}$, $k_i<k$, then $Z(\Aut(G_\Gamma))\cong \Z/p^l\Z$ for $l=\min \{p^{k-k_i}| i=1,...,m\}$.
      \end{enumerate}
 
  \item Assume that $ord(v)=2^k$.
  \begin{enumerate}
      \item Assume that $k=1$. Let $l_i$ denote the number of orbits of the action of the graph isometry group on the vertices of order $2^i$. Let $k_i$ denote the number of orbits of vertices of order $2^i$ such that $st(w)\nsubseteq st(w')$ for all vertices $w,w'$ in the orbit. Finally let $m_i$ denote the number of orbits which additionally satisfy $st(x)\nsubseteq st(y)$ for all vertices $x$ in all orbits of order of vertices $<2^i$ and all vertices $y$ in an orbit of vertices of order $2^i.$ Then $Z({\rm Aut}(G_\Gamma)\cong \prod_{i\in\mathbb{N}_{>0}} \left(\Z/2\Z\right)^{m_{i}}$. 
      \item If $ord(v)=2^k$ for some $k>1$, then the center is the direct product of the centers obtained by (ii) (a-c) and (iii) (a).
  \end{enumerate}
   \end{enumerate}
\end{proposition}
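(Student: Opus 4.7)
The plan is to restrict the short exact sequence
$$\{1\}\to T_Z\to\Aut(G_\Gamma)\overset{\pi=(\pi_1\times\pi_2)}{\rightarrow}\Aut(G_\Delta)\times\Aut(G_{\Gamma-\Delta})\to\{1\}$$
to the centre. As noted before Proposition \ref{Injective}, $\pi_2$ vanishes on $Z(\Aut(G_\Gamma))$, so a central automorphism is determined by its image in $\Aut(G_\Delta)$ together with its restriction to $T_Z$. Since $\Delta=\{v\}$, this target is $\{id,\iota\}$ when $ord(v)=\infty$ and $(\Z/p^k\Z)^*$ when $ord(v)=p^k$. The strategy in every case is uniform: first apply Proposition \ref{Injective} to decide whether $\pi_1|_{Z(\Aut(G_\Gamma))}$ is injective and, if not, identify the finite kernel $Z(\Aut(G_\Gamma))\cap T_Z$; then test which elements of $Z(\Aut(G_\Delta))$ lift centrally by commuting against the Corredor--Gutierrez generators from Theorem \ref{GenSet}. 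Labeled graph automorphisms, partial conjugations and local automorphisms on vertex groups other than $\langle v\rangle$ commute automatically with any candidate that is the identity on every $w_j$, so all of the content sits in the dominated transvections between vertices of $\Gamma$.

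Cases (i) and (ii) are then routine. In (i), Proposition \ref{Injective}(i) gives injectivity, so $Z(\Aut(G_\Gamma))\hookrightarrow\{id,\iota\}$, and the computation $\iota\,\rho_{w_iv}(w_i)=w_iv^{-1}\neq w_iv=\rho_{w_iv}\,\iota(w_i)$ shows that $\iota$ lifts centrally if and only if no transvection $\rho_{w_iv}$ is well-defined, which happens exactly when every $w_j$ has finite order. In (ii), $\pi_1|_Z$ is again injective, and the commutation relation for $\psi:v\mapsto v^l$ against a transvection of type $\rho_{w_iv^{p^{\max(0,k-k_i)}}}$ translates into $(l-1)\,p^{\max(0,k-k_i)}\equiv 0\pmod{p^k}$. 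Since $p$ is odd, $(\Z/p^k\Z)^*$ is cyclic, and the congruence conditions cut out the three cyclic subgroups in (a), (b) and (c).

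The main obstacle is case (iii) with $ord(v)=2^k$: $(\Z/2^k\Z)^*$ is no longer cyclic, and the identity $-1\equiv 1\pmod 2$ makes $\pi_1|_Z$ fail to be injective when $k=1$, introducing a new family of central elements in $T_Z$. For (iii)(a) one parameterises $f\in T_Z\cap Z(\Aut(G_\Gamma))$ by $f(w_j)=w_jz_j$ with $z_j\in\langle v\rangle\cong\Z/2\Z$, and collects three obstructions: $z_j$ must be constant on orbits of the labeled graph automorphism action, trivial whenever $ord(w_j)$ is coprime to $2$, and trivial whenever a first-power transvection $\rho_{xw_j}$ is well-defined for some $x\neq w_j$. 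The crucial subtlety is that a transvection $\rho_{xw_j^{2^e}}$ with $e\geq 1$ imposes no constraint because $v^{2^e}=1$; translating the distinction between first- and higher-power transvections into $st$-inclusions among the vertices of $\Gamma-\Delta$ produces the orbit counts $m_i$. Case (iii)(b) follows by combining the image analysis of (ii) for $p=2$ with the kernel analysis of (iii)(a) via the transvection $\rho_{w_iv^{2^{k-1}}}$, and one verifies that the two contributions are independent so that the restricted short exact sequence splits and yields the claimed direct product decomposition.
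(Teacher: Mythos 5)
Your proposal is correct and follows essentially the same route as the paper: restrict the short exact sequence to the centre, use that $\pi_2$ kills the centre, analyse the image in $\Aut(G_\Delta)$ together with the kernel $Z(\Aut(G_\Gamma))\cap T_Z$, and decide everything by commuting candidates against the Corredor--Gutierrez generators, with the first-power versus higher-power dominated transvection dichotomy carrying case (iii) exactly as in the paper. The only cosmetic difference is that in the subcases where all $w_j$ have finite order (the ``if'' direction of (i) and case (ii)(a)) the paper reads the centre off the characteristic decomposition $\Aut(G_\Gamma)\cong\Aut(G_\Delta)\times\Aut(G_{\Gamma-\Delta})$ via Lemmas \ref{InfiniteFiniteCharacteristic} and \ref{CharacteristicBothFactors}, whereas you verify centrality of the lifted local automorphisms by direct commutation.
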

\begin{proof}
With regard to (i):\\
If $ord(v)=\infty$ and $ord(w_j)<\infty$ for all $j=1,\ldots, m$, then the special subgroup $G_{\Gamma-\Delta}$ is characteristic by Lemma \ref{InfiniteFiniteCharacteristic}. Thus,  both factors of $G_\Gamma\cong G_\Delta\times G_{\Gamma-\Delta}$ are characteristic and by Lemma \ref{CharacteristicBothFactors} it follows that
$\Aut(G_\Gamma)\cong\Aut(G_\Delta)\times\Aut(G_{\Gamma-\Delta})$.
For the center of $\Aut(G_\Gamma)$ we have: $$Z(\Aut(G_\Gamma))=Z(\Aut(G_\Delta))\times Z(\Aut(G_{\Gamma-\Delta}))=Z(\Aut(G_\Delta))=\left\{id, \iota\right\}.$$
    
If $ord(v)=\infty$ and there exists a vertex $w_i$ with $ord(w_i)=\infty$, then we first show that a central element $f\in Z(\Aut(G_\Gamma))$  maps $v$ to $v$. By Lemma \ref{center} we know that there exists $z_i\in G_\Delta$ such that $f(w_i)=w_iz_i$. Further we also know that $f(v)=v^{\epsilon}$ where $\epsilon =1$ or $-1$. Since the order of $w_i$ is infinite there exists the dominated transvection $\rho_{w_i v}$. Thus
$$f\circ \rho_{w_i v}(w_i)=f(w_iv)=w_iz_iv^{\epsilon}\text{ and }
 \rho_{w_i v}\circ f(w_i)=\rho_{w_iv}(w_iz_i)=w_ivz_i$$    
 
Thus $\epsilon=1$ and $f(v)=v$.  

Furthermore, for a vertex $w_j$ we know by Lemma \ref{center} that there exists $v^k\in G_\Delta=\langle v\rangle$ such that $f(w_j)=w_jv^k$. We have
$$f\circ \iota (w_i)=f(w_i)=w_iv^{k}\text{ and }\iota\circ f(w_i)=\iota(w_iv^k)=w_iv^{-k}$$

Thus $v^k=v^{-k}$ and therefore $k=0$ which shows that $f=id$.

With regard to (ii) (a):\\
By the same argument as in (i) we know that $G_{\Gamma-\Delta}$ is characteristic. Thus
$$Z(\Aut(G_\Gamma))\cong Z(\Aut(G_\Delta))\times Z(\Aut(G_{\Gamma-\Delta}))=Z(\Aut(\Z/p^k\Z))\cong(\Z/p^k\Z)^*.$$

With regard to (ii) (b):\\
With the same strategy as in (i) it is possible to show that for $f\in Z(\Aut(G_\Gamma))$ we have $f(v)=v$.

For $w_i$ there exits $z_i\in \langle v\rangle$ such that $f(w_i)=w_iz_i$. Our goal is to show that $z_i=1$. If $ord(w_i)=p_i^{k_i}$ and $p_i\neq p$, then $z_i=1$. If $ord(w_i)=\infty$ or $ord(w_i)=p^s$, then let $l\in \Aut(G_\Gamma)$ be the local automorphisms of $G_{w_i}$ that maps $w_i$ to $w_i^{-1}$. We have
$$f\circ l(w_i)=f(w_i^{-1})=w_i^{-1}z_i^{-1}\text{ and }
l\circ f(w_i)=l(w_iz_i)=w_i^{-1}z_i.$$
Thus $z_i=z_i^{-1}$ and therefore $z_i^{2}=1$. Furthermore, $p^d=ord(z_i)\mid 2$. By assumption $p\neq 2$ and therefore $ord(z_i)=1$. Which shows again that $f=id$.

With regard to (ii) (c):\\
Let $l_k$ denote the local automorphism of induced by $v\mapsto v^k$ in $\Z/p^k\Z$. Due to the condition on the orders, dominated transvections are only well defined if they have the form $\rho_{wv^{p^{k-i}}}$ where the order of $w$ is $p^i$. Analogously to before a central element in Aut$(G_\Gamma)$ acts as the identity on $G_{\Gamma- \Delta}$. So to compute the center it is enough to compute when $\rho_{wv^{p^{k-i}}}$ and $l_k$ commute. A quick calculation shows that these commute if and only if $wv^{p^{j-i}}=wv^{kp^{j-i}}$ or in other words $p^{j-1}\equiv kp^{j-i}$ mod $p^j$. The minimal non-trivial solution for $k$ is precisely $p^i+1$ and other solutions are obtained by adding $p$ to $k$, hence one dominated transvection of this form reduces the center to $\Z/p^{j-i}\Z=\langle p^i\rangle \leq \Z/p^j\Z$. Note that the generator $p^i$ corresponds to the local automorphism $l_{p^i+1}$. If there are multiple vertices, then since $p^x\mid p^y$ for all $x<y$ it suffices to consider the vertex group with the largest exponent, hence we are done.

With regard to (iii) (a):\\
Let us first consider the right-angled Coxeter group case.\\
In this case we have $Z({\rm Aut}(G_\Delta))=\{1\}$ and hence the center lies completely in $T_Z$. Clearly every automorphism in $T_Z$ commutes with every inner automorphism and every local automorphism as well. Moreover an element $f$ in $T_Z$ has the form $w\mapsto wv$ and $w'\mapsto w'$ for some partition $V(\Gamma-\Delta)=W\cup W'$ and $w\in W, w'\in W'$. If there exists a graph automorphism $\sigma$ which maps some $w\in W$ to a $w'\in W'$ (or the other way around), then $f$ is not central, since $f$ and $\sigma$ do not commute. Moreover if there exists a dominated transvection $\rho_{xy}$ of two vertices $x,y\in W$, then $f$ is also not central, since it does not commute with $\rho_{xy}$. However $f$ commutes with all other dominated transvections and all other graph automorphisms, hence is central. 

Now we consider the general case. Clearly vertex groups of order $p^l$ for $p\neq 2$ do not play a role, since $w\mapsto wv$ is not well-defined if $o(w)=p^l$ and $p\neq 2$. Moreover we can map every vertex $w$ of order $2^n$ to $wv$ and obtain an element of $T_Z$. Within the collection of vertices in $V(\Gamma-\Delta)$ of the same order the same restrictions as above apply about the existence of graph automorphisms and dominated transvections. However we also need to consider dominated transvections between vertices $w$ and $w'$ in $V(\Gamma-\Delta)$ for $ord(w)\neq ord(w')$ if their respective stars satisfy the necessary condition. Let $ord(w)=2^n>2^m=ord(w')$. Suppose $f\in Z({\rm Aut}(G_\Gamma))$ and $f(w)=wv$, $f(w')=w'v$. Then an easy computation shows that $f$ commutes with $\rho_{ww'^{2^{n-m}}}$ but it does not commute with $\rho_{w'w}$ leading to the additional condition on the stars in the Lemma. However if such a dominated transvection does not exist, then $f$ is central as in the right-angled Coxeter group case.

With regard to (iii) (b):\\
Since the order of $v$ is $2^k$, we now obtain local automorphisms of $G_\Delta$. These are of the form $v\mapsto v^n$ for some $n$. However since the order of $v$ is $2^k$, all local automorphisms are of the form $(v\mapsto v^{-1})^m$ for some exponent $m\in \mathbb{N}$. An easy calculation shows that a dominated transvection $\rho_{wv^l}$ commutes with the local automorphism $i: v\mapsto v^{-1}$ if and only if $o(v^l)\leq 2$. This also shows that such an automorphism commutes with every local automorphism of $\Z/2^k\Z$. Hence applying the exact same arguments as in part (iii) (a) we obtain a factor $\left(\Z/2\Z\right)^{m_{i}}$ in the center. To answer the question whether some local automorphism can be central we refer to part (ii), since these arguments show exactly in which cases a local automorphism can be central. The assumption that $p\neq 2$ was merely used to show that $Z(\Aut(G_\Gamma))\cap T_Z$ is trivial, the other arguments work even if $p=2$. Hence we obtain the desired statement.
\end{proof}
\begin{remark}
If the graph $\Delta$ has more vertices than just one, similar methods can be applied to calculate the center in any given case. More precisely one can determine the center of $\Aut(G_\Delta)$ with Proposition \ref{centerDelta} and then follow the arguments above to determine the center. 
\end{remark}

We summarize the results regarding the center of the automorphism group of a right-angled Coxeter group in the following corollary.

\begin{corollary}
\label{CenterRACG}
	Let $W_\Gamma$ be a right-angled Coxeter group. Let $\Delta$ be the induced subgraph of $\Gamma$ generated by the vertices $v\in V(\Gamma)$ such that $st(v)=V(\Gamma)$. Further, we denote the vertices of $\Delta$ by $V(\Delta)=\left\{v_1,\ldots, v_n\right\}$ and  $V(\Gamma)-V(\Delta)=\left\{w_1,\ldots, w_m\right\}$.  
	 
	 The center $Z(\Aut(W_\Gamma))$ is non-trivial if and only if $n=1$, $m\geq 1$ and there exists a vertex $w_j\in V(\Gamma)-V(\Delta)$ such that $st(w_j)\nsubseteq st(w_i)$ and $st(w_i)\nsubseteq st(w_j)$ for all $i\in\left\{1,\ldots, m\right\}, i\neq j$. Moreover, let $\Omega$ be a subset of $V(\Gamma)-V(\Delta)$ defined as follows: 
	$$\Omega:=\left\{w_j\mid st(w_j)\nsubseteq st(w_i)\text{ and } st(w_i)\nsubseteq st(w_j) \text{ for all }i\in\left\{1,\ldots, m\right\}, i\neq j\right\}.$$ 
	Then $\Omega$ is preserved under the action of ${\rm Isom}(\Gamma-\Delta)$ and $Z(\Aut(W_\Gamma))\cong (\Z/2\Z)^l$ where $l$ is equal to the cardinality of the set of orbits under this action.
\end{corollary}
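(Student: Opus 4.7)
The plan is to perform a case analysis on $n = |V(\Delta)|$ and reduce to the main case $n = 1$, $m \geq 1$, which then follows by applying Proposition \ref{CenterDelta1}(iii)(a) specialised to the right-angled Coxeter setting. First I would dispose of the easier cases: if $n = 0$ then $W_\Gamma$ is centerless, so $\Aut(W_\Gamma)$ is centerless by Lemma \ref{center}; if $n = 1$ and $m = 0$ then $W_\Gamma \cong \Z/2\Z$ and its automorphism group is trivial; and if $n \geq 2$ the corollary following Proposition \ref{Injective} forces the center to be trivial. This leaves the case $n = 1$, $m \geq 1$.

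For this case I would invoke the short exact sequence
$$\left\{1\right\} \to T_Z \to \Aut(W_\Gamma) \overset{\pi}{\rightarrow} \Aut(W_\Delta) \times \Aut(W_{\Gamma-\Delta}) \to \left\{1\right\}$$
from Proposition \ref{graphicallyirreducible}. Since $W_\Delta \cong \Z/2\Z$ one has $\Aut(W_\Delta) = \left\{id\right\}$, and $W_{\Gamma-\Delta}$ is centerless so $\Aut(W_{\Gamma-\Delta})$ is centerless by Lemma \ref{center}; hence $\pi$ sends $Z(\Aut(W_\Gamma))$ into the trivial subgroup and therefore $Z(\Aut(W_\Gamma)) \subseteq T_Z$. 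An element $f \in T_Z$ is completely determined by the set $W := \left\{w \in V(\Gamma-\Delta) \mid f(w) = wv\right\}$, so the problem reduces to identifying which subsets $W$ produce a central element.

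To check centrality I would verify commutation of $f$ with each generator from Theorem \ref{GenSet}. Local automorphisms are trivial in the Coxeter setting, and a direct computation using that $v$ is central with $v^2 = 1$ shows that partial conjugations always commute with elements of $T_Z$. A labeled graph automorphism $\sigma$ necessarily fixes $v$ (since $V(\Delta) = \left\{v\right\}$) and commutes with $f$ iff $W$ is $\sigma$-invariant, so $W$ must be a union of orbits of the ${\rm Isom}(\Gamma-\Delta)$-action. The main and most delicate step is the dominated transvections: $\rho_{wv}$ always commutes with $f$, while for $x, y \in V(\Gamma-\Delta)$ with $st(x) \subseteq st(y)$ and $x \neq y$, the comparison of $f \circ \rho_{xy}(x)$ with $\rho_{xy} \circ f(x)$ yields the algebraic constraint that translates into the incomparability-of-stars condition $W \subseteq \Omega$. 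Together with the ${\rm Isom}(\Gamma-\Delta)$-invariance of $\Omega$, which is a direct consequence of $st(\sigma(w)) = \sigma(st(w))$, this identifies $Z(\Aut(W_\Gamma))$ with the elementary abelian $2$-group freely generated by the orbits of $\Omega$, yielding both the stated isomorphism and the non-triviality criterion. The main obstacle is the systematic bookkeeping in the dominated transvection case: one must compute carefully in $W_\Gamma$ using centrality and order two of $v$, track the four configurations of $x$ and $y$ being in or out of $W$, and correctly translate the resulting algebraic identity into the combinatorial star-incomparability condition.
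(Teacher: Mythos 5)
Your overall strategy---disposing of $n=0$, $n\geq 2$ and $m=0$, then reducing to elements of $T_Z$ for $n=1$, $m\geq 1$ and checking commutation against the generating set of Theorem~\ref{GenSet}---is exactly the paper's route (the paper derives the corollary from Proposition~\ref{CenterDelta1}(iii)(a)). The genuine gap sits in the one step you explicitly postpone, the commutation with dominated transvections, and it is not mere bookkeeping: carried out, the computation does \emph{not} produce the condition $W\subseteq\Omega$. For $x,y\in V(\Gamma)-V(\Delta)$ with $st(x)\subseteq st(y)$ the transvection $\rho_{xy}\colon x\mapsto xy$ is defined, and for $f=f_W\in T_Z$ one has $f(\rho_{xy}(x))=f(x)f(y)$, which equals $xy$ if $x,y$ are both in or both out of $W$ and $xyv$ otherwise, while $\rho_{xy}(f(x))$ equals $xyv$ if $x\in W$ and $xy$ if $x\notin W$. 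Comparing the four configurations, $f_W$ commutes with $\rho_{xy}$ if and only if $y\notin W$ (for $x,y\in W$ one gets $xy$ versus $xyv$; for $x\notin W$, $y\in W$ one gets $xyv$ versus $xy$). So the constraint is the \emph{asymmetric} condition that no element of $W$ is the target of a transvection, i.e.\ $W\subseteq\Omega':=\left\{w_j\mid st(w_i)\nsubseteq st(w_j)\text{ for all }i\neq j\right\}$, which is strictly weaker than membership in $\Omega$.

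The two conditions genuinely differ, so the translation you assert cannot be completed as stated. Take $\Delta=\{v\}$ and let $\Gamma-\Delta$ be the path $a-b-c-d$. Then $st(a)\subseteq st(b)$ and $st(d)\subseteq st(c)$, and no other containments hold, so $\Omega=\emptyset$ and the corollary predicts a trivial center; but $\Omega'=\{a,d\}$ is a single orbit of ${\rm Isom}(\Gamma-\Delta)$, and the automorphism $f$ with $f(a)=av$, $f(d)=dv$, fixing $b,c,v$, commutes with every generator from Theorem~\ref{GenSet}: with partial conjugations and all of $T_Z$ because $v$ is central of order two, with the path reversal because $\{a,d\}$ is an orbit, and with the only transvections $\rho_{ab}$ and $\rho_{dc}$ inside $\Gamma-\Delta$ because their targets $b,c$ lie outside $W=\{a,d\}$. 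Hence $f$ is a non-trivial central element. The same oversight (only the configuration $x,y\in W$ is excluded) occurs in the paper's own proof of Proposition~\ref{CenterDelta1}(iii)(a), on which the corollary rests; your proposal inherits it. At the decisive step you would either have to replace the combinatorial condition by the one-sided one above or produce an argument ruling out elements such as $f$, and I do not see one.
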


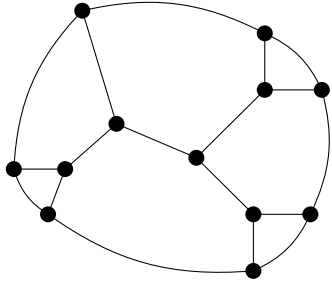
\begin{figure}[h]
	\begin{center}
		\begin{tikzpicture}[scale=1.5]
			\coordinate (A) at (-1,1.3);
			\coordinate (B) at (0.6,1.1);
			\coordinate (C) at (1.1,0.6);
			\coordinate (D) at (-1.15,-0.1);
			\coordinate (E) at (-1.3,-0.5);
			\coordinate (F) at (1,-0.5);
			\coordinate (G) at (0.5,-1);
			\coordinate (H) at (-1.6,-0.1);
			\coordinate (I) at (-0.7,0.3);
			\coordinate (J) at (0.6,0.6);
			\coordinate (K) at (0.5,-0.5);
			\coordinate (L) at (0,0);
			\draw[bend right=20] (A) to (H);
			\draw[bend right=20] (H) to (E);
			\draw[bend right=20] (E) to (G);
			\draw[bend right=20] (G) to (F);
			\draw[bend right=20] (F) to (C);
			\draw[bend right=20] (C) to (B);
			\draw[bend right=20] (B) to (A);
			\draw (A) to (I);
			\draw (I) to (D);
			\draw (D) to (H);
			\draw (D) to (E);
			\draw (I) to (L);
			\draw (L) to (K);
			\draw (K) to (F);
			\draw (K) to (G);
			\draw (J) to (B);
			\draw (J) to (C);
			\draw (J) to (L);
			\fill[color=black] (A) circle (2pt);
			\fill[color=black] (B) circle (2pt);
			\fill[color=black] (C) circle (2pt);
			\fill[color=black] (D) circle (2pt);
			\fill[color=black] (E) circle (2pt);
			\fill[color=black] (F) circle (2pt);
			\fill[color=black] (G) circle (2pt);
			\fill[color=black] (H) circle (2pt);
			\fill[color=black] (I) circle (2pt);
			\fill[color=black] (J) circle (2pt);
			\fill[color=black] (K) circle (2pt);
			\fill[color=black] (L) circle (2pt);
			
		\end{tikzpicture}
		\caption{Frucht graph.}
	\end{center}
\end{figure}
An example of a graph satisfying the condition on the stars is the Frucht graph introduced in \cite{Frucht}, see Figure 7. 

Hence the automorphism group of the right-angled Coxeter group associated to the join of the Frucht graph with a single vertex has non-trivial center. More precisely the isometry group of this graph is trivial, thus the center of the automorphism group of this right-angled Coxeter group is in fact isomorphic to $(\Z/2\Z)^{12}$.

\section{Applications}
In this chapter we provide two applications of our results. The first application is to automatic continuity and the second one is to the stable rank of the reduced group $C^\ast$-algebra. We start this section by discussing the question of automatic continuity and giving some examples and non-examples. We then prove Corollary I, which we show to be ``optimal'' in the case of right-angled Artin groups. Afterwards we move on to the second application and prove Corollary J.

Given a map $\varphi\colon L\to G$ between two topological groups $L$ and $G$, the \emph{automatic continuity problem} is the following: assuming $\varphi$ is a
group homomorphism on the level of groups, find conditions on $L$, $G$ or $\varphi$ which imply
that $\varphi$ is continuous.

Here we focus on the case where $L$ is an arbitrary locally compact Hausdorff group and $G$ has the discrete topology. By definition, a discrete group $G$ is called \emph{lcH-slender} if every algebraic homomorphism $\varphi\colon L\to G$ where $L$ is a locally compact Hausdorff group is continuous. Important examples of lcH-slender groups are right-angled Artin groups \cite{CorsonKazachkov}, \cite{KramerVarghese}, torsion free hyperbolic groups \cite{ConnerCorson}, torsion free CAT(0) groups \cite{CorsonVarghese} and torsion free Helly groups \cite{KeppelerMoellerVarghese}. An algebraic characterization of lcH-slender abelian groups was given in \cite{Corson} and was generalized for  lcH-slender groups in \cite{CorsonVarghese}.

Torsionfreeness is a necessary condition for lcH-slenderness. This follows from the fact, that if $G$ has a torsion element, then $G$ has an element of order $p$ where $p$ is a prime number and we can always define a discontinuous group homomorphism $\varphi\colon\prod_{\mathbb{N}}\Z/p\Z\twoheadrightarrow \Z/p\Z$ using a vector space argument (see the proof of \cite[Thm. C]{Corson}). Thus, if  $G$ is infinite, then this discontinuous group homomorphism can not interact with the entire structure of the group, since its image is finite.  This leads to the following question.
\begin{question}
Under which algebraic conditions on the discrete group $G$ is every algebraic epimorphism $\varphi\colon L\twoheadrightarrow G$ continuous?
\end{question}

The next lemma shows, that if the center of $G$ is not well-behaved in the sense of automatic continuity, then there exists a discontinuous algebraic epimorphism from a locally compact Hausdorff group to $G$.
\begin{lemma}
	\label{discCenter}
	Let $G$ be a group. Assume that there exists a discontinuous group homomorphism $\varphi_1\colon L\to Z(G)$ where $L$ is a locally compact Hausdorff group and $Z(G)$ is the center of $G$. Then the map $\varphi\colon L\times G\to G$ defined as $\varphi(l,g)=\varphi_1(l)\cdot g$ is a discontinuous surjective group homomorphism where $G$ has the discrete topology and $L\times G$ has the product topology.	
\end{lemma}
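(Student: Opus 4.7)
The plan is to verify the three assertions about $\varphi$ separately: that it is a group homomorphism, that it is surjective, and that it is discontinuous. Only the homomorphism property genuinely requires the hypothesis that $\varphi_1$ takes values in the center; surjectivity is immediate from the definition, and discontinuity will follow by pulling $\varphi$ back along a continuous section.

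For the homomorphism property I would simply compute, for $(l_1,g_1),(l_2,g_2)\in L\times G$, that
$$
\varphi\bigl((l_1,g_1)(l_2,g_2)\bigr)=\varphi_1(l_1l_2)\,g_1g_2=\varphi_1(l_1)\varphi_1(l_2)\,g_1g_2,
$$
and then invoke the assumption $\varphi_1(l_2)\in Z(G)$ to swap $\varphi_1(l_2)$ past $g_1$, obtaining $\varphi_1(l_1)g_1\cdot\varphi_1(l_2)g_2=\varphi(l_1,g_1)\varphi(l_2,g_2)$. (Without centrality this interchange would fail, so the hypothesis $\mathrm{im}(\varphi_1)\subseteq Z(G)$ is used in exactly this one place.) Surjectivity is then clear: for any $g\in G$ we have $\varphi(1_L,g)=\varphi_1(1_L)\cdot g=g$.

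For discontinuity I would consider the canonical inclusion $\iota\colon L\to L\times G$ given by $\iota(l)=(l,1_G)$. Because $G$ is discrete, the singleton $\{1_G\}$ is open in $G$, so $\iota$ is a continuous group homomorphism. Composition yields $\varphi\circ\iota(l)=\varphi_1(l)\cdot 1_G=\varphi_1(l)$, i.e.\ $\varphi\circ\iota=\varphi_1$. If $\varphi$ were continuous, then so would be $\varphi\circ\iota=\varphi_1$, contradicting the hypothesis that $\varphi_1$ is discontinuous. Hence $\varphi$ is discontinuous, completing the three claims. There is essentially no obstacle here: the lemma is a routine packaging device, whose purpose is to promote a discontinuous homomorphism into the center to a discontinuous epimorphism onto the entire group, and the only subtlety is remembering to use centrality when checking multiplicativity.
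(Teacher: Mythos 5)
Your proposal is correct and follows essentially the same route as the paper: verify multiplicativity using centrality of $\varphi_1(L)$, observe surjectivity on $\{1_L\}\times G$, and deduce discontinuity from the fact that the restriction of $\varphi$ to $L\times\{1_G\}$ recovers $\varphi_1$. The paper states these three steps tersely; you merely spell out the details (in particular the continuity of the inclusion $l\mapsto(l,1_G)$), which is a harmless elaboration.
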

\begin{proof}
	The map $\varphi$ is a group homomorphism, since $\varphi_1(L)$ is contained in the center of $G$. Furthermore, we have $\varphi(\left\{1\right\}\times G)=G$, hence $\varphi$ is surjective.
	We have $\varphi_{|L}=\varphi_1$ and this map is by assumption discontinuous, therefore $\varphi$ is also discontinuous.
\end{proof}

\begin{corollary}
\label{DisCenter}
	Let $G$ be a group. If the center of $G$ has a non-trivial finite order element, or contains $\mathbb{Q}$ as a subgroup or the $p$-adic integers $\Z_p$ for a prime $p$, then there exists a discontinuous algebraic epimorphism from a locally compact Hausdorff group to $G$.
\end{corollary}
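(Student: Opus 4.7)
The plan is to reduce everything to Lemma \ref{discCenter}: it suffices to produce, in each of the three cases, a locally compact Hausdorff group $L$ together with a discontinuous group homomorphism $\varphi_1 \colon L \to Z(G)$ (where $Z(G)$ carries the discrete topology inherited from $G$). Once we have such a $\varphi_1$, the map $(l,g) \mapsto \varphi_1(l)\cdot g$ produced by Lemma \ref{discCenter} is the desired discontinuous algebraic epimorphism.

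For the finite-order case, I would reuse the vector-space argument already cited in the introduction to this chapter: if $Z(G)$ has an element of order $p$ (for some prime $p$), pick the compact Hausdorff group $L = \prod_{\mathbb{N}} \Z/p\Z$ with the product topology. Viewing $L$ as an $\mathbb{F}_p$-vector space, extend a projection onto one coordinate to a discontinuous $\mathbb{F}_p$-linear surjection $L \twoheadrightarrow \Z/p\Z$ using a Hamel-type basis (continuous linear functionals on $L$ factor through finite products, so any functional not of this form is automatically discontinuous), then compose with the inclusion $\Z/p\Z \hookrightarrow Z(G)$.

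For the $\mathbb{Q}$-subgroup case, take $L = \mathbb{R}$ with its standard topology (locally compact Hausdorff). A Hamel basis of $\mathbb{R}$ over $\mathbb{Q}$ containing $1$ yields a $\mathbb{Q}$-linear projection $\varphi_1 \colon \mathbb{R} \to \mathbb{Q}$ sending $1 \mapsto 1$. Since $\varphi_1$ takes only countably many values but is not identically zero, it cannot be continuous into the discrete group $\mathbb{Q}$; compose with the inclusion $\mathbb{Q} \hookrightarrow Z(G)$. For the $\mathbb{Z}_p$-subgroup case, take $L = \mathbb{Z}_p$ with its natural (compact, hence LCH) $p$-adic topology; the identity map $\mathbb{Z}_p \to \mathbb{Z}_p$ is discontinuous when the target is viewed as discrete, since the singleton $\{0\}$ is open in the discrete topology but not in the $p$-adic topology. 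Compose with the inclusion into $Z(G)$.

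The argument is essentially a dictionary of three standard discontinuity constructions, so no step is a serious obstacle; the only thing requiring care is checking that the hypothesis that $Z(G)$ \emph{contains} $\mathbb{Q}$ or $\mathbb{Z}_p$ as an abstract subgroup is enough (the inclusion need not be continuous for anything), so that the composition of the constructed discontinuous map with the abstract inclusion into $Z(G)$ remains a discontinuous group homomorphism into the discrete group $Z(G)$. Once this is in hand, Lemma \ref{discCenter} finishes the proof.
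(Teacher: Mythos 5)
Your proof is correct, and its skeleton is the same as the paper's: everything is funneled through Lemma \ref{discCenter}, so the whole task reduces to exhibiting a discontinuous homomorphism from some locally compact Hausdorff group into $Z(G)$. Where you differ is in how that homomorphism is obtained. The paper disposes of this in one line by quoting the characterization of lcH-slender groups from \cite{CorsonVarghese} (a group is lcH-slender if and only if it is torsion free and contains neither $\mathbb{Q}$ nor $\Z_p$), whose contrapositive hands them the required discontinuous map. You instead construct explicit witnesses in each of the three cases: the Hamel-basis surjection $\prod_{\mathbb{N}}\Z/p\Z\twoheadrightarrow\Z/p\Z$, a $\mathbb{Q}$-linear projection $\mathbb{R}\to\mathbb{Q}$, and the identity $\Z_p\to\Z_p$ with discrete target. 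These are precisely the standard witnesses for the ``easy'' direction of the cited equivalence, so your version is self-contained where the paper's leans on a reference; the price is length, the gain is that no external theorem is needed. Two small polish points: for the $\mathbb{R}\to\mathbb{Q}$ case the cleanest justification is that $\mathbb{R}$ is connected, so any continuous homomorphism to a discrete group is trivial (your countability phrasing works but is slightly indirect); and your closing observation that post-composing with the abstract, injective inclusion into the discrete group $Z(G)$ preserves discontinuity is exactly the right thing to check and is correct.
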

\begin{proof}
    It was proven in \cite{CorsonVarghese} that a group $H$ is lcH-slender if and only if $H$ is torsion free, contains no $\mathbb{Q}$ and no $\Z_p$ as a subgroup. Hence there exists a discontinuous algebraic homomorphism from a locally compact Hausdorff group to $Z(G)$. Thus, by Lemma \ref{discCenter} there exists a discontinuous epimorphism from a locally compact Hausdorff group to $G$.
\end{proof}
\begin{corollary}
Let $G_\Gamma$ denote a graph product of finitely generated abelian groups. If the center $Z(\Aut(G_\Gamma))$ is non-trivial, then there exist a locally compact group $L$ and a discontinuous surjective homomorphism $\varphi\colon L\twoheadrightarrow \Aut(G_\Gamma)$. 
\end{corollary}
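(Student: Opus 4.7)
The strategy is to combine two previously established results: Lemma \ref{CenterFiniteDelta}, which guarantees that the center of $\Aut(G_\Gamma)$ is finite whenever $G_\Gamma$ is a graph product of cyclic groups, and Corollary \ref{DisCenter}, which produces a discontinuous surjection onto any group whose center contains a non-trivial finite order element.

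First I would reduce to the cyclic case. As explained at the beginning of Chapter 4, any graph product of finitely generated abelian groups is isomorphic to a graph product of cyclic groups (obtained by replacing each vertex $v$ with vertex group $\Z^l\oplus\Z/n_1\Z\oplus\ldots\oplus\Z/n_k\Z$ by a complete subgraph on $l+k$ vertices whose vertex groups are the corresponding cyclic factors). This isomorphism of graph products induces an isomorphism of their automorphism groups, so we may assume $G_\Gamma$ has cyclic vertex groups without affecting either the hypothesis or the conclusion.

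Next, Lemma \ref{CenterFiniteDelta} applies and tells us that $Z(\Aut(G_\Gamma))$ is a finite group. Since by assumption it is non-trivial, Cauchy's theorem (applied to the finite abelian group $Z(\Aut(G_\Gamma))$) gives an element $f \in Z(\Aut(G_\Gamma))$ of prime order $p$. In particular, $Z(\Aut(G_\Gamma))$ contains a non-trivial element of finite order, which is precisely the hypothesis of Corollary \ref{DisCenter}. Applied to $G := \Aut(G_\Gamma)$, that corollary directly yields a locally compact Hausdorff group $L$ and a discontinuous surjective group homomorphism $\varphi\colon L \twoheadrightarrow \Aut(G_\Gamma)$, which is what we wanted.

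There is no substantive obstacle once the reduction to cyclic vertex groups is made; the technical work is already packaged in Corollary \ref{DisCenter} and Lemma \ref{CenterFiniteDelta}. If one prefers an explicit construction rather than an appeal to Corollary \ref{DisCenter}, one can take $L := \left(\prod_{\mathbb{N}}\Z/p\Z\right) \times \Aut(G_\Gamma)$ with the product of the natural compact topology on the first factor and the discrete topology on the second, fix a discontinuous surjection $\psi\colon \prod_{\mathbb{N}}\Z/p\Z \twoheadrightarrow \Z/p\Z$ built by the vector space argument of \cite[Thm.~C]{Corson}, compose with the inclusion $\langle f\rangle \cong \Z/p\Z \hookrightarrow Z(\Aut(G_\Gamma))$ to obtain a discontinuous $\varphi_1\colon \prod_{\mathbb{N}}\Z/p\Z \to Z(\Aut(G_\Gamma))$, and finally apply Lemma \ref{discCenter} verbatim to $\varphi_1$.
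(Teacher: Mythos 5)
Your proposal is correct and follows essentially the same route as the paper, which likewise deduces the corollary by combining Lemma \ref{CenterFiniteDelta} (finiteness of the center) with Corollary \ref{DisCenter}. Your explicit mention of the reduction to cyclic vertex groups and the appeal to Cauchy's theorem are harmless extra detail (any non-trivial element of a finite group already has finite order), but the argument is the same.
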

\begin{proof}
    This follows from the previous corollary, since the center of the automorphism group is finite by Lemma \ref{CenterFiniteDelta}.
\end{proof}

The question above regarding automatic continuity of epimorphisms from a locally compact Hausdorff group to a given discrete group was partially answered in \cite{KeppelerMoellerVarghese}. Here we recall a weaker version of this result which is suitable for our investigations. 
\begin{theorem}(\cite{KeppelerMoellerVarghese})
\label{autTheorem}
Let $\varphi\colon L\twoheadrightarrow G$ be an epimorphism from a locally compact Hausdorff group $L$ to a discrete group $G$. If every torsion subgroup of $G$ is finite, $G$ does not contain $\mathbb{Q}$ or the $p$-adic integers $\Z_p$ as a subgroup and $G$ does not have non-trivial finite normal subgroups, then $\varphi$ is continuous.
\end{theorem}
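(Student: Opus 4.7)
The plan is to show $\ker(\varphi)$ is open in $L$, which is equivalent to continuity since $G$ is discrete. First I would reduce to the case where $L$ is compactly generated, using that continuity is a local property and that every locally compact Hausdorff group contains an open, compactly generated subgroup. Then, by the Gleason--Yamabe solution to Hilbert's fifth problem, such an $L$ admits a compact normal subgroup $K \trianglelefteq L$ with $L/K$ a Lie group, and the argument splits into two tasks: showing $\varphi(K) = \{1\}$, and showing that the induced map from $L/K$ is trivial on its identity component.

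For the first task, the heart of the matter is a compact-group lemma: any abstract homomorphism $\psi \colon C \to G$ from a compact group $C$ into a group $G$ satisfying the three hypotheses has finite image. The strategy is to use the exact sequence $1 \to C_0 \to C \to C/C_0 \to 1$ (with $C_0$ compact connected and $C/C_0$ profinite) together with the structure of compact monothetic groups: an infinite compact monothetic group is either a quotient of $\widehat{\Z}$, in which case it abstractly contains $\Z_p$ for some prime $p$, or it contains the divisible compact abelian group $\mathbb{T}$. Divisible subgroups of $G$ must be trivial, because by the classification of divisible abelian groups such a subgroup is a direct sum of copies of $\mathbb{Q}$ and Pr\"ufer groups $\Z(p^\infty)$, both of which are ruled out by the hypotheses ``no $\mathbb{Q}$'' and ``every torsion subgroup finite''; and $\Z_p$ is forbidden outright. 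Hence $\psi(C)$ must be a torsion subgroup of $G$, which is finite by hypothesis. Applying this to $K$ gives that $\varphi(K)$ is finite; since $K$ is normal in $L$ and $\varphi$ is surjective, $\varphi(K)$ is a finite normal subgroup of $G$, and the third hypothesis forces $\varphi(K) = \{1\}$.

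For the second task, $\varphi$ now factors as $\overline{\varphi} \colon L/K \to G$ through the Lie group $L/K$. Let $H := (L/K)_0$ be the identity component; since a Lie group is locally connected, $H$ is open in $L/K$, so it suffices to show $\overline{\varphi}(H) = \{1\}$, as this will make the preimage of $H$ in $L$ an open subgroup of $\ker(\varphi)$. As a connected Lie group, $H$ is generated as an abstract group by its one-parameter subgroups, that is, by the images of continuous homomorphisms $\gamma \colon \mathbb{R} \to H$. For each such $\gamma$ the composition $\overline{\varphi} \circ \gamma \colon \mathbb{R} \to G$ has divisible image, and the same divisibility argument as above forces this image to be trivial. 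Hence $\overline{\varphi}$ kills every one-parameter subgroup and therefore all of $H$.

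I expect the compact-group lemma in the first task to be the main obstacle. The surrounding structure (reduction to compactly generated $L$, the Gleason--Yamabe approximation, and the divisibility argument on one-parameter subgroups) is a clean assembly once that lemma is in hand, but bounding the abstract image of a compact group requires fairly delicate input from the theory of compact groups---Peter--Weyl for the connected part and the abstract subgroup structure of profinite monothetic groups for the totally disconnected part---and it is precisely here that all three hypotheses on $G$ are brought to bear simultaneously.
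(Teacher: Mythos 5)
First, a remark on scope: the paper does not prove this statement at all --- it is imported verbatim (in weakened form) from \cite{KeppelerMoellerVarghese}, so there is no in-paper proof to compare against. Judged on its own merits, your outline follows the broad strategy of that reference (Gleason--Yamabe plus a compact-group lemma plus divisibility), but it has two genuine gaps. The first and most serious is the normality issue created by your opening reduction. Passing to an open compactly generated subgroup $U\leq L$ destroys surjectivity: $\varphi|_U$ maps onto $\varphi(U)$, not onto $G$. (Separately, compact generation is not enough for the Gleason--Yamabe step as you state it --- ${\rm PSL}_2(\mathbb{Q}_p)$ is compactly generated, has no non-trivial compact normal subgroups, and is not a Lie group; one needs an \emph{almost connected} open subgroup, obtained via van Dantzig.) Consequently the compact subgroup $K$ you produce is normal only in $U$, and $\varphi(K)$ is a finite subgroup normal only in $\varphi(U)\leq G$. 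Your sentence ``since $K$ is normal in $L$ and $\varphi$ is surjective, $\varphi(K)$ is a finite normal subgroup of $G$'' is therefore unjustified, and the hypothesis that $G$ has no non-trivial finite normal subgroups cannot be applied to kill $\varphi(K)$. You do still obtain an open subgroup $W\leq L$ with $\varphi(W)$ finite, but a finite-index subgroup of an open subgroup need not be open (think of a discontinuous finite quotient of a compact group), so this does not yield continuity. Arranging that the finite image is normal in all of $\varphi(L)=G$ is precisely the point of the two-step structure of \cite{KeppelerMoellerVarghese}: their Theorem A produces, from the torsion/$\mathbb{Q}$/$\Z_p$ hypotheses alone, an open subgroup whose image is finite \emph{and normal in the full image}, and only then does the no-finite-normal-subgroups hypothesis enter.

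The second gap is in the compact-group lemma, which you correctly identify as the heart of the matter but whose key dichotomy is false as stated. An infinite totally disconnected compact monothetic group is a quotient of $\widehat{\Z}$, i.e.\ of the form $\prod_p \Z/p^{k_p}\Z$, and it need not contain $\Z_p$ for any $p$: for instance $\prod_p \Z/p\Z$ contains no copy of $\Z_p$ (any element divisible by every prime $q\neq p$ already lies in the $p$-th factor) and certainly no copy of $\mathbb{T}$. The missing case --- infinitely many finite local factors --- is exactly the one that must be handled by the ``torsion subgroups are finite'' hypothesis, and it cannot be handled naively: the homomorphism need not be injective on the infinite torsion subgroup $\bigoplus_p \Z/p^{k_p}\Z$ (compare the discontinuous surjection $\prod_{\mathbb{N}}\Z/2\Z\twoheadrightarrow\Z/2\Z$ the paper itself invokes), so one cannot conclude that the image contains an infinite torsion group. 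Closing this case requires the cardinality/Baire-type arguments of the cited work. The divisibility part of your argument is fine for the abelian subgroups you actually apply it to (images of $\mathbb{T}$ and of one-parameter subgroups), but as written the lemma does not go through.
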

It is known that for a given finite group $G$ there exists always a discontinuous surjective group homomorphism from the compact group $\prod_{\mathbb{N}}G$ into $G$, see \cite[Example 4.2.12]{Ribes}.
In general, it may be difficult to find a discontinuous surjective group homomorphism. Nevertheless we conjecture the following result.
\begin{conjecture}
Let $G$ be a group. Assume that every torsion subgroup in $G$ is finite, $G$ does not contain $\mathbb{Q}$ or the $p$-adic integers $\Z_p$ as a subgroup for any prime $p$. The following statements are equivalent:
\begin{enumerate}
    \item Every epimorphism from a locally compact Hausdorff group $L$ to $G$ is continuous.
    \item The group $G$ does not have non-trivial finite normal subgroups.
\end{enumerate}
\end{conjecture}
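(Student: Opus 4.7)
The plan is to prove the two implications separately. For $(2) \Rightarrow (1)$, apply Theorem~\ref{autTheorem} directly: the three standing hypotheses on $G$, together with the absence of non-trivial finite normal subgroups, are precisely the hypotheses of that theorem. This direction requires no additional work.

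The substantive direction is $(1) \Rightarrow (2)$, which I would prove by contrapositive. Assume $N \trianglelefteq G$ is a non-trivial finite normal subgroup; I will construct a locally compact Hausdorff group $L$ and a discontinuous surjective homomorphism $\varphi \colon L \twoheadrightarrow G$. Set $L := N^{\mathbb{N}} \rtimes G$, where $G$ acts on $N^{\mathbb{N}}$ diagonally by conjugation, $g \cdot (n_i)_i := (g n_i g^{-1})_i$. Because $N$ is finite, $N^{\mathbb{N}}$ is compact Hausdorff in the product topology, and equipping $G$ with the discrete topology makes the coordinate-wise action continuous, so $L$ is a locally compact Hausdorff topological group. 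Note that none of the standing hypotheses on $G$ are needed for this construction.

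The heart of the argument is an ultralimit. Fix a non-principal ultrafilter $\mathcal{U}$ on $\mathbb{N}$ and define $f \colon N^{\mathbb{N}} \to N$ by $f((n_i)) := \lim_{\mathcal{U}} n_i$, the unique $n \in N$ with $\{i \in \mathbb{N} \mid n_i = n\} \in \mathcal{U}$; this is well-defined since $N$ is finite. Using that $\mathcal{U}$ is closed under finite intersections, a routine check shows $f$ is a group homomorphism; it is surjective via constant sequences; and it is discontinuous because any basic cylinder neighborhood of the constant sequence $(e, e, \ldots)$ contains sequences whose $e$-support is finite and therefore not in $\mathcal{U}$, so $\ker(f)$ fails to be open. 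Crucially, because the action on $N^{\mathbb{N}}$ is coordinate-wise, $f$ is $G$-equivariant in the sense that $f(g \cdot (n_i)) = g\, f((n_i))\, g^{-1}$.

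Finally, define $\varphi \colon L \to G$ by $\varphi((n_i), g) := f((n_i)) \cdot g$. A direct computation using the homomorphism property and the $G$-equivariance of $f$ verifies that $\varphi$ is a group homomorphism; it is surjective because $\varphi((e, e, \ldots), g) = g$ for every $g \in G$; and it is discontinuous because its restriction to the compact subgroup $N^{\mathbb{N}} \times \{1\}$ coincides with $f$. Thus $\varphi$ witnesses the failure of (1), completing the contrapositive. The main obstacle is ensuring that the discontinuous homomorphism $f$ is compatible with the conjugation action of $G$ so that $\varphi$ is genuinely a homomorphism on the semidirect product; the ultralimit construction achieves equivariance for free, whereas the Hamel-basis style constructions of discontinuous epimorphisms $N^{\mathbb{N}} \to N$ used in the abelian case earlier in the paper (cf.\ the proof of Corollary~\ref{DisCenter}) are not manifestly equivariant, which is where the technical novelty lies.
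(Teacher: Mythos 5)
Your argument appears to be correct, but you should know that the paper does not actually prove this statement: it is stated there as an open \emph{conjecture}, and the authors only record partial progress. The implication (ii)$\Rightarrow$(i) is, as you say, exactly Theorem \ref{autTheorem}. For (i)$\Rightarrow$(ii) the authors start from a non-trivial finite normal subgroup $N\trianglelefteq G$ and rely on the direct-product construction of Lemma \ref{discCenter}, whose formula $(l,g)\mapsto\varphi_1(l)\cdot g$ is a homomorphism only when the image of $\varphi_1$ commutes with $g$; this forces them to replace $G$ by $Z_G(N)$ and yields a discontinuous epimorphism only onto the finite-index subgroup $\langle N, Z_G(N)\rangle$ of $G$, which is precisely why the statement is left as a conjecture.

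Your semidirect-product construction removes that obstruction, and I could not find a gap. The ultrafilter limit $f\colon N^{\mathbb{N}}\to N$ is a surjective homomorphism (the sets $\{i\mid n_i=n\}$ and $\{i\mid m_i=m\}$ lie in $\mathcal{U}$, hence so does their intersection, on which $n_im_i=nm$); its kernel is not open because a basic identity neighbourhood constrains only finitely many coordinates and finite sets do not belong to a non-principal ultrafilter; $f$ intertwines any automorphism of $N$ applied coordinatewise, in particular conjugation by $g\in G$; and writing $\alpha_g$ for coordinatewise conjugation, the semidirect multiplication $(a,g)(b,h)=(a\,\alpha_g(b),gh)$ gives $\varphi\bigl((a,g)(b,h)\bigr)=f(a)\,gf(b)g^{-1}\,gh=\varphi(a,g)\varphi(b,h)$, so $\varphi$ is a genuine homomorphism, surjective via $(e,g)\mapsto g$ and discontinuous because its restriction to the compact open subgroup $N^{\mathbb{N}}\times\{1\}$ is $f$. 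It is also consistent that none of the standing hypotheses on $G$ enter this direction; they are needed only for (ii)$\Rightarrow$(i). Since this would settle a question the authors explicitly leave open, the argument merits an extra round of scrutiny and, if it survives, should be communicated to them; but as written I see no error.
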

Corollary \ref{DisCenter} implies that if $G$ has a non-trivial finite normal subgroup $N$ such that $N\subseteq Z(G)$, then there exists a discontinuous epimorphism from a locally compact Hausdorff group to $G$. Further, if $G$ has a non-trivial finite normal subgroup $N$ such that $G\cong N\times M$, then there exists a discontinuous epimorphism $\varphi\colon\prod_{\mathbb{N}}N\times M\twoheadrightarrow G$. Moreover, the centralizer of a non-trivial finite normal subgroup $N\trianglelefteq G$, defined by $Z_G(N)=\left\{g\in G\mid gn=ng \text{ for all }n\in N\right\}$ has finite index in $G$, therefore the same construction as in Lemma \ref{discCenter} shows that there exists a surjective, discontinuous group homomorphism $\varphi\colon \prod_\mathbb{N} N\times Z_G(N)\twoheadrightarrow \langle N, Z_G(N)\rangle$ where the subgroup $\langle Z_G(N), N\rangle$ has finite index in $G$. Thus, if $G$ has a non-trivial finite normal subgroup, then there exists a surjective, discontinuous group homomorphism from a locally compact Hausdorff group to a finite index subgroup of $G$.

Recall, a group $G$ is by definition \emph{complete} if $G$ is centerless and every automorphism of $G$ is inner. For example, the group ${\rm Sym}(n)$ is complete for $n\neq 2,6$. The notion of completeness goes back to  H\"older \cite{Hoelder}, where he studied decompositions of a group $G$. More precisely, given a group $G$ and a fixed normal subgroup $N$ of $G$ one can ask the question how the group $N$ is involved in the decompositions of a group $G$ in smaller pieces where one piece is equal to the group $N$. H\"older proved that if $N$ is  complete, then any short exact sequence 
$$\left\{1\right\}\to N\to G\to M\to\left\{1\right\}$$
splits and $G$ is isomorphic to the direct product $N\times M$. Hence, if $G$ has a non-trivial complete finite normal subgroup, then there exists a discontinuous epimorphism from a locally compact Hausdorff group to $G$. 

Given a graph product $G_\Gamma$ of cyclic groups, then Corollary \ref{TorsionFinite} implies that torsion subgroups in $\Aut(G_\Gamma)$ are finite. Moreover, by Lemma \ref{noQ} we also know that $\mathbb{Q}$ is not a subgroup of $\Aut(G_\Gamma)$. Additionally, $\Aut(G_\Gamma)$ is finitely generated by Theorem \ref{GenSet}, hence countable and therefore can not contain the uncountable group $\Z_p$ as a subgroup. Hence, Theorem \ref{autTheorem} immediately implies the following result.
\begin{corollary}
\label{automaticAutG}
Let $\varphi\colon L\twoheadrightarrow{\rm Aut}(G_\Gamma)$ be an abstract group epimorphism from a locally compact Hausdorff group $L$ to the automorphism group of a graph product of cyclic groups $G_\Gamma$. If $\Aut(G_\Gamma)$ does not have non-trivial finite normal subgroups, then $\varphi$ is continuous. 
\end{corollary}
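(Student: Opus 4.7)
My plan is to verify that the three algebraic hypotheses of Theorem \ref{autTheorem} are all satisfied by $\Aut(G_\Gamma)$, so that the conclusion follows immediately by applying that theorem to the given epimorphism $\varphi$. Since being free of non-trivial finite normal subgroups is assumed in the hypothesis of the corollary, I only need to check the remaining three conditions: that every torsion subgroup is finite, that $\mathbb{Q}$ does not embed, and that the $p$-adic integers $\Z_p$ do not embed for any prime $p$.

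First I would invoke Corollary \ref{TorsionFinite}, which directly states that every torsion subgroup of $\Aut(G_\Gamma)$ is finite. Second, I would cite Lemma \ref{noQ}, which gives that $\Aut(G_\Gamma)$ contains no copy of $\mathbb{Q}$. Third, for the non-existence of a $\Z_p$-subgroup I would use Theorem \ref{GenSet}, which tells us that $\Aut(G_\Gamma)$ is finitely generated, hence countable; since $\Z_p$ is uncountable, it cannot be embedded as a subgroup of $\Aut(G_\Gamma)$.

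With these three facts together with the hypothesis that $\Aut(G_\Gamma)$ has no non-trivial finite normal subgroup, all four assumptions of Theorem \ref{autTheorem} hold for $G = \Aut(G_\Gamma)$. Applying that theorem to $\varphi\colon L \twoheadrightarrow \Aut(G_\Gamma)$ then yields continuity of $\varphi$. There is no real obstacle here; the work lies entirely in the earlier preparatory results (Corollary \ref{TorsionFinite}, Lemma \ref{noQ}, Theorem \ref{GenSet}, and Theorem \ref{autTheorem}), so the proof is essentially an assembly statement. If anything, the only subtle point to mention explicitly is the countability argument for ruling out $\Z_p$, since the other two conditions have dedicated lemmas.
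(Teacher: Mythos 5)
Your proof is correct and follows exactly the paper's own argument: the authors also combine Corollary \ref{TorsionFinite}, Lemma \ref{noQ}, and the countability of $\Aut(G_\Gamma)$ coming from finite generation (Theorem \ref{GenSet}) to rule out $\Z_p$, and then apply Theorem \ref{autTheorem}. No differences to report.
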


In the special case where $G_\Gamma$ is a right-angled Artin group we obtain a precise characterization of automatic continuity.
\begin{corollary}
\label{automaticRAAG}
	Let $\varphi\colon L\twoheadrightarrow{\rm Aut}(A_\Gamma)$ be an abstract group epimorphism from a locally compact Hausdorff group $L$ into the automorphism group of a right angled Artin group $A_\Gamma$. 
	\begin{enumerate}
		\item If $\Gamma$ is not a clique, then $\varphi$ is continuous.
		\item If $\Gamma$ is a clique, then 
		$$\pi\circ\varphi\colon L\twoheadrightarrow{\rm Aut}(A_\Gamma)\cong {\rm GL}_n(\mathbb{Z})\twoheadrightarrow{\rm PGL}_n(\mathbb{Z})$$
		is continuous, where $\pi$ is the canonical projection.
	\end{enumerate}
Moreover, there exists a discontinuous epimorphism $\varphi\colon\prod_{\mathbb{N}}\Z/2\Z\times {\rm GL}_n(\Z)\twoheadrightarrow{\rm GL}_n(\Z).$
\end{corollary}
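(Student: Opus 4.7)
The plan splits into three parts mirroring the three assertions, with part (i) being a direct corollary of previously established material and parts (ii) and (iii) requiring the center $\{I,-I\}$ of ${\rm GL}_n(\mathbb{Z})$ to be handled carefully.

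For part (i), if $\Gamma$ is not a clique, then Corollary C tells us $\Aut(A_\Gamma)$ has no non-trivial finite normal subgroup, and the hypotheses on torsion subgroups (Corollary \ref{TorsionFinite}) and absence of $\mathbb{Q}$ (Lemma \ref{noQ}) and $\mathbb{Z}_p$ (countability via Theorem \ref{GenSet}) are already in place, so Corollary \ref{automaticAutG} yields the continuity of $\varphi$ immediately.

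For part (ii), where $\Gamma$ is a clique on $n$ vertices and $\Aut(A_\Gamma)\cong {\rm GL}_n(\mathbb{Z})$, the kernel $\{I,-I\}$ of $\pi$ is a non-trivial finite central subgroup, which is precisely the obstruction to applying Corollary \ref{automaticAutG} directly. My approach is to apply Theorem \ref{autTheorem} to the composition $\pi\circ\varphi\colon L\twoheadrightarrow {\rm PGL}_n(\mathbb{Z})$ instead. For this I would first check that ${\rm PGL}_n(\mathbb{Z})$ has no non-trivial finite normal subgroup: by the correspondence theorem any such lifts to a normal subgroup of ${\rm GL}_n(\mathbb{Z})$ properly containing $\{I,-I\}$, which Lemma \ref{GLnoF2} forces to be full-sized; since $F_2$ is torsion-free it intersects the finite kernel $\{I,-I\}$ trivially and injects into the quotient, so the lifted subgroup is itself full-sized and in particular infinite. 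Then I would verify the remaining hypotheses: torsion subgroups of ${\rm PGL}_n(\mathbb{Z})$ are finite because it is linear and admits a torsion-free subgroup of finite index, while absence of $\mathbb{Q}$ and $\mathbb{Z}_p$ subgroups follows from residual finiteness and countability respectively.

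For part (iii), I would apply Lemma \ref{discCenter} to the central subgroup $\{I,-I\}\subseteq Z({\rm GL}_n(\mathbb{Z}))$. Taking for $\varphi_1$ the standard discontinuous epimorphism $\prod_{\mathbb{N}}\mathbb{Z}/2\mathbb{Z}\twoheadrightarrow \mathbb{Z}/2\mathbb{Z}\cong \{I,-I\}$ (referenced in the introduction of this chapter as \cite[Example 4.2.12]{Ribes}) produces the required discontinuous surjection $\prod_\mathbb{N}\mathbb{Z}/2\mathbb{Z}\times {\rm GL}_n(\mathbb{Z})\twoheadrightarrow {\rm GL}_n(\mathbb{Z})$, exactly as claimed. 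The main obstacle is the transfer step in part (ii): the three hypotheses of Theorem \ref{autTheorem} are not preserved under arbitrary quotients, but because the kernel $\{I,-I\}$ is finite and central, each property passes down, either directly by a lifting argument or by viewing ${\rm PGL}_n(\mathbb{Z})$ as a linear group in its own right.
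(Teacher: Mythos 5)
Your proposal is correct and follows essentially the same route as the paper: part (i) via Corollary C and Corollary \ref{automaticAutG}, part (ii) by applying Theorem \ref{autTheorem} to ${\rm PGL}_n(\mathbb{Z})$, and part (iii) via Lemma \ref{discCenter} applied to the central subgroup $\left\{I,-I\right\}$ with the standard discontinuous map $\prod_{\mathbb{N}}\Z/2\Z\twoheadrightarrow\Z/2\Z$. The only difference is that you spell out the verification that ${\rm PGL}_n(\mathbb{Z})$ has no non-trivial finite normal subgroups (via the correspondence theorem and Lemma \ref{GLnoF2}) and satisfies the remaining hypotheses of Theorem \ref{autTheorem}, which the paper merely asserts.
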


\begin{proof}
    The first result follows immediately from Corollary \ref{automaticAutG} and Corollary C. Further, the group ${\rm PGL}_n(\Z)$ does not have finite normal subgroups and also satisfies the properties of Theorem \ref{autTheorem}, therefore every algebraic group epimorphism from a locally compact Hausdorff group $L$ to ${\rm PGL}_n(\Z)$ is continuous. 
    
    Now, we give a discontinuous epimorphism from a locally compact Hausdorff group to $\GL_n(\Z)$.
	The center of ${\rm GL}_n(\Z)$ is equal to $\left\{I, -I\right\}$. Thus there exists a discontinuous group homomorphism 
	$\varphi_1\colon\prod_{\mathbb{N}}\Z/2\Z\to \Z/2\Z\to \left\{I,-I\right\}$, for explicit definition of this group homomorphism see \cite{MoellerVarghese}.
	By Lemma \ref{discCenter}, the group homomorphism $\varphi\colon\prod_{\mathbb{N}}\Z/2\Z\times {\rm GL}_n(\Z)\twoheadrightarrow{\rm GL}_n(\Z)$ where $\varphi(l,g)=\varphi_1(l)\cdot g$ is a discontinuous surjective group homomorphism.
\end{proof}

For the second part of this chapter we deal with reduced group $C^\ast$-algebras. Associated to a group $G$ there exists an associated reduced group $C^\ast$-algebra which we denote by $C_r^\ast(G)$. For the precise definition and more information see for example \cite{GerasimovaOsin}. We say $C_r^\ast(G)$ \textit{has stable rank} $1$ if the group of invertible elements is dense in $C_r^\ast(G)$. Recall that a group $G$ is called \textit{acylindrically hyperbolic} if it acts acylindrically on a Gromov-hyperbolic space. A good reference for reduced group $C^\ast$-algebras of acylindrically hyperbolic groups is \cite{GerasimovaOsin} and a good reference for acylindrical hyperbolicity of automorphism groups of graph products is \cite{Genevois}. We don't want to go into more detail here and instead move on to the proof of Corollary J. 
\begin{proof}[Proof of Corollary J]
    First we invoke \cite[Thm. 1.1]{Genevois} to see that $\Aut(G_\Gamma)$ is an acylindrically hyperbolic group. Then \cite[Thm 1.1]{GerasimovaOsin} implies that the stable rank of $C^\ast_r(\Aut(G_\Gamma))$ is $1$ if the amenable radical is trivial, which is in particular the case if there are no non-trivial finite normal subgroups. Proposition E and Theorem G imply that, in the setting of Corollary J, $\Aut(G_\Gamma)$ does not have non-trivial finite normal subgroup, which proves Corollary J.
\end{proof}
Contrary to the automatic continuity situation we do not know whether this result is ``optimal'' even for right-angled Artin groups. In fact we believe even the ``easiest'' case to be open, which is the following question.
\begin{question}
Let $n\geq 2$, is the stable rank of $C_r^\ast(\GL_n(\Z))$ equal to 1?
\end{question}

\end{document}